\documentclass[11pt,a4paper]{article}

%\usepackage[runin]{abstract} %将摘要标题添加到摘要正文段前
%\setlength{\abstitleskip}{-2em} %取消摘要标题缩进
%\abslabeldelim{.} %在摘要标题后添加符号
% \setlength{\absleftindent}{0pt} %取消摘要左端缩进
% \setlength{\absrightindent}{0pt}%取消摘要右端缩进
\usepackage{amsmath, amsfonts, amssymb, amsthm}
\usepackage{bm}
\usepackage{autobreak} %长公式自动换行

\usepackage{ulem}
\usepackage{mathrsfs}
\usepackage{geometry}
\usepackage{galois}%复合映射
\usepackage{graphicx}
\usepackage[backref]{hyperref}
\usepackage{authblk}

\usepackage{soul}
\usepackage{color, xcolor}

\usepackage{comment}

\numberwithin{equation}{section} %公式按章节编号
\newtheorem{thm}{Theorem}[section] %定理按章节编号
\newtheorem{Def}[thm]{Definition}
\newtheorem{prop}[thm]{Proposition}
\newtheorem{lemma}[thm]{Lemma}
\newtheorem{coro}[thm]{Corollary}

\newtheorem{assum}[thm]{Assumption}
\newtheorem{remark}[thm]{Remark}

\newcommand\VV{H_0^1(\Omega)}
\newcommand\HH{L^2(\Omega)}
\newcommand\effi{(\|\nabla u\|^2+\|u_t\|^2)}
\newcommand\effip{(\|\nabla u\|^p+\|u_t\|^p)}
\newcommand\effit{(\|\nabla u(t)\|^2+\|u_t(t)\|^2)}
\newcommand\effitp{(\|\nabla u(t)\|^p+\|u_t(t)\|^p)}

\newcommand\effitaup{(\|\nabla u(\tau)\|^p+\|u_t(\tau)\|^p)}
\newcommand\intt{\int_0^t}
\newcommand\into{\int_{\Omega}}
\newcommand\intto{\int_0^t\int_{\Omega}}
\newcommand\trm[1]{\text{\rm #1}}

\title{Finite-dimensionality of attractors for wave equations with degenerate nonlocal damping}
\author{Zhijun Tang, Senlin Yan, Yao Xu\thanks{Corresponding author.} ~and Chengkui Zhong} % 使用\thanks 定义通讯作者
%\affil{Department of Mathematics, Nanjing University, China}
%\renewcommand*{\Affilfont}{\small\it} % 修改机构名称的字体与大小
%\renewcommand\Authands{ and } % 去掉 and 前的逗号
\date{}

\begin{document}
    
    \maketitle

    \begin{abstract}
      In this paper we study the fractal dimension of global attractors for a class of wave equations with (single-point) degenerate nonlocal damping. Both the equation and its linearization degenerate into linear wave equations at the degenerate point and the usual approaches to bound the dimension of the entirety of attractors do not work directly. Instead, we develop a new process concerning the dimension near the degenerate point individually and show the finite dimensionality of the attractor.\\

\noindent \textbf{Keywords}: wave equations, degenerate nonlocal damping, global attractors, fractal dimension.
    \end{abstract}

    \section{Introduction}\label{sec_introduction}
    
    In this paper, we consider the dimension problem of the global attractor for the initial-boundary value problem
    \begin{equation}\label{Problem}
        \begin{cases}
            u_{tt} - \Delta u + (\|\nabla u\|^p + \|u_t\|^p)u_t + f(u) = 0 &\text{in } \Omega\times\mathbb{R}^+, \\
            u(x,0) = u_0(x), u_t(x,0) = u_1(x) &\text{in } \Omega, \\
            u(x,t) = 0 &\text{on } \partial\Omega, 
        \end{cases}
    \end{equation}
    where $p\in (1,2)$, $\Omega$ is a bounded domain in $\mathbb{R}^3$ with smooth boundary $\partial\Omega$ and $f$ satisfies the elementary Assumption \ref{f_assum1} below.
    \begin{assum}\label{f_assum1}
        $f\in C^2(\mathbb{R})$ and satisfies the critical growth condition
    \begin{equation}\label{f_cond1}
        |f''(s)|\leq C(1+|s|),
    \end{equation}
    and the dissipative condition
    \begin{equation}\label{f_cond2}
        \liminf_{|s|\rightarrow\infty} f'(s) \equiv \mu > -\lambda_1,
    \end{equation}
    where $\lambda_1$ is the first eigenvalue of $A$, the negative Laplacian operator with homogeneous Dirichlet boundary condition.
    \end{assum}

    There are massive works on asymptotic behaviours of solutions of nonlinear wave equations since 1970s. People are primarily concerned about the topics of well-posedness of the equations, and the existence, attracting speed as well as fractal dimension of the corresponding attractors. Models with various damping terms have been investigated, including weak damping $ku_t$, strong damping $-k\Delta u_t$, fractional damping $(-\Delta)^{\alpha}u_t$ with $\alpha\in(0,1)$ and nonlinear damping $g(u_t)$. For more information we refer readers to literature \cite{MR2026182,MR1868930,MR2018135,  MR1112054,MR2116726,  MR3504011,MR3561957,MR2237675,  MR2577597,MR1239923} and references therein.

    The dynamics of equations with nonlocal damping also attracted widespread attention. Authors in \cite{MR3986205} have studied well-posedness of a class of extensible beam models with nonlocal energy damping $\left(\|\Delta u\|^2+\|u_t\|^2\right)^q\Delta u_t$, which was first proposed by Balakrishnan-Taylor\cite{balakrishnan1989distributed}. Y. Sun and Z. Yang in \cite{MR4430610} have investigated the existence of strong global and exponential attractors for the equation
    \begin{equation*}
        u_{tt} + \Delta^2 u - \kappa\phi(\|\nabla u\|^2)\Delta u - M(\|\Delta u\|^2+\|u_t\|^2)\Delta u_t + f(u) = h,
    \end{equation*} 
    in which they assume that $M$ is positive on $\mathbb{R}^+$. We also refer to \cite{MR3642020,MR2853537,MR2771816} for more information about wave equations with damping term of the form $M(\|\nabla u\|^2_{\HH})(-\Delta)^{\theta}u_t$ and $M(\|\nabla u\|^2_{\HH})g(u_t)$, where the intensity of the damping involves only the potential part of the energy.

    In this paper we are concerned about the dissipative wave equation \eqref{Problem} with nonlocal damping $\effip u_t$, which is an extended Krasovskii model first studied by Chueshov\cite{MR3408002}. It has weaker damping and the energy decays more slowly near the origin than the equations with usual weak damping. Particularly, degeneration happens at the origin, which from the geometrical perspective complicates the asymptotic behaviours intrinsically. It is therefore interesting to ask if the fractal dimension of the global attractor is still finite, since as will be mentioned below the classical methods to bound fractal dimensions fail in our setting and in most cases degeneration causes infinite-dimensionality. Our main result is as follows.

\begin{thm}\label{sec_introduction_thm_main}
  Suppose Assumption \ref{f_assum1} holds and in addition
  \begin{equation}\label{f_cond3}
    f(0) = 0, \quad f'(0)> -\big(1-\frac{p}{4}\big)\lambda_1,\quad|f'(s)-f'(0)| = o(|s|^p),
  \end{equation}
Then the global attractor of Problem \eqref{Problem} in $\VV\times\HH$ has finite fractal dimension.
\end{thm}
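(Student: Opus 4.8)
The plan is to combine a quasi-stability analysis, valid wherever the damping is non-degenerate, with a separate covering argument tailored to the single degenerate point. I work in the phase space $\VV\times\HH$, write $S(t)$ for the solution semigroup and $\mathcal A$ for the global attractor, whose existence, invariance and boundedness in a stronger, compactly embedded space I take from the well-posedness and dissipativity theory. Multiplying \eqref{Problem} by $u_t$ shows that $E(u,u_t)=\tfrac12\|u_t\|^2+\tfrac12\|\nabla u\|^2+\into F(u)$, with $F(s)=\int_0^s f(\sigma)\,d\sigma$, is a strict Lyapunov functional obeying $\tfrac{d}{dt}E=-\effip\|u_t\|^2\le 0$, so $(S(t),\mathcal A)$ is gradient. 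Since $f(0)=0$ and $f'(0)>-\lambda_1$, the point $0$ is an equilibrium at which $E$ has a strict local minimum; hence its unstable set is trivial and no complete bounded trajectory other than $0$ can remain in a small ball $B_\delta$ around $0$ for all backward time. The sole obstruction to finite-dimensionality is that the coefficient $\effip$ vanishes at $0$, where \eqref{Problem} and its linearization reduce to the conservative equation $v_{tt}+(A+f'(0))v=0$.

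First I would derive a degenerate quasi-stability estimate. For $u^1,u^2$ on $\mathcal A$ put $z=u^1-u^2$ with natural energy $\mathcal E_z$, write the nonlocal damping difference as $\big(\|\nabla u^1\|^p+\|u_t^1\|^p\big)z_t+\big[(\|\nabla u^1\|^p+\|u_t^1\|^p)-(\|\nabla u^2\|^p+\|u_t^2\|^p)\big]u_t^2$, and split $f(u^1)-f(u^2)=f'(0)z+(\cdots)$. A perturbed functional $\Psi=\mathcal E_z+\varepsilon\langle z_t,z\rangle$ should then yield
\begin{equation*}
\mathcal E_z(t)\le C\,\mathcal E_z(0)\,\exp\!\Big(-\kappa\int_0^t\!\big(\|\nabla u^1(s)\|^p+\|u_t^1(s)\|^p\big)\,ds\Big)+C\!\int_0^t\!\big[\text{compact lower-order terms}\big]\,ds .
\end{equation*}
The cross term carrying $u_t^2$ and the critically-growing part of $f$ are absorbed using the boundedness of $\mathcal A$ in the stronger norm, the Sobolev embedding in $\mathbb R^3$, the range $p\in(1,2)$, and the condition $|f'(s)-f'(0)|=o(|s|^p)$ from \eqref{f_cond3}; the sharpened bound $f'(0)>-(1-\tfrac p4)\lambda_1$ is what keeps $\Psi$ equivalent to $\mathcal E_z$ and coercive uniformly as the trajectory approaches $0$. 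The decisive feature is that the contraction rate is governed by $\int_0^t(\|\nabla u^1\|^p+\|u_t^1\|^p)\,ds$, which degenerates precisely when $u^1$ lingers near $0$.

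Next I would run a single covering iteration on $\mathcal A$ under $S(T)$ that isolates the degenerate point. Fix $\beta>0$ and split $\mathcal A=G_\beta\cup\mathcal B_\beta$, where $\mathcal B_\beta$ collects the starting points whose trajectory satisfies $\int_0^T(\|\nabla u\|^p+\|u_t\|^p)\,ds<\beta$. On $G_\beta$ the estimate above furnishes a uniform contraction factor $\eta<1$ modulo a compact seminorm, so an $\epsilon$-ball meeting $\mathcal A$ maps into a controlled number of balls of radius $\eta'\epsilon$ with $\eta'<1$, exactly as in the Chueshov–Lasiecka scheme. Because a trajectory starting outside $B_\delta$ keeps $\|\nabla u\|^p+\|u_t\|^p$ bounded below on a definite time interval (the coefficient fails to be Lipschitz only \emph{at} $0$), one gets $\mathcal B_\beta\subseteq B_{\delta(\beta)}(0)$ with $\delta(\beta)\to0$, so the non-contracting balls are confined to a shrinking neighborhood of the single point $0$. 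Using the strict-minimum and gradient structure, the part of $\mathcal A$ inside $B_\delta$ is the forward flow of the entrance set $\Gamma=\mathcal A\cap\partial B_\delta$, namely $\mathcal A\cap\overline{B_\delta}\subseteq\{0\}\cup\bigcup_{n\ge0}S(n)\big(\bigcup_{t\in[0,1]}S(t)\Gamma\big)$, while the near-origin energy law $\tfrac{d}{dt}E=-\effip\|u_t\|^2$ forces the pieces $S(n)\big(\bigcup_{t\in[0,1]}S(t)\Gamma\big)$ to shrink toward $0$ at a polynomial rate $\sim n^{-1/p}$. Since $\Gamma$ lies at distance $\delta$ from the degenerate point it is covered by the good-set estimate and so is finite-dimensional, each piece has dimension at most $\dim_f\Gamma+1=:d$, and summing the covering numbers $N_\epsilon\lesssim\epsilon^{-d}\sum_n n^{-d/p}$ (a convergent, or at worst polynomially bounded, sum) gives $\dim_f(\mathcal A\cap\overline{B_\delta})<\infty$. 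Combining with the bound on $G_\beta$ and using $\dim_f(X\cup Y)=\max(\dim_f X,\dim_f Y)$ then yields finite fractal dimension of $\mathcal A$.

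The main obstacle is the interface between the global contraction scheme and the local analysis at $0$: the standard quasi-stability theorem requires a contraction uniform over $\mathcal A$, which is exactly what fails at the degenerate point, so one cannot apply it to $\mathcal A$ as a whole, nor — because of non-invariance — to $\mathcal A\setminus B_\delta$ in isolation. The new process must therefore show that the balls failing to contract do not proliferate: they are trapped near the unique point $0$, and the only dimension they can generate is controlled by the finite-dimensional entrance set together with the merely polynomial decay supplied by the degenerate damping. Making this quantitative — pinning down the confinement radius $\delta(\beta)$, establishing the polynomial contraction of $S(n)\Gamma$ uniformly, and estimating the nonlocal cross term with its negative power $p-2$ as $u\to0$ — is the delicate part, and the hypotheses $|f'(s)-f'(0)|=o(|s|^p)$ and $f'(0)>-(1-\tfrac p4)\lambda_1$ in \eqref{f_cond3} are precisely what make these near-origin estimates close.
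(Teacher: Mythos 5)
Your overall strategy is essentially the paper's: prove finite dimensionality of the part of $\mathscr{A}$ away from the origin by a classical contraction-plus-compactness argument, then represent the near-origin part of $\mathscr{A}$ as the forward flow of an entrance set and cover it using the polynomial decay $t^{-1/p}$ together with near-origin regularity of $S(t)$ supplied by \eqref{f_cond3}. (The paper runs the first part through Fr\'echet differentiability and a splitting $DS(t)=L_1+L_2$ with $\|L_1\|<1$, $L_2$ compact, applied to the negatively invariant set $\mathscr{A}\setminus B(0,\epsilon)$; your quasi-stability variant is a reasonable substitute, but note that your good set $G_\beta$, defined by a condition on the damping integral, is not obviously negatively invariant under $S(T)$ — the paper secures negative invariance of $\mathscr{A}\setminus B(0,\epsilon)$ from the decay estimate, and you should do the same.)

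The genuine gap is your final counting step near the origin. From ``each piece $P_n=S(n)\bigl(\bigcup_{t\in[0,1]}S(t)\Gamma\bigr)$ has $\dim_f\le d$'' and ``$\mathrm{diam}(P_n)\lesssim n^{-1/p}$'' you cannot conclude $N_\epsilon\bigl(\bigcup_n P_n\bigr)\lesssim\epsilon^{-d}\sum_n n^{-d/p}$: upper box-counting dimension is not countably stable, and a per-piece dimension bound carries no uniform covering constant — the constants may blow up with $n$ (finite sets have dimension $0$ but arbitrary covering numbers at a fixed scale). Your implicit scaling $N_\epsilon(P_n)\lesssim(n^{-1/p}/\epsilon)^d$ would require $S(n)$ to contract Lipschitz-uniformly at rate $n^{-1/p}$ on the entrance tube, which the degenerate linearization does not give: as the paper's Remark 5.3 shows, the contraction onset time of $DS(t)$ blows up as the base trajectory approaches $0$. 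The repair is exactly the ingredient you already list but never wire into the count: the uniform-in-time Lipschitz (or H\"older) continuity of $S$ on $\mathscr{A}\cap B(0,\epsilon_0)$ (the paper's Lemma 5.2 and Corollary 3.5, where \eqref{f_cond3} enters). It gives $N_\epsilon(P_n)\le N_{\epsilon/L}(Q)\le C\epsilon^{-d}$ with $C,L$ independent of $n$, where $Q=\bigcup_{t\in[0,1]}S(t)\Gamma$; and since by the $t^{-1/p}$ decay only $O(\epsilon^{-p})$ pieces meet the complement of $B(0,\epsilon)$, one gets $N_\epsilon(\mathscr{A}\cap\overline{B_\delta})\lesssim\epsilon^{-(d+p)}$, hence $\dim_f\le d+p<\infty$. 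With this correction your argument closes and becomes the counting in the paper's Theorems 4.3 and 4.4, where the H\"older exponent $\theta$ and the exponent $d_0$ of the decay-time sequence appear as the additive terms $\theta^{-1}d_B+\theta^{-1}d_0+\theta^{-1}$.
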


We should mention that condition \eqref{f_cond3} is only involved in the proof of Lemma \ref{Lipschitz_u} which declares the uniform Lipschitz stability of the semigroup near the origin. However, to prove Theorem \ref{sec_introduction_thm_main} only the uniform H\"{o}lder property on the attractor near the origin is required. Therefore, condition \eqref{f_cond3} shall not be sharp. Especially, by calculating \eqref{diff_estimate_utUt} more accurately, one can improve $\frac{p}{4}$ in the condition $f'(0)> -\big(1-\frac{p}{4}\big)\lambda_1$ with a slightly smaller number $C_p$.

    Dimension problem is an essential subject in the area of infinite-dimensional dynamical systems, which may reflect the complexity of the system. If the fractal dimension of the global attractor is finite, in some sense one can reduce the dynamics on the infinite-dimensional phase space to a simpler one on a finite-dimensional phase space. There have been already some classical approaches to bound the (finite) fractal dimension specifically for negatively invariant sets, including global attractors. For instance, one can use the method of Lyapunov exponent in \cite{MR1441312,MR2767108} to study finite-dimensional behaviors if the quasi-derivative of $S(T)$ is a compact perturbation of some uniform contractive operator for some fixed $T>0$. Another alternative is the quasi-stability inequality method if the semigroup $S(t)$, not necessarily differentiable, satisfies
    \begin{equation}\label{quasi-stable}
        \|S(T)v_1-S(T)v_2\| \leq \eta \|v_1-v_2\| + n_Z(Kv_1-Kv_2),
    \end{equation}
    where $T>0, 0<\eta<1$, $K$ is a Lipschitz mapping from the attractor $\mathscr{A}$ to a Banach space $Z$ and $n_Z$ is a compact seminorm on $Z$. All these methods require some kind of uniform contraction property on the entirety of the attractors. We refer readers to \cite{MR3408002} for more details and to \cite{MR2870874} for exponential attractors of semigroups on Banach spaces.

    However, the existing methods all lose efficacy for our problem due to the degeneration occurring at the origin. In fact, the semigroup behaves just like the wave equation
    \begin{equation*}
        u_{tt} - \Delta u= 0
    \end{equation*}
    near the origin if $f(0)=0$, in which situation the energy does not decay along the direction of each eigenfunction of $A$. Therefore, we can neither write the derivative of $S(T)$ into a compact perturbation of some uniform contractive operator, nor find a constant $\eta<1$ to fulfill \eqref{quasi-stable} uniformly (see Remark \eqref{remark_reason} for details).

    It is worth pointing out that there are just few works dealing with the dimension problem of degenerate models, and almost all of them possess infinite-dimensional attractors. In \cite{MR2392315} M. Efendiev and S. Zelik have proved the finite-dimensionality of global attractors for a class of porous medium equations when the nonlinearity $g(u)$ satisfies $g'(0)>0$. Moreover, they also have obtained the infinite-dimensionality in the case $g'(0)<0$ by estimating Kolmogorov $\epsilon$-entropy. See also \cite{MR2372422,MR2823885} for the corresponding infinite-dimensionality results on parabolic equations with $p$-Laplacian. Besides, using another method of the $Z_2$ index, C. Zhong and W. Niu have also shown in \cite{MR2728546} that the fractal dimension of the global attractor is infinite for a class of $p$-Laplacian equations.

    We now explain the main difficulties and new ideas in the proof while describing the outline of this paper. In Section \ref{sec_well-posedness}, we establish the well-posedness of strong solutions and generalized solutions to problem \eqref{Problem} by the monotone operator method under Assumption \ref{f_assum1}. In the same section, the existence of the global attractor $\mathscr{A}$ in $H^1_0(\Omega)\times L^2(\Omega)$ is proved through the dissipation and asymptotic smoothness. To verify the asymptotic smoothness, we make a decomposition $u=v+w$, such that, $v$ decays uniformly to zero and $w$ possesses higher regularity for each $t>0$ for bounded initial data. This special decomposition is also valid for establishing higher regularity of the global attractor under additional assumption \ref{assum_2} on $f$ in the next section, which infers directly, as a byproduct, the temporal H\"{o}lder continuity of $S(t)$ uniformly on $\mathscr{A}$. More detailedly, the regularity of $\mathscr{A}$ can be deduced from the higher regularity of the attracting set of $w$ and the latter conclusion can be achieved by a process of weighted energy estimate, inspired by the structure of the damping, together with an extended discrete-like Gronwall lemma. This is a novelty of the paper. Another essential estimate in Section \ref{sec_regularity-decay} is Lemma \ref{u_decay_equiv} concerning the accurate decay rate of solutions near the origin on the phase space. Compared to the usual equations with nondegenerate weak damping whose solutions decay exponentially fast near the origin, in our problem the rates of both the norm and the energy are as slow as $t^{-\frac{1}{p}}$. These inherent estimates obtained in Section \ref{sec_regularity-decay} are useful to perform the dimension estimation in the last section.

    Section \ref{sec_dimensions} is devoted to an abstract framework of the dimension calculation for single-point degenerate equations. In consideration of problem \eqref{Problem}, we employ the setting that the degenerate point $0$ is a locally attracting point with a quantitative speed on the attractor and the semigroup is uniformly (in time and space) H\"{o}lder continuous near the origin. Intuitively, the attracting behaviour near $0$ plays a role of contraction effect as the trajectories collapse to $0$. Therefore, it is reasonable to believe the geometry of the attractor near $0$ would not be worse than that of the region away from $0$. Indeed, providing that the dimension of the complement on the attractor of neighbourhoods of $0$ is finite, we are able to declare the finite dimensionality of neighbourhoods of $0$, as stated in Theorems \ref{main} and \ref{main_2}. In this process we decompose the entire attractor into the degenerate part (the neighbourhood of $0$) and the nondegenerate part (the complement of the neighbourhood). The former set is positively invariant and the latter one is negatively invariant. Hence, it is possible to figure out the dimension of the latter one through the pioneering classical methods while the dimension problem of the former part has been reduced to that of the latter part. This is the central idea in Section \ref{sec_dimensions}.

    In Section \ref{sec_application}, we apply the previous theory to problem \eqref{Problem}. To do this, we first show the uniform Lipschitz stability of $S(t)$ near the origin in Lemma \ref{Lipschitz_u} by estimating the operator norm of the derivative $DS(t)$. To calculate the dimension of the nondegenerate part, we split $DS(t)$ into a uniform contractive linear operator and a compact operator, which gives rise to the finite-dimensionality of this negatively invariant set. As a result, Theorem \ref{main_2} leads to Theorem \ref{sec_introduction_thm_main} immediately. 

    Throughout the paper, we use the notations $\|\cdot\|_p=\|\cdot\|_{L^p(\Omega)}$ (particularly, $\|\cdot\|=\|\cdot\|_2$ for brevity) and, for a function $u$
    \begin{equation}\label{I_u_definition}
        I_u(t) = \|\nabla u\|^2 + \|u_t\|^2, \quad I_{u,p}(t) = \|\nabla u\|^p + \|u_t\|^p.
    \end{equation}
    Without causing misunderstanding, we also write both the inner product in $\HH$ and the ordered pair of $u, v$ as $(u,v)$. Besides, we may use $C$ to denote any positive constant which may differ from each other, even in the same line.

    \section{well-posedness and global attractor}\label{sec_well-posedness}
    In this section we investigate the global well-posedness of Problem \eqref{Problem} as well as the existence of global attractors under Assumption \ref{f_assum1}. 
    \begin{Def}
        Suppose $u\in C([0,T];H^1_0(\Omega)) \cap C^1([0,T];L^2(\Omega))$ with $u(0)=u_0$ and $u_t(0)=u_1$. Then we say
        \item[(i)] $u$ is a strong solution if
        \begin{itemize}
            \item [a)] $u\in W^{1,1}(a,b;H_0^1(\Omega))$ and $u_t\in W^{1,1}(a,b;L^2(\Omega)), \forall\, 0<a<b<T$;
            \item [b)] $-\Delta u(t) \in L^2(\Omega), \forall\, t\in [0,T]$;
            \item [c)] \eqref{Problem} holds in $L^2(\Omega)$ for almost every $t\in [0,T]$.
        \end{itemize}
        \item[(ii)] $u$ is a generalized solution if there exists a sequence of strong solutions $\{ u^{(j)} \}$ with initial data $(u^{(j)}_0,u^{(j)}_1)$ such that
        $$
        (u^{(j)},u_t^{(j)}) \rightarrow (u,u_t) \text{ in } C([0,T];H_0^1(\Omega)\times L^2(\Omega)).
        $$
        \item[(iii)] $u$ is a weak solution if
        \begin{align}\label{weak_def}
          \begin{split}
            &\int_{\Omega} u_t(x)\psi(x) dx + \int_0^t \Big[ \int_{\Omega} \nabla u(\tau,x)\nabla \psi(x) dx +  \int_{\Omega} f(u(\tau,x))\psi(x) dx \\ 
            &\qquad + (\|\nabla u(\tau)\|^p + \|u_t(\tau)\|^p) \int_{\Omega}u_t(\tau,x)\psi(x) dx \Big] d\tau = \int_{\Omega} u_1(x)\psi(x) dx
          \end{split}
        \end{align}
        holds for all $\psi\in H_0^1(\Omega)$ and almost every $t\in [0,T]$.
      \end{Def}
      
    \begin{thm}\label{well-posed}
        Under Assumption \ref{f_assum1}, we have the following conclusions:
        \item[(i)] If $(u_0,u_1)\in (H^2(\Omega)\cap\VV)\times\VV$ , then Problem \eqref{Problem} admits a unique strong solution $u$, which satisfies that
            \begin{align*}
                &(u_t,u_{tt})\in L^{\infty}(0,T;\VV\times\HH), \\
                &u_t\in C_r([0,T),\VV),\quad u_{tt}\in C_r([0,T),\HH), \\
                &-\Delta u + \effi u_t \in C_r([0,T),\HH),
            \end{align*}
        where $C_r$ is the space of right-continuous functions.
        \item[(ii)] If $(u_0,u_1)\in\VV\times\HH$, Problem \eqref{Problem} admits a unique generalized solution, which is also a weak solution. Furthermore, $u$ satisfies the energy equality
        \begin{equation}\label{energy_eq}
            E(t) + \int_0^t \effitaup \|u_t(\tau)\|^2 d\tau = E(0),
        \end{equation}
        where $E(t) \triangleq \frac{1}{2}\effit + \int_{\Omega} F(u(t)) dx$ with $F(s) \triangleq \int_0^s f(\tau) d\tau$.
\end{thm}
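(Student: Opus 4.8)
The plan is to recast \eqref{Problem} as a first-order abstract Cauchy problem and apply the theory of locally Lipschitz perturbations of maximal monotone operators. Writing $U=(u,u_t)$ and working in the energy space $\mathcal{H}=\VV\times\HH$ equipped with the inner product $\langle(u_1,v_1),(u_2,v_2)\rangle=(\nabla u_1,\nabla u_2)+(v_1,v_2)$, I would set
\begin{equation*}
  \mathcal{A}(u,v)=\big(-v,\;-\Delta u+(\|\nabla u\|^p+\|v\|^p)v\big),\qquad
  \mathcal{F}(u,v)=(0,-f(u)),
\end{equation*}
so that \eqref{Problem} reads $U'+\mathcal{A}U=\mathcal{F}(U)$, $U(0)=(u_0,u_1)$. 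Since $\langle\mathcal{A}U,U\rangle=(\|\nabla u\|^p+\|v\|^p)\|v\|^2\ge 0$, the operator $\mathcal{A}$ is monotone, and the genuinely delicate point of part (i) is to verify that it is \emph{maximal}, i.e.\ that $R(I+\mathcal{A})=\mathcal{H}$. Eliminating $v=u-g_1$ from $(I+\mathcal{A})(u,v)=(g_1,g_2)$ reduces this to the nonlocal semilinear elliptic equation
\begin{equation*}
  u-\Delta u+\big(\|\nabla u\|^p+\|u-g_1\|^p\big)(u-g_1)=g_1+g_2 .
\end{equation*}
I would solve it by freezing the nonlocal scalar $c=\|\nabla u\|^p+\|u-g_1\|^p\ge 0$, solving the resulting linear coercive problem for a unique $u_c\in H^2(\Omega)\cap\VV$, and closing the scalar equation $c=\Phi(c):=\|\nabla u_c\|^p+\|u_c-g_1\|^p$ on $[0,\infty)$. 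The energy estimate for the frozen problem gives $\|\nabla u_c\|^2\le C(1+c)$, whence $\Phi(c)\le C(1+c)^{p/2}+C$ grows sublinearly because $p<2$; as $\Phi$ is continuous with $\Phi(0)\ge 0$, the intermediate value theorem produces a fixed point, identifying $\mathcal{A}$ as maximal monotone with $D(\mathcal{A})=(H^2(\Omega)\cap\VV)\times\VV$.

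Next I would confirm that $\mathcal{F}$ is locally Lipschitz on $\mathcal{H}$. Integrating \eqref{f_cond1} gives $|f'(s)|\le C(1+|s|^2)$, and since $\VV\hookrightarrow L^6(\Omega)$ in dimension three, the map $u\mapsto f(u)$ sends $\VV$ into $\HH$ and is Lipschitz on bounded sets: writing $f(u)-f(\bar u)=f'(\xi)(u-\bar u)$ with $f'(\xi)\in L^3(\Omega)$ and $u-\bar u\in L^6(\Omega)$, H\"{o}lder's inequality closes the estimate. With $\mathcal{A}$ maximal monotone and $\mathcal{F}$ locally Lipschitz, the abstract theory then yields, for each $U_0\in D(\mathcal{A})$, a unique local strong solution $U\in W^{1,\infty}_{\mathrm{loc}}$ into $\mathcal{H}$ with $U(t)\in D(\mathcal{A})$ and a right derivative at every $t$. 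Reading off the two components produces $(u_t,u_{tt})\in L^\infty\big(0,T;\VV\times\HH\big)$ and the right-continuity $u_t\in C_r(\VV)$, $u_{tt}\in C_r(\HH)$; the last assertion of (i) follows on rewriting $-\Delta u+\effip u_t=-u_{tt}-f(u)$, whose right-hand side lies in $C_r([0,T),\HH)$.

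To pass from local to global I would establish the energy identity by pairing the equation with $u_t$, which yields $\tfrac{d}{dt}E(t)+(\|\nabla u\|^p+\|u_t\|^p)\|u_t\|^2=0$ for strong solutions. The dissipativity condition \eqref{f_cond2} gives $F(s)\ge \tfrac{\mu-\varepsilon}{2}s^2-C$, so together with the Poincar\'{e} inequality the functional $\tfrac12\|\nabla u\|^2+\int_\Omega F(u)\,dx$ is coercive and bounded below; hence $E(t)\le E(0)$ forces an a priori bound on $\|U(t)\|_{\mathcal{H}}$ on every finite interval. Since the continuation criterion for the perturbed equation is precisely blow-up of this norm, the strong solution is global, which completes part (i).

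For part (ii) I would approximate $U_0\in\mathcal{H}$ by data $U_0^{(j)}\in D(\mathcal{A})$ and show the corresponding strong solutions are Cauchy in $C([0,T];\mathcal{H})$. Subtracting the equations for two solutions $u,\bar u$, pairing with $(u-\bar u)_t$, and splitting the damping difference as
\begin{equation*}
  (\|\nabla u\|^p+\|u_t\|^p)(u_t-\bar u_t)+\big[(\|\nabla u\|^p+\|u_t\|^p)-(\|\nabla\bar u\|^p+\|\bar u_t\|^p)\big]\bar u_t ,
\end{equation*}
the first (dissipative) term has a favourable sign and is discarded, while the second is controlled by the local Lipschitz continuity of $s\mapsto s^p$ on $[0,\infty)$ (valid since $p>1$, so its derivative $ps^{p-1}$ vanishes at the origin) --- so the degeneracy causes no trouble on a fixed interval of bounded energy. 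A Gronwall argument then gives uniqueness and Lipschitz dependence on the data, whence the generalized solution; passing to the limit in \eqref{weak_def} shows it is also a weak solution. Finally, the energy identity \eqref{energy_eq}, already available for strong solutions, survives the limit because uniform convergence in $\mathcal{H}$ makes every nonlinear and nonlocal term converge. The main obstacle throughout is the range condition for $\mathcal{A}$: the nonlocal, $p$-homogeneous damping coefficient blocks a routine monotone-operator argument and forces the scalar fixed-point reduction described in the first step.
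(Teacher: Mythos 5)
The decisive step of your argument fails: the operator $\mathcal{A}(u,v)=\big(-v,\,-\Delta u+(\|\nabla u\|^p+\|v\|^p)v\big)$ is \emph{not} monotone. The identity $\langle\mathcal{A}U,U\rangle=(\|\nabla u\|^p+\|v\|^p)\|v\|^2\ge0$ only expresses positivity at the origin; monotonicity of a \emph{nonlinear} operator requires $\langle\mathcal{A}U_1-\mathcal{A}U_2,U_1-U_2\rangle\ge0$ for all pairs in $D(\mathcal{A})$. Here the skew-symmetric wave parts cancel in your energy inner product, leaving
\begin{equation*}
\big((\|\nabla u_1\|^p+\|v_1\|^p)v_1-(\|\nabla u_2\|^p+\|v_2\|^p)v_2,\;v_1-v_2\big).
\end{equation*}
The contribution of $v\mapsto\|v\|^pv$ is indeed nonnegative (this map is monotone), but the cross term $\|\nabla u\|^pv$, which couples the potential variable to the velocity, destroys the sign: take $u_1=0$, $\|\nabla u_2\|=1$, $v_2=\tfrac12 v_1$ and $\|v_1\|=\epsilon$. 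Then the cross term contributes exactly $-\tfrac14\epsilon^2$, while the monotone part contributes only $O(\epsilon^{p+2})$, so the whole expression is negative for $\epsilon$ small. The defect is of size $\|\nabla u_1-\nabla u_2\|\,\|v_2\|\,\|v_1-v_2\|$ with a coefficient that is unbounded on $D(\mathcal{A})$, and a rescaling ($u_1=0$, $\|\nabla u_2\|=1$, $v_1=(1+s)v_2$ with $\|v_2\|=R\to\infty$ and $s\sim R^{-p}$) shows that no shifted operator $\mathcal{A}+\lambda I$ is monotone either. Consequently the abstract maximal-monotone-plus-locally-Lipschitz theorem you invoke does not apply to your splitting; your fixed-point verification of the range condition, correct or not, cannot repair this, since maximality without monotonicity yields nothing. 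Your closing remark that ``the main obstacle throughout is the range condition for $\mathcal{A}$'' therefore misidentifies the difficulty.

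The repair is precisely the splitting the paper uses: keep only the linear wave part $\mathcal{A}U=(-v,-\Delta u)$, whose maximal monotonicity reduces to solving a linear elliptic system, and place the \emph{entire} damping term together with $f(u)$ into the perturbation $\mathcal{B}U=\big(0,\,f(u)+(\|\nabla u\|^p+\|v\|^p)v\big)$. This $\mathcal{B}$ is locally Lipschitz on $\VV\times\HH$ precisely because $p>1$ makes $s\mapsto s^p$ locally Lipschitz --- exactly the observation you yourself make when proving uniqueness in part (ii), so your own estimates supply the needed bound; no sign structure of the damping is required at the existence stage. With this corrected splitting, the rest of your proposal --- the energy identity, coercivity from \eqref{f_cond2} and Poincar\'e's inequality yielding the a priori bound and hence globality via the blow-up alternative, the density/Gronwall argument producing generalized solutions, and passage to the limit in \eqref{weak_def} and in the energy equality \eqref{energy_eq} --- goes through and coincides in substance with the paper's proof.
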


    To prove Theorem \ref{well-posed}, we use the monotone operator method as in \cite{MR1941662}. 

    \begin{lemma}\label{sol_monotone_op}
        Let $A:D(A)\subset H\to H$ is a maximal monotone operator on a Hilbert space $H$, i.e., $(Ax_1-Ax_2,x_1-x_2)_H\geq 0$ for any $x_1,x_2\in D(A)$ and $Rg(I+A)=H$; besides, assume that $0\in A0$. Let $B:H\to H$ be locally Lipschitz. If $u_0\in D(A),f\in W^{1, 1}(0,t;H)$ for all $t>0$, then there exists $t_{max}\leq \infty$ such that the initial value problem
        \begin{equation}\label{eq_monotone_op}
            u_t+Au+Bu \ni f \text{ and } u=u_0
        \end{equation}
        has a unique strong solution $u$ on the interval $[0,t_{max})$.
        
        Whereas, if $u_0\in \overline{D(A)}, f\in L^1(0,t;H)$ for all $t>0$, then problem \eqref{eq_monotone_op} has a unique generalized solution $u\in C([0,t_{max});H)$.
        
        Moreover, in both case we have $\lim_{t\to t_{max}} \|u(t)\|_H=\infty$ provided $t_{max}<\infty$.
    \end{lemma}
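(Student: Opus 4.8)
The plan is to treat \eqref{eq_monotone_op} as a locally Lipschitz perturbation of the unperturbed Cauchy problem $u_t+Au\ni g$ and to combine the classical Br\'{e}zis theory of maximal monotone operators with a Banach fixed-point argument on a short time interval. First I would invoke the fundamental existence theorem: since $A$ is maximal monotone with $0\in A0$, for every $u_0\in D(A)$ and every forcing $g\in W^{1,1}(0,T;H)$ the problem $u_t+Au\ni g$, $u(0)=u_0$, has a unique strong solution, which is moreover Lipschitz in time with the a priori bound $\|u_t(t)\|\le\|g(0)-A^0u_0\|+\int_0^t\|g'(s)\|\,ds$, where $A^0u_0$ is the element of minimal norm in $Au_0$; for $u_0\in\overline{D(A)}$ and $g\in L^1(0,T;H)$ there is instead a unique generalized solution in $C([0,T];H)$. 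The cornerstone is the nonexpansivity estimate coming from monotonicity: if $u_1,u_2$ solve the equation with data $(u_0^1,g_1)$ and $(u_0^2,g_2)$, then pairing the difference of the equations with $u_1-u_2$ and discarding the nonnegative term $(Au_1-Au_2,u_1-u_2)$ gives
$$
\|u_1(t)-u_2(t)\|\le\|u_0^1-u_0^2\|+\int_0^t\|g_1(s)-g_2(s)\|\,ds.
$$

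To incorporate $B$, fix $R>0$ and let $L_R$ be the Lipschitz constant of $B$ on the ball $\{\,v:\|v-u_0\|\le R\,\}$. On the complete metric space of curves $v\in C([0,\tau];H)$ with $v(0)=u_0$ and $\sup_{t\le\tau}\|v(t)-u_0\|\le R$, define $\Phi(v)$ to be the solution of $u_t+Au\ni f-B\circ v$ with $u(0)=u_0$; since $v$ is continuous, $B\circ v\in L^1$, so $\Phi$ is well defined, and the nonexpansivity estimate yields
$$
\sup_{t\le\tau}\|\Phi(v_1)(t)-\Phi(v_2)(t)\|\le L_R\,\tau\,\sup_{t\le\tau}\|v_1(t)-v_2(t)\|.
$$
Choosing $\tau$ small so that $L_R\tau<1$ and, shrinking $\tau$ further, so that $\Phi$ maps the ball into itself, Banach's theorem furnishes a unique local solution. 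In the generalized case this is already the asserted mild solution. In the strong case, with $u_0\in D(A)$ and $f\in W^{1,1}$, I would additionally show that each Picard iterate $u^{(n)}$ is Lipschitz in time with a bound uniform in $n$: writing $M_n=\sup_{t\le\tau}\|u^{(n)}_t(t)\|$, the a priori estimate together with $\|\tfrac{d}{dt}(B\circ u^{(n-1)})\|\le L_R\|u^{(n-1)}_t\|$ yields the recursion $M_n\le C_0+L_R\tau\,M_{n-1}$, which closes because $L_R\tau<1$. Hence the limit is Lipschitz, so $B\circ u\in W^{1,1}$ and $u=\Phi(u)$ is a genuine strong solution.

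Finally I would extend the local solution to a maximal interval $[0,t_{max})$ by restarting the construction from later times, and establish the blow-up alternative by contradiction: if $t_{max}<\infty$ while $\liminf_{t\to t_{max}}\|u(t)\|<\infty$, then along a sequence the solution stays in a fixed ball, on which the local Lipschitz constant of $B$ is controlled, so the local existence time is bounded below uniformly and the solution can be continued past $t_{max}$, contradicting maximality. I expect the main obstacle to be the strong-solution regularity rather than mere continuity, i.e.\ verifying that the fixed point actually lies in $W^{1,1}(a,b;H)$ and not only in $C([0,\tau];H)$; this is precisely where the uniform Lipschitz bound on the iterates, and hence the hypotheses $u_0\in D(A)$ and $f\in W^{1,1}$, enter decisively.
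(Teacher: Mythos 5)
The paper itself offers no proof of this lemma: it is invoked as a known result of the monotone-operator theory (the reference cited just before its statement), so there is no internal argument to compare against and your proposal must stand on its own. Its architecture is the standard one — Br\'ezis' existence theory for the unperturbed inclusion $u_t+Au\ni g$, the nonexpansivity estimate from monotonicity, a Banach fixed point for the locally Lipschitz perturbation, and a continuation argument — and the local existence/uniqueness part and the strong-regularity part are essentially correct. In particular, the recursion $M_n\le C_0+L_R\tau M_{n-1}$ for the Lipschitz constants of the Picard iterates (noting $u^{(n)}(0)=u_0$ for all $n$, so $C_0$ is independent of $n$, and that a Lipschitz curve in a Hilbert space lies in $W^{1,\infty}\subset W^{1,1}$) is the right way to see that the fixed point is Lipschitz, hence that $f-B\circ u\in W^{1,1}$ and $u$ is a genuine strong solution.

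The genuine gap is in the blow-up alternative, and it sits exactly where the hypothesis $0\in A0$ — which you never invoke — must enter. You claim that once $u(t_n)$ stays in a fixed ball, ``the local existence time is bounded below uniformly'' because the Lipschitz constant of $B$ is controlled there. But your fixed-point scheme runs on the set of curves with $\sup_{t\le\tau}\|v(t)-w_0\|\le R$, and the self-mapping condition forces $\sup_{t\le\tau}\|w(t)-w_0\|$ to be small, where $w$ solves the unperturbed problem $w_t+Aw\ni 0$, $w(0)=w_0$. For each fixed $w_0$ this tends to $0$ as $\tau\to 0$ (which is why your local existence is fine), but it is \emph{not} uniform over bounded sets of initial data: already for $H=\ell^2$, $A=\mathrm{diag}(n)$, $w_0=e_n$, one has $\|e^{-tA}e_n-e_n\|=1-e^{-tn}$, so the admissible $\tau$ shrinks like $1/n$ along a bounded sequence. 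Hence restarting at $u(t_n)$ does not, as written, yield existence times bounded away from zero, and the continuation past $t_{max}$ does not follow. The standard repair uses $0\in A0$: the zero function solves the homogeneous problem, so nonexpansivity against it gives $\|\Phi(v)(t)\|\le\|w_0\|+\int_0^t\|f-B(v)\|\,ds$, and running the fixed point on the origin-centered set $\{v:\,v(0)=w_0,\ \sup_{t\le\tau}\|v(t)\|\le\rho+R\}$ makes both the self-mapping and the contraction conditions depend only on $\rho$, $R$, $\sup_{\|y\|\le\rho+R}\|B(y)\|$, the Lipschitz constant of $B$ on that ball, and the modulus of integrability of $f$ near $t_{max}$ — all uniform over $\|w_0\|\le\rho$. (For the restart in the strong case, note also that Br\'ezis' theory keeps $u(t)\in D(A)$ for every $t$, so $u(t_n)$ is admissible initial data.) With this modification your argument closes.
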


\begin{proof}[Proof of Theorem \ref{well-posed}]
  \textbf{Step 1.} We establish the existence and uniqueness of strong solutions and generalized solutions.

  Let $U=(u,v)^T$, where $v=u_t$. Define $\mathcal{A}:D(\mathcal{A})\subset\VV\times\HH \rightarrow \VV\times\HH$ and $\mathcal{B}:\VV\times\HH\rightarrow\VV\times\HH$ by
    \begin{align*}
        \mathcal{A}U=\left(
      \begin{array}{c}
        -v \\
        -\Delta u
      \end{array}
      \right),\quad \mathcal{B}U=\left(
      \begin{array}{c}
        0 \\
        f(u) + (\|v\|^p+ \|\nabla u\|^p)v
      \end{array}
      \right),
    \end{align*}
    respectively, where $D(\mathcal{A}) = (H^2(\Omega)\cap\VV)\times\VV$. Then Problem \eqref{Problem} can be written into
    \begin{equation*}
            \begin{cases}
                U_t + \mathcal{A}U + \mathcal{B}U = 0, \\
                U(0) = U_0,
            \end{cases}
    \end{equation*}
    where $U_0=(u_0,u_1)^T$. It is easy to see that $\mathcal{A}$ is monotone. To prove $\mathcal{A}$ is maximal monotone, it is sufficient to show that
    \begin{equation}\label{subjectivity}
        Rg(\mathcal{A}+I) = \VV\times\HH.
    \end{equation}
    Indeed, for any $(f_0,f_1)\in \VV\times\HH$, we consider the following equation
    \begin{equation}\label{equa_A+I}
        (\mathcal{A}+I)U = \left(
            \begin{aligned}
                -v+u \\
                -\Delta u+v
            \end{aligned}
        \right) = \left(
            \begin{aligned}
                f_0 \\ f_1
            \end{aligned}
        \right).
    \end{equation}
    Eliminating $u$ and $v$ respectively, we transform \eqref{equa_A+I} into 
    \begin{equation}\label{equa_G}
        \begin{cases}
            -\Delta u + u = f_0 + f_1 \in \HH, \\
            -\Delta v + v = \Delta f_0 + f_1 \in H^{-1}(\Omega).
        \end{cases}
    \end{equation}
    By the method of variations, it is well known that \eqref{equa_G} is well-posed, and so is \eqref{equa_A+I}. Therefore, we have proved \eqref{subjectivity} and $\mathcal{A}$ is maximal monotone. Denote $\theta_u = \theta u_1 + (1-\theta)u_2, \theta_v = \theta v_1 + (1-\theta)v_2$. It holds that
    \begin{align*}
      \|f(u_1)-f(u_2)\|^2
      &= \into \left|\int_0^1 f'(\theta_u)(u_1-u_2)d\theta \right|^2 dx \\
      &\leq C\into |u_1-u_2|^2 \left(\int_0^1(1+\theta_u^2)d\theta\right)^2dx \\
      &\leq C\|u_1-u_2\|_6^2 \left(\into (1+|u_1|^2+|u_2|^2)^3 dx\right)^{\frac{2}{3}} \\
      &\leq C(1+\|\nabla u_1\|+\|\nabla u_2\|)^4 \|\nabla u_1-\nabla u_2\|^2,
    \end{align*}
    i.e., $\|f(u_1)-f(u_2)\|\leq C(1+\|\nabla u_1\|+\|\nabla u_2\|)^2 \|\nabla u_1-\nabla u_2\|$. Besides, we have
    \begin{align*}
      \left\| \|v_1\|^p v_1 - \|v_2\|^p v_2 \right\|
      &\leq \left|\|v_1\|^p-\|v_2\|^p\right|\|v_1\| + \|v_2\|^p \|v_1-v_2\| \\
      &=\left|p\int_0^1\|\theta_v\|^{p-2}(\theta_v,v_1-v_2)d\theta\right|\|v_1\| + \|v_2\|^p\|v_1-v_2\| \\
      &\leq \left(C_p(\|v_1\|+\|v_2\|)^{p-1}\|v_1\|+\|v_2\|^p\right)\|v_1-v_2\|,
    \end{align*}
    and
    \begin{align*}
      &\| \|\nabla u_1\|^pv_1 - \|\nabla u_2\|^pv_2 \| 
      = \left\| (\|\nabla u_1\|^p-\|\nabla u_2\|^p)v_1 + \|\nabla u_2\|^p(v_1-v_2) \right\| \\
      \leq &\left|p\int_0^1\|\nabla \theta_u\|^{p-2}(\nabla \theta_u,\nabla u_1-\nabla u_2)d\theta\right|\|v_1\| + \|\nabla u_2\|^p \|v_1-v_2\| \\
      \leq &C_p\|v_1\|(\|\nabla u_1\|+\|\nabla u_2\|)^{p-1}\|\nabla u_1-\nabla u_2\| + \|\nabla u_2\|^p\|v_1-v_2\|.
  \end{align*}
    Therefore, $\mathcal{B}$ is locally Lipschitz on $\VV\times\HH$. Hence, it follows from Lemma \ref{sol_monotone_op} that, for any $(u_0,u_1)\in (H^2(\Omega)\cap\VV)\times\VV$, there exists $t_{max}\leq\infty$ such that Problem \eqref{Problem} has a unique strong solution on $[0, t_{max})$. Noticing that $D(\mathcal{A})$ is dense in $\VV\times\HH$, we also know that, for any $(u_0,u_1)\in \VV\times\HH$, Problem \eqref{Problem} has a unique generalized solution. Besides, both strong solutions and generalized solutions belong to $C([0,t_{max});\VV\times\HH)$ and $t_{max}$ is maximal in the sense that
    \begin{equation*}
        \text{if } t_{max}<\infty, \text{ then } \lim_{t\rightarrow t_{max}} \|(u,u_t)\|_{\VV\times\HH} = \infty.
    \end{equation*}

    To prove the global well-posedness, it suffices to show that $\|(u,u_t)\|_{\VV\times\HH}$ would not blow up in finite time. Taking $\mu_0\in (-\mu,\lambda_1)$, we know from \eqref{f_cond2} that there exists $M>0$ such that 
    \begin{equation}\label{df_inf}
        f'(s)>-\mu_0\quad\textrm{for }|s|>M,
    \end{equation}
which implies that
    \begin{equation*}
            \begin{cases}
                F(s)\geq -\frac{\mu_0}{2}s^2-C_1, &|s|>M, \\
                F(s)\leq C_1, &|s|\leq M.
            \end{cases}
    \end{equation*}
    Therefore, we have that
        \begin{align*}
            \into F(u)dx
            &= \int_{\{|u|>M\}} F(u) dx + \int_{\{|u|\leq M\}} F(u) dx \geq -\frac{\mu_0}{2} \|u\|^2 -C \\&\geq -\frac{\mu_0}{2\lambda_1} \|\nabla u\|^2 - C,
        \end{align*}
  and further
    \begin{equation}\label{E_inf}
        E(t)\geq \frac{1}{2}\Big(1-\frac{\mu_0}{\lambda_1}\Big)(\|\nabla u\|^2 + \|u_t\|^2) - C.
    \end{equation}
    Multiplying equation \eqref{Problem} by $u_t$ and integrating over $\Omega\times(0,t_{max})$, we know that $u$ satisfies the energy equality \eqref{energy_eq}, which implies $E(t)$ is non-increasing with $t$. This tells us that $E(t)$ would not blow up in finite time, and so is $\|(u, u_t)\|_{H^1_0(\Omega)\times L^2(\Omega)}$ by \eqref{E_inf}. Hence the strong solutions and generalized solutions are globally well-posed.

    \textbf{Step 2.} We prove that generalized solutions are also weak solutions. Let $(u,u_t)$ be a generalized solution with initial data $(u_0,u_1)$. Then there exists a sequence of strong solutions $(u^{(j)},u_t^{(j)})$ with initial data $(u_0^{(j)},u_1^{(j)})$ such that
    \begin{equation*}
        (u^{(j)},u_t^{(j)})\rightarrow (u,u_t) \text{ in } C([0,T];\VV\times\HH) \text{ as } j\rightarrow\infty.
    \end{equation*}
    Obviously, \eqref{weak_def} holds for each $(u^{(j)},u_t^{(j)})$, i.e.
        \begin{align*}
            &\int_{\Omega} u^{(j)}_t(x)\psi(x) dx + \int_0^t \left[ \int_{\Omega} \nabla u^{(j)}(\tau,x)\nabla \psi(x) dx + \int_{\Omega} f(u^{(j)}(\tau,x))\psi(x) dx \right. \\ 
            &+ \left.(\|\nabla u^{(j)}(\tau)\|^p + \|u^{(j)}_t(\tau)\|^p) \int_{\Omega}u^{(j)}_t(\tau,x)\psi(x) dx \right] d\tau = \int_{\Omega} u^{(j)}_1(x)\psi(x) dx.
        \end{align*}
      It is easy to see that
        \begin{align*}
          \intto\nabla u^{(j)}\nabla\psi+f(u^{(j)})\psi dxdt\rightarrow\intto \nabla u\nabla\psi+f(u)\psi dxdt.
    \end{align*}
Besides, since $\into u_t^{(j)}\psi dx$ converges to $\into u_t\psi dx$ for all $t\in [0,T]$ and $(\|\nabla u^{(j)}\|^p+\|u^{(j)}_t\|^p)(u_t^{(j)},\psi)$ is bounded on $[0,T]$, we infer from Lebesgue Dominated Convergence Theorem that
    \begin{equation*}
        \intt (\|\nabla u^{(j)}\|^p+\|u^{(j)}_t\|^p)\into u_t^{(j)}\psi dxdt \rightarrow \intt \effip\into u_t\psi dxdt.
    \end{equation*}
    Hence, equality \eqref{weak_def} holds for $u$, i.e., $u$ is also a weak solution. Following a similar argument, one can verify the energy equality \eqref{energy_eq}.
  \end{proof}
  
  By Theorem \ref{well-posed}, Problem \eqref{Problem} generates a semigroup on $\VV\times\HH$, denoted by $S(t)$. We will show $S(t)$ possesses a global attractor. Let us start with the dissipation.
  
    \begin{thm}\label{dissipativity}
        Under Assumption \ref{f_assum1}, the dynamical system $(\VV\times\HH, S(t))$ generated by Problem \eqref{Problem} is dissipative. In other words, there exists a constant $R>0$ such that for any bounded set $B\subset\VV\times\HH$, there exists $t_0=t_0(B)$ satisfying, for any $t\geq t_0$ and any $(u,v)\in B$,
    \begin{equation*}\label{diss_cond}
        \|S(t)(u,v)\|_{\VV\times\HH} \leq R.
    \end{equation*}
\end{thm}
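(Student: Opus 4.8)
The plan is to build a perturbed energy (Lyapunov) functional and to drive it into a fixed ball. With $E(t)$ as in \eqref{energy_eq}, set $\Lambda(t) = E(t) + \epsilon\,(u,u_t)$ for a small parameter $\epsilon>0$. I would first record two identities: the differential form of the energy equality \eqref{energy_eq},
\[
  \frac{d}{dt}E(t) = -I_{u,p}(t)\,\|u_t\|^2,
\]
and the identity obtained by testing \eqref{Problem} with $u$,
\[
  \frac{d}{dt}(u,u_t) = \|u_t\|^2 - \|\nabla u\|^2 - I_{u,p}(t)\,(u,u_t) - \into f(u)u\,dx .
\]
By \eqref{E_inf} we have $E(t)\ge \tfrac12\big(1-\tfrac{\mu_0}{\lambda_1}\big)I_u(t) - C$, so for $\epsilon$ small $\Lambda$ is equivalent to $E$, and $E$ to $I_u$, up to additive constants; hence it suffices to force $\Lambda$ into a bounded set.

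Combining the two identities and bounding $-\into f(u)u\,dx$ by the dissipativity of $f$ gives, for some $\kappa>0$,
\[
  \frac{d}{dt}\Lambda \le -I_{u,p}\|u_t\|^2 - \epsilon\kappa\|\nabla u\|^2 + \epsilon\|u_t\|^2 - \epsilon\,I_{u,p}(u,u_t) + \epsilon C .
\]
The term $\epsilon\|u_t\|^2$ is harmless, so everything hinges on the nonlocal cross term $-\epsilon\,I_{u,p}(u,u_t)$. Estimating it by Cauchy--Schwarz and Young,
\[
  \epsilon\,I_{u,p}\,|(u,u_t)| \le \tfrac12 I_{u,p}\|u_t\|^2 + \tfrac{\epsilon^2}{2\lambda_1}\,I_{u,p}\|\nabla u\|^2 ,
\]
the first piece is absorbed by the damping, but the second is of order $\|\nabla u\|^{p+2}$. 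Since $p\in(1,2)$ this grows strictly faster than the coercive term $-\epsilon\kappa\|\nabla u\|^2$, so the inequality cannot be closed pointwise by this crude bound.

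This cross term is the main obstacle, and it is a high-energy phenomenon, in contrast with the degeneracy at the origin governing the rest of the paper. It bites in the overdamped regime, where $\|\nabla u\|$ is large while $\|u_t\|$ is small: there $I_{u,p}\sim\|\nabla u\|^p$ is large and the quasi-stationary balance $I_{u,p}u_t\approx\Delta u - f(u)$ of \eqref{Problem} makes $(u,u_t)$ negative, so the Young bound above is far from sharp. I would therefore decompose the phase space. Where $\|u_t\|\ge \epsilon\,\lambda_1^{-1/2}\|\nabla u\|$, the factor $(u,u_t)$ is controlled by $\|u_t\|$ and the cross term is genuinely absorbed by $-\tfrac12 I_{u,p}\|u_t\|^2$, closing the inequality to $\frac{d}{dt}\Lambda\le -\delta\Lambda + C$. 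In the complementary region I would discard $\Lambda$ and use the energy identity directly: the equilibrium set $\{-\Delta u + f(u)=0\}$ is bounded in $\VV$ (it satisfies $\|\nabla u\|^2\le C$ by the dissipativity of $f$), away from it the forcing $\Delta u - f(u)$ is bounded below, so the quasi-stationary balance forces $\|u_t\|$ to build up and the strong damping makes $I_{u,p}\|u_t\|^2$ bounded below by a positive power of the energy, yielding effectively $\frac{d}{dt}E\lesssim -E^{1-\frac p2}+C$ with $1-\tfrac p2\in(0,1)$ and finite-time entry of $E$ into a fixed ball.

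A comparison (Gronwall-type) argument on these two alternatives then produces a radius $R$ and an entry time $t_0=t_0(B)$ with $\|S(t)(u,v)\|_{\VV\times\HH}\le R$ for $t\ge t_0$ and $(u,v)\in B$. I expect the Lyapunov computation in the non-degenerate regime to be routine; the real difficulty is making the overdamped estimate uniform, in particular obtaining a lower bound on $\|\Delta u - f(u)\|$ --- the distance to the equilibrium set --- uniformly over the high-energy part of phase space, since this is exactly what controls the problematic $\|\nabla u\|^{p+2}$ term.
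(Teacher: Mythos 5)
Your setup is exactly the paper's: the same perturbed functional $E_\epsilon(t)=E(t)+\epsilon(u,u_t)$, the same two identities, and you correctly isolate the nonlocal cross term $\epsilon\, I_{u,p}(u,u_t)$ as the only real obstacle. The gap is in what you conclude from it. You decide the estimate ``cannot be closed pointwise'' because $I_{u,p}\|\nabla u\|^2\sim\|\nabla u\|^{p+2}$ beats $\epsilon\|\nabla u\|^2$, and you then resort to a phase-space dichotomy. But the inequality never needs to be closed pointwise on all of phase space --- only along trajectories starting in the fixed bounded set $B$, and along such trajectories there is an a priori \emph{uniform-in-time} bound that you overlooked: by the energy equality \eqref{energy_eq}, $E(t)$ is non-increasing, so \eqref{E_inf} gives $\|\nabla u(t)\|^2+\|u_t(t)\|^2\leq C_K$ for all $t\geq 0$, with $C_K$ depending only on $K=\sup_B\|\cdot\|_{\VV\times\HH}$. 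With this bound, even your own ``crude'' Young estimate closes: $\frac{\epsilon^2}{2\lambda_1}I_{u,p}\|\nabla u\|^2\leq \frac{\epsilon^2}{2\lambda_1}C_K^{p/2}\,(\|\nabla u\|^2+\|u_t\|^2\cdot C_K^{p/2}\cdot C_K^{-p/2})$ is absorbed by the coercive and damping terms once $\epsilon\lesssim C_K^{-p/2}$. This is precisely what the paper does in \eqref{V_estimate_ineq4}, absorbing $\|\nabla u\|^{2p}\|u_t\|^2\leq C_K^p\|\nabla u\|^p\|u_t\|^2$ into the damping after choosing $\epsilon<\frac{\lambda_1-\mu_0}{8C_K^p}$. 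Letting $\epsilon$ depend on $B$ is harmless: dissipativity requires only the \emph{radius} $R$ to be uniform, and Gronwall applied to $\frac{d}{dt}E_\epsilon+\epsilon E_\epsilon\leq C_p\epsilon$ gives $E_\epsilon(t)\leq C_K e^{-\epsilon t}+C_p$, whose asymptotic level $C_p$ is independent of $B$ because the forcing constant scales linearly in $\epsilon$; only the entry time $t_0$ inherits the $B$-dependence, which the definition allows.

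The overdamped branch of your dichotomy, as stated, would also fail. The inequality $\frac{d}{dt}E\lesssim -E^{1-p/2}+C$ cannot hold pointwise: by \eqref{energy_eq}, $\frac{d}{dt}E=-I_{u,p}\|u_t\|^2$, which vanishes whenever $u_t=0$ regardless of how large $\|\nabla u\|$ is (take any initial data with $u_1=0$), so any such decay must come from an integrated-in-time argument, which you do not supply. Moreover, the standard rigorous implementation of your heuristic ``large forcing $\Delta u-f(u)$ makes $\|u_t\|$ build up'' is exactly the multiplier $\epsilon u$, i.e., the term $\epsilon(\|\nabla u\|^2+(f(u),u))$ in $\frac{d}{dt}(u,u_t)$ --- which brings back the very cross term you were trying to avoid. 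So the dichotomy is circular unless supplemented by the missing a priori bound, and once you have that bound the dichotomy is unnecessary. In short: right functional, right obstacle, but the resolution is the elementary observation that energy monotonicity bounds $I_u(t)$ uniformly along the trajectory, not a dynamical decomposition of phase space.
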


\begin{proof}
  Denote $K= \sup_{(u,v)\in B} \|(u,v)\|_{\VV\times\HH} < \infty$. Multiplying equation \eqref{Problem} by $u_t + \epsilon u$ for some $\epsilon>0$ small and integrating over $\Omega$, we have
    \begin{align}\label{V_estimate}
        \begin{split}
            \frac{d}{dt}E_{\epsilon}(t) + \epsilon E_{\epsilon}(t) &+ \effitp\|u_t(t)\|^2 + \frac{\epsilon}{2}\|\nabla u(t)\|^2 \\
            = \frac{3\epsilon}{2}\|u_t(t)\|^2 &+\epsilon^2(u_t(t),u(t)) + \epsilon\into F(u(t))dx - \epsilon(f(u(t)),u(t)) \\
            &- \epsilon\effitp(u_t(t),u(t)),
        \end{split}
    \end{align}
    where $E_{\epsilon}(t) = \frac{1}{2}\effit + \int_{\Omega} F(u(t)) dx + \epsilon(u(t),u_t(t))$. By \eqref{E_inf} we know that if $\epsilon<\epsilon_1$ for some $\epsilon_1$ small
    \begin{align}
            E_{\epsilon}(t) 
            &= E(t) + \epsilon(u(t),u_t(t))\geq \frac{1}{2}\left(1-\frac{\mu_0}{\lambda_1}\right)(\|\nabla u\|^2 + \|u_t\|^2) - C + \epsilon(u(t),u_t(t)) \nonumber\\
            &\geq \frac{1}{4}\left(1-\frac{\mu_0}{\lambda_1}\right)(\|\nabla u\|^2 + \|u_t\|^2) - C.\label{V_inf}
    \end{align}
    Let us estimate the right hand side of \eqref{V_estimate} term by term. Firstly, it holds that
    \begin{equation}\label{V_estimate_ineq1}
        \frac{3\epsilon}{2}\|u_t(t)\|^2 \leq C_p\epsilon^{\frac{p+2}{p}} + \frac{1}{4}\|u_t\|^{p+2},
    \end{equation}
    and
    \begin{align}\label{V_estimate_ineq2}
        \begin{split}
            \epsilon^2|(u_t(t),u(t))| &\leq \lambda_1^{-\frac{1}{2}}\epsilon^2\|u_t\|\|\nabla u\| \leq \frac{\epsilon^2}{2\sqrt{\lambda_1}}(\|u_t\|^2\|\nabla u\|^p+\|\nabla u\|^{2-p}) \\
            &\leq \frac{\epsilon^2}{2\sqrt{\lambda_1}}(\|u_t\|^2\|\nabla u\|^p+\|\nabla u\|^2+C_p) \\
            &\leq \frac{1}{4}\|u_t\|^2\|\nabla u\|^p + \frac{\epsilon}{8}(1-\frac{\mu_0}{\lambda_1})\|\nabla u\|^2 + C_p\epsilon,
        \end{split}
    \end{align}
    if $\epsilon<\epsilon_2$ for some $\epsilon_2$ small. For $|s|\geq M$, we can infer from \eqref{df_inf} that
    \begin{align*}
            \left.\left( \frac{1}{2}\mu_0 s^2 + F(s) \right)\right|_M^s = \int_M^s(f(\tau)+\mu_0 \tau)d\tau \leq (f(s)+\mu_0 s)(s-M),
    \end{align*}
    which implies
    \begin{equation*}
        F(s)\leq f(s)s + \frac{\mu_0}{2}s^2 + C\quad \textrm{for all }|s|>M.
    \end{equation*}
    Hence,
    \begin{align}
            \epsilon\into F(u)dx - \epsilon(f(u),u) 
            &\leq \epsilon\Big(\int_{\{|u|>M\}} (F(u)-f(u)u)dx +\int_{\{|u|\leq M\}} (F(u)-f(u)u)dx\Big) \nonumber\\
            &\leq \epsilon\int_{\{|u|>M\}} \left(\frac{\mu_0}{2}|u|^2 + C\right) + \epsilon\int_{\Omega} C_M \nonumber\\
            &\leq \frac{\mu_0\epsilon}{2\lambda_1}\|\nabla u\|^2 + C\epsilon.\label{V_estimate_ineq3}
    \end{align}
Noticing that $E(t)$ is non-increasing, we know from \eqref{E_inf} that
    \begin{equation*}
        \|\nabla u(t)\|^2+\|u_t(t)\|^2 \leq C_K\textrm{ and } E_\epsilon(t)\leq C_K,\quad \textrm{for all } t\geq 0,
    \end{equation*}
    where $C_K>0$ is a constant depending on $K$.
    Setting $\epsilon<\epsilon_3=\min\{\frac{1}{4}\lambda_1^{\frac{1}{2}}, \frac{\lambda_1-\mu_0}{8C_K^p}\}$, it follows from Young's inequality that
    \begin{align}\label{V_estimate_ineq4}
        \begin{split}
            &\quad\epsilon\effitp(u_t(t),u(t))
            \leq \epsilon\lambda_1^{-\frac{1}{2}} \|\nabla u\|^{p+1}\|u_t\| + \epsilon\lambda_1^{-\frac{1}{2}} \|\nabla u\|\|u_t\|^{p+1} \\
            &\leq \frac{\epsilon}{8}\left(1-\frac{\mu_0}{\lambda_1}\right) \|\nabla u\|^2 + \frac{2\epsilon}{\lambda_1}\left(1-\frac{\mu_0}{\lambda_1}\right)^{-1} \|\nabla u\|^{2p}\|u_t\|^2 \\&\qquad\qquad+ \epsilon\lambda_1^{-\frac{1}{2}} \left(\frac{1}{p}\|\nabla u\|^p\|u_t\|^2 + \frac{p-1}{p}\|u_t\|^{p+2}\right) \\
            &\leq \frac{\epsilon}{8}\left(1-\frac{\mu_0}{\lambda_1}\right) \|\nabla u\|^2 + \frac{1}{2}\effip\|u_t\|^2.
        \end{split}
    \end{align}
    Inserting \eqref{V_estimate_ineq1}-\eqref{V_estimate_ineq4} into \eqref{V_estimate}, we conclude that
    \begin{equation*}
        \frac{d}{dt}E_{\epsilon}(t) + \epsilon E_{\epsilon}(t) + \frac{1}{4}\|\nabla u(t)\|^p\|u_t(t)\|^2 + \frac{1}{2}\|u_t\|^{p+2} + \frac{\epsilon}{4}\left(1-\frac{\mu_0}{\lambda_1}\right) \|\nabla u\|^2 \leq C_p\epsilon,
    \end{equation*}
    whenever $\epsilon<\min\{\epsilon_1,\epsilon_2,\epsilon_3\}$ is fixed. Hence, by the Gronwall lemma, we get
    \begin{equation*}
        E_{\epsilon}(t)\leq E_{\epsilon}(0)e^{-\epsilon t} + C(1-e^{-\epsilon t}) \leq C_Ke^{-\epsilon t} + C,
    \end{equation*}
which, by choosing $t_0 = \epsilon^{-1}\ln(C_K)$, implies
    \begin{equation*}
        E_{\epsilon}(t)\leq C+1\quad\textrm{for all } t\geq t_0.
    \end{equation*}
This, together with \eqref{V_inf}, yields the dissipation. 
\end{proof}
  
As mentioned in the introduction, we can not deal with the damping term directly since it is neither compact nor contractive. To overcome this difficulty, we would make a decomposition and prove the asymptotic smoothness based on Propositions \ref{asym_smooth} and \ref{attractor_ex}.
    \begin{prop}[\cite{MR3408002}]\label{asym_smooth}
        Let $S(t)$ be an evolution operator in a Banach space $X$. Assume that for each $t>0$ there exists a decomposition $S(t)=S^{(1)}(t)+S^{(2)}(t)$, where $S^{(2)}(t)$ is a mapping in $X$ satisfying that for any bounded set $B$ 
        \begin{equation}\label{S2_cond}
            r_B(t)=\sup\left\{\|S^{(2)}(t)x\|_X:x\in B\right\}\to 0 \text{ as } t\to\infty,
        \end{equation}
        and $S^{(1)}(t)$ is compact in the sense that for each $t>0$ the set $S^{(1)}(t)B$ is a relatively compact set in $X$ for every $t>0$ large enough and every bounded forward invariant set $B$ in $X$. Then $S(t)$ is asymptotically smooth.
    \end{prop}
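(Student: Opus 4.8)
The plan is to reduce asymptotic smoothness to a quantitative statement about the Kuratowski measure of noncompactness $\alpha(\cdot)$ and then exploit the subadditivity of $\alpha$ along the decomposition $S(t)=S^{(1)}(t)+S^{(2)}(t)$. Recall the classical criterion (Hale/Ladyzhenskaya, see also \cite{MR3408002}): the system $(X,S(t))$ is asymptotically smooth if and only if $\alpha(S(t)B)\to 0$ as $t\to\infty$ for every bounded forward invariant set $B\subset X$. So it suffices to fix such a $B$ and show that $\alpha(S(t)B)\to 0$. The advantage of this route is that it sidesteps any explicit construction of the attracting compact set and turns the hypotheses into a one-line estimate.

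First I would record the elementary inclusion $S(t)B\subset S^{(1)}(t)B+S^{(2)}(t)B$, which holds because $S(t)x=S^{(1)}(t)x+S^{(2)}(t)x$ for each $x\in B$, so every image lands in the Minkowski sum of the two pieces. Using monotonicity of $\alpha$ under inclusion and its subadditivity on sums, this yields
\[
\alpha(S(t)B)\le\alpha\big(S^{(1)}(t)B\big)+\alpha\big(S^{(2)}(t)B\big).
\]
Next I would estimate the two terms separately. For $t$ large enough the set $S^{(1)}(t)B$ is relatively compact by hypothesis, so $\alpha\big(S^{(1)}(t)B\big)=0$. For the second term, condition \eqref{S2_cond} places $S^{(2)}(t)B$ inside the ball of radius $r_B(t)$ about the origin, whence $\operatorname{diam}\big(S^{(2)}(t)B\big)\le 2r_B(t)$ and therefore $\alpha\big(S^{(2)}(t)B\big)\le 2r_B(t)$. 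Combining gives $\alpha(S(t)B)\le 2r_B(t)\to 0$ as $t\to\infty$, which is exactly what the criterion demands.

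The computation is routine once the correct criterion is invoked, so the only genuine point of care is matching the quantifiers in the definition: the compactness of $S^{(1)}(t)$ is assumed only on bounded \emph{forward invariant} sets, and the decay $r_B(t)\to 0$ only for bounded sets, so asymptotic smoothness (and the accompanying $\alpha$-criterion) must be phrased for bounded forward invariant $B$, in keeping with the convention of \cite{MR3408002}. If one wished to avoid the measure of noncompactness entirely, the natural substitute is to take the $\omega$-limit set $\omega(B)=\bigcap_{s\ge 0}\overline{\bigcup_{t\ge s}S(t)B}$ and verify it is nonempty, compact, and attracts $B$; but establishing its compactness again rests on controlling the noncompactness of $S(t)B$, so the estimate above remains the essential ingredient, and the $\alpha$-formulation is the most economical way to present it.
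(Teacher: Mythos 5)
Your proof is correct. The paper does not prove this proposition at all---it is quoted from \cite{MR3408002}---so there is no internal argument to compare against; your reduction to the Kuratowski measure-of-noncompactness criterion, followed by the estimate $\alpha(S(t)B)\le \alpha\big(S^{(1)}(t)B\big)+\alpha\big(S^{(2)}(t)B\big)\le 0+2r_B(t)\to 0$ for bounded forward invariant $B$ and $t$ large, is precisely the classical proof of this decomposition criterion as it appears in the cited literature (Hale, Chueshov--Lasiecka). The one point worth acknowledging is that the $\alpha$-criterion you invoke is itself a nontrivial theorem---one still has to construct $\omega(B)$ and prove it is nonempty, compact, and attracting---but it is standard, and you correctly flag both this and the quantifier matching (compactness of $S^{(1)}(t)B$ and the decay of $r_B(t)$ are only needed on bounded forward invariant sets, which is all the criterion requires).
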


    \begin{prop}[\cite{MR2438025}]\label{attractor_ex}
        Let $(X,S(t))$ be a dissipative dynamical system in a complete metric space $X$. Then $(X,S(t))$ possesses a compact global attractor if and only if $(X,S(t))$ is asymptotically smooth.
    \end{prop}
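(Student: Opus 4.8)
The plan is to prove the two implications of the equivalence separately, the forward one being essentially immediate and the reverse one carrying all the analytic content. For the forward direction, suppose $(X,S(t))$ admits a compact global attractor $\mathscr{A}$. By definition $\mathscr{A}$ is compact and attracts every bounded set, so in particular for any bounded forward-invariant set $B$ one has $\operatorname{dist}_X(S(t)B,\mathscr{A})\to 0$ as $t\to\infty$. Choosing the compact set required in the definition of asymptotic smoothness to be $\mathscr{A}$ itself (intersected with $\overline{B}$ if one insists it lie inside $B$) then yields asymptotic smoothness at once.

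For the reverse direction, assume dissipativity together with asymptotic smoothness. First I would fix the bounded absorbing ball $\mathcal{B}_0$ furnished by dissipativity and enlarge it to a bounded, closed, forward-invariant absorbing set by setting $B=\overline{\bigcup_{t\geq t_0}S(t)\mathcal{B}_0}$ for a suitable $t_0$; dissipativity guarantees that $B$ is bounded and that $S(t)B\subseteq B$. The candidate attractor is the $\omega$-limit set
\[
  \mathscr{A}=\omega(B)=\bigcap_{s\geq 0}\overline{\bigcup_{t\geq s}S(t)B}.
\]
I would then establish, in order: (i) $\mathscr{A}$ is nonempty and compact; (ii) $\mathscr{A}$ is strictly invariant, i.e.\ $S(t)\mathscr{A}=\mathscr{A}$ for all $t\geq 0$; and (iii) $\mathscr{A}$ attracts every bounded subset of $X$.

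Step (i) is exactly where asymptotic smoothness is used: by hypothesis there is a compact set $J\subseteq B$ attracting $B$, and the standard $\omega$-limit-set argument shows $\mathscr{A}\subseteq J$, so that $\mathscr{A}$ is compact, while nonemptiness follows because any sequence $S(t_n)x_n$ with $t_n\to\infty$ and $x_n\in B$ has, by the attraction of $B$ to the compact $J$, a convergent subsequence whose limit lies in $\mathscr{A}$. Step (iii) is then a soft consequence of (i) and (ii) combined with the absorbing property: an arbitrary bounded set is absorbed into $B$ in finite time, after which it is attracted to $\mathscr{A}$.

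The main obstacle is the invariance in Step (ii), specifically the inclusion $\mathscr{A}\subseteq S(t)\mathscr{A}$. The opposite inclusion $S(t)\mathscr{A}\subseteq\mathscr{A}$ is routine from the continuity of $S(t)$ and the definition of the $\omega$-limit set. For the reverse inclusion one must, given $y\in\mathscr{A}$ and $t>0$, produce a preimage $z\in\mathscr{A}$ with $S(t)z=y$. Writing $y=\lim_k S(t_{n_k})x_{n_k}$ and considering the shifted points $S(t_{n_k}-t)x_{n_k}$ for large $k$, one invokes the precompactness furnished by asymptotic smoothness to extract a further convergent subsequence $S(t_{n_k}-t)x_{n_k}\to z\in\mathscr{A}$, after which continuity of $S(t)$ gives $S(t)z=y$. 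This construction of bounded complete (backward) trajectories is the crux of the argument and is precisely the point at which asymptotic smoothness, rather than mere dissipativity, is indispensable.
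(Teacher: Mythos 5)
The paper does not prove this proposition at all: it is quoted from the cited reference (Chueshov--Lasiecka) as a known criterion, so there is no internal argument to compare yours against. Judged on its own, your proof is the standard textbook argument and is correct. The forward direction works as you state, with the caveat you already flag: when the definition of asymptotic smoothness requires the attracting compact set to lie in $\overline{B}$, one takes $\mathscr{A}\cap\overline{B}$, and your subsequence argument (forward invariance of $B$ plus compactness of $\mathscr{A}$) shows this intersection is nonempty and attracts $B$. In the reverse direction, the construction $\mathscr{A}=\omega(B)$ for a closed, bounded, forward-invariant absorbing set $B$, compactness via $\omega(B)\subseteq J$, invariance via the shifted sequence $S(t_{n_k}-t)x_{n_k}$ with a convergent subsequence extracted from the attraction to the compact set $J$, and attraction of arbitrary bounded sets via absorption into $B$, is exactly the classical proof of Hale and of Chueshov--Lasiecka. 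Two points of bookkeeping only: attraction of $B$ to $\omega(B)$ in your Step (iii) is not a formal consequence of (i) and (ii) but its own short contradiction argument --- if $\mathrm{dist}(S(t_n)x_n,\omega(B))\geq\epsilon$ along some $t_n\to\infty$, then attraction to $J$ and compactness of $J$ give a convergent subsequence whose limit lies in $\omega(B)$ by the sequential characterization of $\omega$-limit sets, a contradiction; and in Step (ii) the limit $z$ belongs to $\omega(B)$ for precisely the same sequential reason, which is worth saying explicitly. Neither point is a gap; both are routine.
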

    
Given $\lambda>-\lambda_1$, we decompose $u=v+w$, where
    \begin{equation}\label{v_eq}
        \begin{cases}
            v_{tt} - \Delta v + \frac{1}{2}\|u_t\|^p u_t - \frac{1}{2}\|w_t\|^p w_t + \left(\|\nabla u\|^p + \frac{1}{2}\|u_t\|^p\right) v_t +\lambda v= 0, \\
          v|_{\partial \Omega} = 0, \\
          v(0) = u_0, v_t(0) = u_1,
        \end{cases}
    \end{equation}
    and
    \begin{equation}\label{w_eq}
        \begin{cases}
            w_{tt} - \Delta w + \left(\frac{1}{2}\|w_t\|^p + \|\nabla u\|^p + \frac{1}{2}\|u_t\|^p\right)w_t + f(u)-\lambda u +\lambda w= 0, \\
            w|_{\partial \Omega} = 0, \\
w(0) = w_t(0) = 0.
        \end{cases}
    \end{equation}
We point out that the parameter $\lambda$ is introduced in order to weaken the conditions on $f$ in Section \ref{sec_regularity-decay}. Lemmas \ref{v_decay} and \ref{w_regularity} below are established under Assumption \ref{f_assum1}.

    \begin{lemma}\label{v_decay}
        Let $B_0$ be a bounded subset of $\VV\times\HH$. Then any solution of \eqref{v_eq} starting from $B_0$ satisfies that
        \begin{equation*}
            \|\nabla v\|^2 + \|v_t\|^2 \rightarrow 0 \text{ as } t\to\infty,
        \end{equation*}
        In particular, the decay is uniform with respect to $(u_0, u_1)\in B_0$.
    \end{lemma}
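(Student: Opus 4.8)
The plan is to read \eqref{v_eq} as a damped wave equation with coercive potential $\lambda v$, to produce from a single energy identity a dissipation functional that is nonnegative thanks to the monotonicity of the nonlocal damping, and then to upgrade the resulting monotonicity of the energy into genuine decay. I expect the upgrade to be the only hard part, because the damping coefficient degenerates whenever the full trajectory $u$ comes close to the origin.

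First I would set
\[
E_v(t)=\tfrac12\big(\|v_t\|^2+\|\nabla v\|^2+\lambda\|v\|^2\big).
\]
Since $\lambda>-\lambda_1$ and $\|v\|^2\le\lambda_1^{-1}\|\nabla v\|^2$, this functional is equivalent to $\|\nabla v\|^2+\|v_t\|^2$, so it suffices to prove $E_v(t)\to0$. Multiplying \eqref{v_eq} by $v_t$ and integrating over $\Omega$ gives
\[
\frac{d}{dt}E_v(t)+\Big(\|\nabla u\|^p+\tfrac12\|u_t\|^p\Big)\|v_t\|^2+\tfrac12\big(\|u_t\|^pu_t-\|w_t\|^pw_t,\,v_t\big)=0.
\]
The first dissipation term is manifestly nonnegative. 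For the second I would use $u_t-w_t=v_t$ together with the observation that $\xi\mapsto\|\xi\|^p\xi$ is the gradient of the convex functional $\xi\mapsto\frac{1}{p+2}\|\xi\|^{p+2}$ on $\HH$ and is therefore monotone; hence $\big(\|u_t\|^pu_t-\|w_t\|^pw_t,\,u_t-w_t\big)\ge0$ and the whole dissipation is nonnegative. Consequently $E_v$ is non-increasing, converges to some $L\ge0$, and integration yields a finite-dissipation bound in which both nonnegative terms are integrable on $(0,\infty)$. For the uniform bounds I would invoke Theorem \ref{dissipativity}: the full trajectory $(u,u_t)$ is bounded in $\VV\times\HH$ uniformly for $(u_0,u_1)\in B_0$ and $t\ge0$; since $E_v$ is non-increasing, $(v,v_t)$ is bounded, hence so is $(w,w_t)=(u-v,u_t-v_t)$, and in particular $\|\nabla u\|^p+\frac12\|u_t\|^p$ and $E_v(0)$ are bounded uniformly over $B_0$. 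It then remains only to establish the uniform decay $L=0$.

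The crux—and the step I expect to be the main obstacle—is to rule out $L>0$. The damping coefficient $\|\nabla u\|^p+\frac12\|u_t\|^p$ is not bounded below: it degenerates exactly when $u$ enters a neighbourhood of the origin, and even the sharper monotonicity estimate $\big(\|u_t\|^pu_t-\|w_t\|^pw_t,v_t\big)\ge c_p(\|u_t\|^p+\|w_t\|^p)\|v_t\|^2$ only lowers the dissipation by the still-degenerate weight $\|\nabla u\|^p+\|u_t\|^p+\|w_t\|^p$. Thus neither an exponential-decay multiplier (testing with $v_t+\epsilon v$ leaves an uncontrollable $-\epsilon\|v_t\|^2$) nor a direct Nakao-type recursion closes. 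My plan is therefore to argue by contradiction and compactness: if uniform decay failed there would exist $\delta>0$, data $(u_0^n,u_1^n)\in B_0$ and times $t_n\to\infty$ with $E_{v^n}(t_n)\ge\delta$, so by monotonicity $E_{v^n}\ge\delta$ on all of $[0,t_n]$ while the total dissipation stays bounded by $E_{v^n}(0)\le C(B_0)$. Translating in time and passing to a limit trajectory—using the uniform energy bounds for weak-$*$ compactness and the equicontinuity in $t$ of the \emph{scalar} nonlocal coefficients $\|\nabla u^n\|,\|u_t^n\|,\|w_t^n\|$ to extract their a.e. convergence—I would obtain a limit with constant energy $\ge\delta$ and vanishing total dissipation. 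The nonnegativity of both dissipation terms then forces $v_t\equiv0$ along the limit wherever the coefficient is positive, after which the equipartition of energy for the remaining linear wave dynamics (with coercive potential $\lambda v$) must be exploited to show that such a frozen solution of positive energy cannot persist, yielding the contradiction. The genuinely delicate points are the passage to the limit in the nonlocal scalar coefficients and the equipartition argument on the possibly time-dependent degenerate set; I note that if one knew a priori that $u$ stays away from the origin (as happens, e.g., when $f(0)\neq0$) the damping would be non-degenerate and decay would follow directly, but under Assumption \ref{f_assum1} alone this is not available, which is exactly what forces the soft argument.
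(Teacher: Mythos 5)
Your overall architecture (the energy identity for $E_v$, nonnegativity of the dissipation via monotonicity of $\xi\mapsto\|\xi\|^p\xi$, then a compactness/contradiction argument to upgrade monotone energy into uniform decay) matches the paper's, but there is a genuine gap at exactly the step you flag as delicate, and it originates in a misreading of the monotonicity estimate. You dismiss the bound
\begin{equation*}
  \big(\|u_t\|^pu_t-\|w_t\|^pw_t,\,v_t\big)\;\geq\; c_p\big(\|u_t\|^p+\|w_t\|^p\big)\|v_t\|^2
\end{equation*}
as carrying a ``still-degenerate weight''. But since $v_t=u_t-w_t$, the power mean inequality gives $\|u_t\|^p+\|w_t\|^p\geq 2^{1-p}(\|u_t\|+\|w_t\|)^p\geq 2^{1-p}\|v_t\|^p$, so your own inequality already yields the \emph{non-degenerate} lower bound $c\|v_t\|^{p+2}$ for the dissipation --- no coefficient involving $u$ appears at all. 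This is precisely what the paper uses, quoting the monotonicity inequality (Lemma 2.2 in \cite{MR4064014}): $(\|u_t\|^pu_t-\|w_t\|^pw_t,v_t)\geq C\|v_t\|^{p+2}$. Because you discard it, your endgame only concludes $v_t\equiv 0$ on the set where the damping coefficient is positive, and you must then invoke an unspecified ``equipartition'' argument for the ``remaining linear wave dynamics'' on the degenerate set. That step cannot work as described: the undamped equation $v_{tt}-\Delta v+\lambda v=0$ conserves energy and admits standing waves of arbitrary positive energy, so no equipartition argument can rule out a frozen positive-energy limit. (Moreover, on the degenerate set the limiting equation is not linear: if the limiting $u$ vanishes, then $w_t=-v_t$ and the equation becomes $v_{tt}-\Delta v+\tfrac12\|v_t\|^pv_t+\lambda v=0$; the contradiction there comes exactly from the monotone damping term you threw away.)

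With the quantitative bound restored, the soft machinery becomes unnecessary and the argument closes along the paper's lines: vanishing dissipation over a window of fixed length $T$ forces $\|v^n_t\|\to0$ in $L^{p+2}(0,T)$ regardless of how $u^n$ behaves; Aubin--Lions compactness then shows the weak limit $v$ solves $-\Delta v+\lambda v=0$, hence $v=0$ (here $\lambda>-\lambda_1$ enters); finally one must also kill the gradient part of the energy, which the paper does by multiplying the $v^n$-equation by $v^n$ and integrating over $[0,T]\times\Omega$ to obtain $\int_0^T\|\nabla v^n\|^2\,dt\to0$, whence monotonicity of the energy contradicts $E_{v^n}(T)\geq\delta$. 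Your ``equipartition'' instinct is the right one for this last multiplier step, but it has to be carried out on $v^n$ along the sequence, not on a hypothetical linear limit flow. A minor structural remark: the paper runs the contradiction on a fixed interval $[0,T]$, producing a uniform energy decrement $K(M)$ per window, which delivers the uniformity over $B_0$ somewhat more directly than your time-translation scheme, though that difference is cosmetic compared with the missing quantitative dissipation bound.
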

\begin{proof}
  Multiplying equation \eqref{v_eq} by $v_t$ and integrating over $[0,t]\times\Omega$, we get
    \begin{equation*}
        \tilde{I}_v(t) - \tilde{I}_v(0) + 2\intt (g(v_t),v_t) = 0,
    \end{equation*}
    where 
    \begin{align*}
      \tilde{I}_v&=\|\nabla v\|^2 + \|v_t\|^2+\lambda \|v\|^2,\\
        g(v_t) &= \frac{1}{2}\|u_t\|^p u_t - \frac{1}{2}\|w_t\|^p w_t + \Big(\|\nabla u\|^p + \frac{1}{2}\|u_t\|^p\Big) v_t.
      \end{align*}
Note that $c_1I_v\leq \tilde{I}_v\leq c_2I_v$ for some $c_2>c_1>0$, as $\lambda>-\lambda_1$, where $I_v$ is defined by \eqref{I_u_definition}. By the monotonicity inequality (Lemma 2.2 in \cite{MR4064014}), it holds that
\begin{align*}
  (\|u_t\|^p u_t - \|w_t\|^p w_t, v_t)\geq C\|v_t\|^{p+2},
\end{align*}
and thereby
    \begin{equation}\label{g_inf}
        (g(v_t),v_t) \geq C\|v_t\|^{p+2} + \Big(\|\nabla u\|^p + \frac{1}{2}\|u_t\|^p\Big)\|v_t\|^2 \geq 0,
    \end{equation}
    which means that $\tilde{I}_v(t)$ is monotonically decreasing. This, together with Theorem \ref{dissipativity}, tells us that $\|\nabla u\|$, $\|u_t\|$, $\|\nabla v\|$ and $\|v_t\|$ are uniformly bounded for $t\in[0, \infty)$.

    To prove the lemma, we claim that, for fixed $T>0$ and any $M>0$, there exists a positive constant $K(M)$ such that
    \begin{equation*}
        \tilde{I}_v(T) - \tilde{I}_v(0) \leq -K(M),
    \end{equation*}
    whenever $(u_0,u_1)\in B_0$ and $\tilde{I}_v(T)\geq M$. Otherwise, there exist $M>0$ and a sequence $(v^n,v^n_t)$ associated with $(u_0^n,u_1^n)\in B_0$ such that
    \begin{align}\label{g_assum}
      \tilde{I}_{v^n}(T)\geq M~\mathrm{and}~\lim_{n\rightarrow \infty}\int_0^T(g(v^n),v^n_t)dt=0.
    \end{align}
    We infer from \eqref{g_inf} and \eqref{g_assum} that
    \begin{equation}\label{vn_t_converge}
        \|v^n_t\|\to 0 \trm{ in } L^{p+2}(0,T).
    \end{equation}
Besides, since $\{(v^n, v^n_t)\}$ is bounded in $L^{\infty}(0,T;\VV\times \HH)$, we can find a function $v$ such that, up to a subsequence
    \begin{gather}
        v^n \stackrel{*}{\rightharpoonup} v \trm{ in } L^{\infty}(0,T;\VV),\nonumber\\
        v^n \to v \trm{ a.e. in } [0,T]\times\Omega,\nonumber\\
        v^n \to v \trm{ in } C([0,T];L^2(\Omega)) \label{vn_converge}
    \end{gather}
    and also
    \begin{equation*}
        v^n_t\to 0, \trm{ a.e. in } [0,T]\times\Omega,
    \end{equation*}
where we have used Aubin-Lions Lemma in \eqref{vn_converge}. Since
    \begin{equation*}
        \begin{aligned}
            2\|g(v^n_t)\| 
            &\leq \left\| \|u^n_t\|^p u^n_t - \|u^n_t-v^n_t\|^p(u^n_t-v^n_t) \right\| + \big\| (2\|\nabla u^n\|^p+\|u^n_t\|^p) v^n_t \big\| \\
            &\leq \left| \|u^n_t\|^p-\|u^n_t-v^n_t\|^p \right|\|u^n_t\| + \|u^n_t-v^n_t\|^p\|v^n_t\| + (2\|\nabla u^n\|^p+\|u^n_t\|^p)\|v^n_t\| \\
            &\leq C_{B_0}\|v^n_t\|,
        \end{aligned}
    \end{equation*}
we also have $g(v^n_t)\to 0$ a.e. in $[0,T]\times\Omega$. Therefore, taking $n$ to $\infty$ in \eqref{v_eq}, one can see the limit function $v$ satisfies
    \begin{equation*}
        \begin{cases}
            -\Delta v+\lambda v = 0, & \mathrm{in}~\Omega\times[0, T],\\
            v|_{\partial\Omega} = 0, 
        \end{cases}
    \end{equation*}
which implies that $v = 0$ in $H_0^1(\Omega)$ for each $t\in[0, T]$. Furthermore, multiplying the equation of $v^n$ by $v^n$ and integrating over $[0,T]\times\Omega$, we know
    \begin{equation*}
        \int_0^T \|\nabla v^n\|^2dt = (v^n_t(0),v^n(0)) - (v^n_t(T),v^n(T)) + \int_0^T \|v^n_t\|^2 - (g(v^n_t), v^n)dt-\lambda\|v^n\|^2,
    \end{equation*}
which, together with \eqref{vn_t_converge} and \eqref{vn_converge}, infers that $\int_0^T \|\nabla v^n\|^2dt \to 0$ as $n$ tends to $\infty$. Therefore, it follows from the monotonicity of $\tilde{I}_v$ that
    \begin{equation*}
        M \leq \limsup_{n\to\infty} \tilde{I}_{v^n}(T) \leq \limsup_{n\rightarrow\infty}\frac{1}{T}\int_0^T \tilde{I}_{v^n}(t) dt=0,
    \end{equation*}
    which is a contraction.

    As the conclusion, the claim is true and, for any $\epsilon>0$, there exists $N_{\epsilon}\in\mathbb{N}^+$ such that
    \begin{equation*}
        0\leq \tilde{I}_v(t) \leq \epsilon \quad\textrm{for any } t\geq N_{\epsilon}T.
    \end{equation*}
By the equivalence of $\tilde{I}_v$ and $I_v$, we complete the proof.  
  \end{proof}
  
    \begin{lemma}\label{w_regularity}
       For each $t>0$, the solution $(w(t),w_t(t))$ of \eqref{w_eq} at $t$ is bounded in $H^{\beta+1}(\Omega)\times H^{\beta}(\Omega)$ for some $\beta>0$ small enough, where the bound is uniform with respect to $(u_0,u_1)\in B_0$. 
    \end{lemma}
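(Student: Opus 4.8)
The plan is to run a fractional energy estimate on the $w$-equation \eqref{w_eq} with multiplier $A^{\beta}w_t$, where $A=-\Delta$ with Dirichlet boundary conditions and $\beta>0$ is to be chosen small. Writing $\kappa(t)=\tfrac12\|w_t\|^p+\|\nabla u\|^p+\tfrac12\|u_t\|^p$ for the (nonnegative, uniformly bounded, by Theorem \ref{dissipativity}) damping coefficient, I would introduce the weighted energy $\mathcal{E}_\beta(t)=\tfrac12\|A^{(1+\beta)/2}w\|^2+\tfrac12\|A^{\beta/2}w_t\|^2$, which is exactly $\|w\|^2_{H^{1+\beta}}+\|w_t\|^2_{H^{\beta}}$ up to constants and is the quantity I must bound. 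Pairing \eqref{w_eq} with $A^{\beta}w_t$ and integrating over $[0,t]$ gives, after absorbing the lower-order term $\lambda w$ into an equivalent energy (legitimate since $\lambda>-\lambda_1$),
\begin{equation*}
  \mathcal{E}_\beta(t)+\int_0^t\kappa(s)\|A^{\beta/2}w_t\|^2\,ds = -\int_0^t\big(f(u)-\lambda u,\,A^{\beta}w_t\big)\,ds,
\end{equation*}
where the initial data $w(0)=w_t(0)=0$ make $\mathcal{E}_\beta(0)=0$. The damping integral on the left carries the favourable sign, so the whole problem reduces to controlling the single forcing integral on the right uniformly over $B_0$.

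Since the multiplier $A^{\beta}w_t$ costs exactly one more derivative than I control, I would not estimate the forcing term directly but integrate by parts in time, trading $w_t$ for $w$: the integral becomes $-\big(f(u(t))-\lambda u(t),A^{\beta}w(t)\big)+\int_0^t\big(f'(u)u_t-\lambda u_t,\,A^{\beta}w\big)\,ds$, using $w(0)=0$ to kill the lower endpoint. The boundary term is harmless: by the critical growth \eqref{f_cond1} and $\VV\hookrightarrow L^6(\Omega)$ one has $\|f(u(t))-\lambda u(t)\|$ uniformly bounded, while $\|A^{\beta}w(t)\|\le\lambda_1^{(\beta-1)/2}\|A^{(1+\beta)/2}w(t)\|$ since $\beta<1$, so Young's inequality lets me absorb a fraction of $\mathcal{E}_\beta(t)$ into the left-hand side.

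The hard part is the time-integral term $\int_0^t(f'(u)u_t,A^{\beta}w)\,ds$, and this is where the critical growth of $f$ bites. The natural bound $\|f'(u)u_t\|_{6/5}\le C\|f'(u)\|_{3}\|u_t\|\le C\|u_t\|$ pairs $L^{6/5}$ against $A^{\beta}w\in L^{6/(1+2\beta)}$, which falls short of the dual exponent $6$ by precisely the amount the critical Sobolev exponent refuses to give; a direct H\"older/absorption estimate therefore just fails to close for any $\beta>0$. My plan is to exploit the structure of the damping rather than fight it: weight the estimate so that the factor $\|\nabla u\|^p$ (bounded below inside $\kappa$) is matched against the dissipation integral $\int_0^\infty(\|\nabla u\|^p+\|u_t\|^p)\|u_t\|^2\,ds<\infty$ coming from the energy equality \eqref{energy_eq}. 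Concretely, I would split $(f'(u)u_t,A^{\beta}w)$ by Young's inequality into an $\epsilon$-multiple of $\|A^{\beta/2}w_t\|^2$ weighted by $\kappa(s)$ (absorbable into the dissipation integral on the left) plus a remainder that is integrable in time after using the dissipation bound, the uniform bounds on $\|\nabla u\|,\|u_t\|$, and the interpolation $\|A^{\beta}w\|_{L^{q}}\le C\,\mathcal{E}_\beta^{1/2}$ at the largest admissible $q<6$. This produces a differential inequality for $\mathcal{E}_\beta$ whose inhomogeneity is only locally (in time) integrable but not obviously summable, which is exactly why the naive Gronwall argument is replaced by the extended discrete-like Gronwall lemma: one propagates the bound across consecutive time windows, each step absorbing the borderline contribution into the previous window's dissipation, so that the accumulated constant stays finite on every bounded interval.

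Carrying this out yields $\mathcal{E}_\beta(t)\le C(t)$ with $C(t)$ depending on $t$ but uniform over $(u_0,u_1)\in B_0$, which is precisely the asserted boundedness of $(w(t),w_t(t))$ in $H^{\beta+1}(\Omega)\times H^{\beta}(\Omega)$; the bound is permitted to blow up as $t\to0^+$, consistent with the fact that $w$ starts from zero regularity. I expect the single genuine obstacle to be the borderline forcing term of the previous paragraph: every ingredient except the treatment of $\int_0^t(f'(u)u_t,A^{\beta}w)\,ds$ is routine, and the success of the whole scheme hinges on choosing $\beta$ small enough and weighting the energy so that this critical term is either absorbed by the damping dissipation or rendered time-summable through the discrete Gronwall iteration.
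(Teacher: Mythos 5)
Your structural setup --- commuting the equation with a fractional power of $A$ (testing with $A^{\beta}w_t$ is the same as the paper's substitution $\tilde w=A^{\beta/2}w$) --- coincides with the paper's, and your diagnosis of the obstruction is exactly right: with only the energy-level bounds $u\in L^\infty(0,T;\VV)$, $u_t\in L^\infty(0,T;\HH)$, the critical growth \eqref{f_cond1} makes the forcing term fail to close for \emph{any} $\beta>0$. But the resolution you then propose does not fill this gap, and the paper's proof rests on an ingredient your argument never invokes: a Strichartz estimate for the wave equation (Corollary 1.2 in \cite{MR2566711}), which upgrades the solution to $u\in L^{7/2}(0,T;L^{14}(\Omega))$ with bound $C(T,B_0)$. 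Writing $f_*(s)=f(s)-\lambda s$, the paper combines the energy-level bound $\|f_*(u)\|_{W^{1,6/5}}\leq C$ with the Strichartz-level bound $\|f_*(u)\|_{L^{7/6}(0,T;L^{14/3})}\leq C$ through the interpolation $\|g\|_{H^{6/13}}\leq \|g\|_{W^{1,6/5}}^{6/13}\|g\|_{L^{14/3}}^{7/13}$ to get $f_*(u)\in L^{13/6}(0,T;H^{6/13}(\Omega))$. This positive fractional regularity of the forcing, locally integrable in time, lets the plain energy estimate for $\tilde w=A^{3/13}w$ close by an ordinary Gronwall argument --- no integration by parts in time, and no discrete-like Gronwall lemma: Lemma \ref{F_estimate_lemma} enters only later (Lemma \ref{w_regularity_new}) to make the bound global in time, not to overcome criticality.

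Why your substitute fails: after your integration by parts the problematic term is $\int_0^t(f'(u)u_t,A^{\beta}w)\,ds$, and the obstruction is the \emph{spatial} integrability of $u$. Pairing $A^{\beta}w\in H^{1-\beta}(\Omega)\hookrightarrow L^{6/(1+2\beta)}(\Omega)$ against its dual exponent forces $f'(u)u_t\in L^{6/(5-2\beta)}(\Omega)$, i.e.\ a bound on $\|u\|^2_{L^{6/(1-\beta)}}\|u_t\|$, and $L^{6/(1-\beta)}$ lies strictly above the critical exponent $6$ controlled by $\VV$. None of your proposed tools produces this: (i) the dissipation integral $\int_0^\infty(\|\nabla u\|^p+\|u_t\|^p)\|u_t\|^2\,ds<\infty$ from \eqref{energy_eq} improves time-integrability of $u_t$, not spatial integrability of $u$; (ii) Young's inequality applied to $(f'(u)u_t,A^{\beta}w)$ cannot yield an $\epsilon$-multiple of $\kappa(s)\|A^{\beta/2}w_t\|^2$, since this expression contains no $w_t$ at all --- it can only produce norms of $A^{\beta}w$ (absorbable) and of $f'(u)u_t$ (precisely the uncontrolled quantity); (iii) $\|\nabla u\|$ is \emph{not} bounded below along trajectories from an arbitrary bounded set $B_0$ --- this is a degenerate equation whose trajectories approach $0$ --- so ``matching $\|\nabla u\|^p$ against the dissipation'' is unavailable; (iv) Lemma \ref{F_estimate_lemma} requires the inhomogeneity to be uniformly locally integrable in time, which is the very thing missing. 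So the proposal is incomplete at exactly the step you yourself flag as decisive; the missing idea is the space--time (Strichartz) integrability gain for $u$, or some equivalent source of integrability above $L^6$ in space.
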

\begin{proof}
Note that $(u, u_t)$ is globally bounded in $H^1_0(\Omega)\times L^2(\Omega)$, where the bound depends on $B_0$. For any $T>0$, by virtue of the strichartz estimate (Corollary 1.2 in \cite{MR2566711}), we know
    \begin{equation}\label{w_es_strichartz}
        \begin{split}
            \|u\|_{L^{7/2}(0,T;L^{14}(\Omega))} 
            \leq &C\Big(\|u(T/2)\|_{H^1(\Omega)} + \|u_t(T/2)\|_{\HH} + \|f(u)\|_{L^1(0,T;L^2(\Omega))} \\
            &\quad+ \big\|(\|\nabla u\|^p+\|u_t\|^p)u_t \big\|_{L^1(0,T;L^2(\Omega))}\Big) \\
            \leq &C(B_0) + C\|u\|_{L^3(0,T;L^6(\Omega))}^3+ C(T,B_0) \leq C(T,B_0).
        \end{split}
    \end{equation}
Denote $f_*(s)=f(s)-\lambda s$. It follows from the condition on $f$ that
    \begin{equation*}
            \|f_*(u)\|_{W^{1,6/5}(\Omega)}
            \leq C(1+\|u\|_{H^1(\Omega)}^3),
    \end{equation*}
    and 
    \begin{equation*}
            \|f_*(u)\|_{L^{7/6}(0, T; L^{14/3}(\Omega))} 
            \leq C(1+\|u\|_{L^{7/2}(0, T; L^{14}(\Omega))}^3),
    \end{equation*}
    which, together with the interpolation inequality $\|g\|_{H^{6/13}(\Omega)}\leq \|g\|_{W^{1,6/5}(\Omega)}^{6/13} \|g\|_{L^{14/3}(\Omega)}^{7/13}$, gives
    \begin{align*}
            \|f_*(u)\|_{L^{13/6}(0, T; H^{6/13}(\Omega))}\leq \|f_*(u)\|_{L^\infty(0, T; W^{1,6/5}(\Omega))}^{6/13} \|f_*(u)\|_{L^{7/6}(0, T; L^{14/3}(\Omega))}^{7/13}\leq C(T, B_0). 
     \end{align*}
Since $D(A^s)=H^{2s}(\Omega)$ for $s< 1/4$, we can denote $\tilde{w} = A^{\frac{3}{13}}w$ and consider the equation 
    \begin{equation*}
        \tilde{w}_{tt} - \Delta \tilde{w}+\lambda\tilde{w} + \Big(\frac{1}{2}\|w_t\|^p + \|\nabla u\|^p + \frac{1}{2}\|u_t\|^p\Big)\tilde{w}_t + A^{\frac{3}{13}}f_*(u) = 0
    \end{equation*}
    with $(\tilde{w}(0),\tilde{w}_t(0)) = 0$ and Dirichlet boundary. Then we know $(\tilde{w}(t),\tilde{w}_t(t))$ is bounded in $\VV\times\HH$ by energy estimate, which implies the conclusion with $\beta = \frac{6}{13}$.
  \end{proof}
  
Define $S^{(2)}(t)(u_0,u_1) = (v(t),v_t(t))$ and $S^{(1)}(t)(u_0,u_1) = (w(t),w_t(t))$. We know that $S^{(2)}(t)$ satisfies \eqref{S2_cond} by Lemma \ref{v_decay} and $S^{(1)}(t)$ is compact for each $t>0$ by Lemma \ref{w_regularity}. Therefore, by means of Proposition \ref{asym_smooth}, $S(t)$ is asymptotically smooth. Furthermore, since $S(t)$ is bounded dissipative by Theorem \ref{dissipativity}, we obtain directly from Proposition \ref{attractor_ex} the existence of the global attractor.

    \begin{thm}\label{attractor_existence}
        Under Assumption \ref{f_assum1}, the dynamical system generated by Problem \eqref{Problem} has a global attractor $\mathscr{A}$ in $\VV\times\HH$.
    \end{thm}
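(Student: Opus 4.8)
The plan is to apply the abstract existence criterion of Proposition~\ref{attractor_ex}, which guarantees a compact global attractor as soon as the dynamical system is both dissipative and asymptotically smooth. Dissipativity has already been secured in Theorem~\ref{dissipativity}, so the entire task reduces to proving that the semigroup $S(t)$ generated by Problem~\eqref{Problem} is asymptotically smooth. For this I would verify the hypotheses of the decomposition criterion in Proposition~\ref{asym_smooth} on the phase space $\VV\times\HH$, using the splitting $u=v+w$ governed by \eqref{v_eq}--\eqref{w_eq} and setting $S^{(2)}(t)(u_0,u_1)=(v(t),v_t(t))$ and $S^{(1)}(t)(u_0,u_1)=(w(t),w_t(t))$.

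First I would confirm that this is a genuine decomposition of the semigroup, that is, $S(t)=S^{(1)}(t)+S^{(2)}(t)$. Adding \eqref{v_eq} to \eqref{w_eq} and using $u=v+w$, $u_t=v_t+w_t$, the auxiliary terms $-\tfrac12\|w_t\|^p w_t$ and $+\tfrac12\|w_t\|^p w_t$ cancel, the $\lambda$-terms $\lambda v-\lambda u+\lambda w$ collapse to zero, and the remaining damping contributions recombine precisely into $\effip u_t$; hence $u=v+w$ solves \eqref{Problem}. Together with the matching initial data $v(0)=u_0$, $v_t(0)=u_1$, $w(0)=w_t(0)=0$, the uniqueness statement of Theorem~\ref{well-posed} then forces $(u,u_t)=(v,v_t)+(w,w_t)$, as required.

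Next I would check the two conditions of Proposition~\ref{asym_smooth} on an arbitrary bounded forward-invariant set $B$. Dissipativity keeps the orbit of $B$ uniformly bounded in $\VV\times\HH$, so Lemma~\ref{v_decay} applies and yields $\|\nabla v(t)\|^2+\|v_t(t)\|^2\to 0$ uniformly for $(u_0,u_1)\in B$, which is exactly condition \eqref{S2_cond} for $S^{(2)}(t)$. For the compactness of $S^{(1)}(t)$, Lemma~\ref{w_regularity} provides a uniform-in-$B$ bound of $(w(t),w_t(t))$ in $H^{\beta+1}(\Omega)\times H^{\beta}(\Omega)$ for some $\beta>0$; since this space embeds compactly into $\VV\times\HH$, the image $S^{(1)}(t)B$ is relatively compact for every $t>0$. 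Proposition~\ref{asym_smooth} then yields asymptotic smoothness, and Proposition~\ref{attractor_ex} delivers the compact global attractor $\mathscr{A}$.

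Because the genuinely hard analysis has already been discharged in the preparatory lemmas, the present proof is essentially an assembly. The step requiring the most care is the verification of relative compactness of $S^{(1)}(t)B$: it hinges on Lemma~\ref{w_regularity} producing a regularity gain that is uniform over the bounded set $B$ (not merely for each fixed trajectory) and on invoking the compactness of the embedding $H^{\beta+1}(\Omega)\times H^{\beta}(\Omega)\hookrightarrow\VV\times\HH$, which is where the smoothing effect of the decomposition is finally converted into the desired compactness.
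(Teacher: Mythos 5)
Your proposal is correct and takes essentially the same route as the paper: the paper also defines $S^{(2)}(t)(u_0,u_1)=(v(t),v_t(t))$ and $S^{(1)}(t)(u_0,u_1)=(w(t),w_t(t))$ from the decomposition \eqref{v_eq}--\eqref{w_eq}, invokes Lemma \ref{v_decay} for condition \eqref{S2_cond}, Lemma \ref{w_regularity} for compactness, and then concludes via Propositions \ref{asym_smooth} and \ref{attractor_ex} together with Theorem \ref{dissipativity}. Your explicit check that the two auxiliary equations sum back to \eqref{Problem} (with the $\tfrac12\|w_t\|^p w_t$ and $\lambda$-terms cancelling, so uniqueness forces $u=v+w$) is a detail the paper leaves implicit.
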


    \section{Regularity and Decay}\label{sec_regularity-decay}

    In this section, we establish some good properties, including the higher regularity of $\mathscr{A}$ and the polynomial decay rate of trajectories near the origin, which are useful to study the dimension of the global attractor. Besides Assumption \ref{f_assum1}, in this section we always assume the following condition:
    \begin{assum}\label{assum_2}
      $f(0)=0$, $f'(0)>-\lambda_1$.
    \end{assum}

We start with a technical lemma, which derives the boundedness from the uniform local information. 

    \begin{lemma}\label{F_estimate_lemma}
        Suppose that $F$, $\psi$ and $\varphi$ are nonnegative functions on $[0,\infty)$ and there exist constants $C_1, C_2$ such that
        \begin{itemize}
        \item [(i)] $ \|\psi\|_{L^1(t,t+1)} + \|\varphi\|_{L^1(t,t+1)} \leq C_1$ for any $t\geq 0$; 
        \item [(ii)]
        $\sup_{s\in[t,t+1]}\varphi(s)\leq C_2\inf_{s\in[t,t+1]}\varphi(s)$ for any $t\geq 0$;
      \item [(iii)] $F$ is absolutely continuous and satisfies $F'+ \varphi F \leq \psi\varphi$. 
      \end{itemize}
      Then there exists $M>0$, depending only on $F(0), C_1, C_2$, such that $F(t)\leq M$ for all $t\in [0, \infty)$.
    \end{lemma}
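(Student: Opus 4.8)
The plan is to read (iii) as a linear Gronwall inequality and then exploit the unit-interval hypotheses (i)--(ii) to sum the resulting source contributions uniformly in $t$. Since $\varphi\in L^1_{\mathrm{loc}}$ by (i), multiplying $F'+\varphi F\le \psi\varphi$ by the integrating factor $e^{\int_0^t\varphi}$ and integrating over $[0,t]$ gives
\begin{equation*}
  F(t)\le F(0)\,e^{-\int_0^t\varphi(\sigma)d\sigma}+\int_0^t \psi(\tau)\varphi(\tau)\,e^{-\int_\tau^t\varphi(\sigma)d\sigma}\,d\tau .
\end{equation*}
The first term is bounded by $F(0)$, so everything reduces to controlling the source integral by a constant depending only on $C_1,C_2$.

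Fix $t$ and let $N$ be the integer with $t\in[N,N+1)$. I would decompose $[0,t]$ into the unit intervals $I_j=[j,j+1]$ (with $I_N$ truncated at $t$) and set $a_j=\int_{I_j}\varphi$, so that $a_j\le C_1$ by (i). Two observations from the hypotheses feed the estimate. First, (ii) together with $\inf_{I_j}\varphi\le \int_{I_j}\varphi=a_j$ gives the pointwise bound $\sup_{I_j}\varphi\le C_2\inf_{I_j}\varphi\le C_2 a_j$; combined with $\int_{I_j}\psi\le C_1$ from (i) this controls $\int_{I_j}\psi\varphi\le C_1C_2\,a_j$. Second, for $\tau\in I_j$ one has $\int_\tau^t\varphi\ge\int_{j+1}^{N}\varphi=:A_{j+1}$ (setting $A_{N+1}=0$). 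Feeding these into the source integral yields
\begin{equation*}
  \int_0^t \psi\varphi\,e^{-\int_\tau^t\varphi}\,d\tau \le \sum_{j=0}^{N} e^{-A_{j+1}}\int_{I_j}\psi\varphi \le C_1C_2\sum_{j=0}^{N} a_j\,e^{-A_{j+1}} .
\end{equation*}

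The crux, and the step I expect to be the main obstacle, is bounding $\sum_{j} a_j e^{-A_{j+1}}$ by a constant independent of $N$ and of the distribution of the $a_j$'s; this is exactly the regime where $\varphi$ may be so small that no per-step contraction is available, yet the accumulated source must still stay bounded. The key is the elementary inequality: the function $a\mapsto a/(1-e^{-a})$ is increasing, so for $0\le a\le C_1$ one has $a\le \tfrac{C_1}{1-e^{-C_1}}\,(1-e^{-a})$. Applying this with $a=a_j$ and using $A_j=a_j+A_{j+1}$ converts the sum into a telescoping one,
\begin{equation*}
  \sum_{j=0}^{N} a_j\,e^{-A_{j+1}} \le \frac{C_1}{1-e^{-C_1}}\sum_{j=0}^{N}\bigl(e^{-A_{j+1}}-e^{-A_j}\bigr) =\frac{C_1}{1-e^{-C_1}}\bigl(1-e^{-A_0}\bigr)\le \frac{C_1}{1-e^{-C_1}} .
\end{equation*}
Combining the displays, the source integral is at most $C_1^2C_2/(1-e^{-C_1})$, whence $F(t)\le M$ for all $t\ge0$ with $M=F(0)+C_1^2C_2/(1-e^{-C_1})$, which depends only on $F(0),C_1,C_2$ as claimed. (Equivalently, one may run the argument discretely, first deriving $F(n+1)\le e^{-a_n}F(n)+C_1C_2 a_n$ and iterating, then bounding $F$ on each $I_n$ by $F(n)+\int_{I_n}\psi\varphi$; the same telescoping inequality closes the estimate.)
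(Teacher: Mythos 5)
Your proposal is correct in substance and follows essentially the same route as the paper: the integrating factor $e^{\int_0^t\varphi}$, the decomposition of $[0,t]$ into unit intervals, hypothesis (ii) to replace $\varphi$ by its supremum at the cost of a factor $C_2$, and a telescoping cancellation to sum the per-interval contributions. The only real difference is the summation device. The paper applies (ii) a \emph{second} time to convert $\int_{k-1}^{k}\psi\varphi\,e^{\int_0^s\varphi}ds$ back into $C_1C_2^2e^{C_1}\int_{k-1}^{k}\varphi(s)e^{\int_0^s\varphi}ds$, whose sum is the exact integral $C\bigl[e^{\int_0^{\lceil t\rceil}\varphi}-1\bigr]$, later cancelled by $e^{-\int_0^t\varphi}$; you instead invoke the monotonicity of $a\mapsto a/(1-e^{-a})$ to compare $a_j$ with $1-e^{-a_j}$ and telescope discretely. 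These are the same cancellation in continuous and discrete guise, and your variant is marginally more economical in that it uses (ii) only once.

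Two endpoint details fail as written, though neither needs a new idea. First, on the truncated interval $I_N=[N,t]$ the inequality $\inf_{I_N}\varphi\le\int_{I_N}\varphi$ is false in general when $t-N<1$ (take $\varphi$ constant), so your chain $\sup_{I_N}\varphi\le C_2a_N$, and hence $\int_{I_N}\psi\varphi\le C_1C_2\,a_N$, is unjustified on the last interval. The repair is one line: by (ii) applied on the full interval $[N,N+1]$,
\begin{equation*}
  \sup_{[N,t]}\varphi\le\sup_{[N,N+1]}\varphi\le C_2\inf_{[N,N+1]}\varphi\le C_2\int_N^{N+1}\varphi\le C_1C_2,
\end{equation*}
so this interval contributes at most $C_1^2C_2$, a harmless additive constant, and only the non-truncated intervals $j\le N-1$ enter the telescoping sum. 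Second, with your definition $A_{j+1}=\int_{j+1}^{N}\varphi$ the relation $A_j=a_j+A_{j+1}$ fails at the top index, so the telescoping identity as stated is off by one term; either define $A_j=\int_j^t\varphi$ (upper limit $t$, not $N$), after which $A_j=a_j+A_{j+1}$ holds for every $j\le N-1$ and the sum is exactly $e^{-A_N}-e^{-A_0}\le 1$, or keep your definition and accept the bound $2$ in place of $1$. With these corrections your argument closes, with, say, $M=F(0)+C_1^2C_2\bigl(1+\tfrac{1}{1-e^{-C_1}}\bigr)$, depending only on $F(0)$, $C_1$, $C_2$ as required.
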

    \begin{proof}
      Since
    \begin{equation*}
        \left(e^{\intt \varphi(s)ds} F(t)\right)' \leq \psi(t)\varphi(t) e^{\intt \varphi(s) ds},
    \end{equation*}
    integrating over $[0,t]$, we can get
    \begin{align*}
            e^{\intt \varphi}F(t) - F(0) 
      \leq \int_0^t \psi(s)\varphi(s)e^{\int_0^s \varphi}ds\leq \sum_{k=1}^{\lceil t \rceil}\int_{k-1}^{k} \psi(s)\varphi(s)e^{\int_0^s \varphi}ds,
    \end{align*}
    where $\lceil t \rceil$ denotes the smallest integer larger than or equal to $t$. For each $k$,
      \begin{align*}
        \int_{k-1}^{k} \psi(s)\varphi(s)e^{\int_0^s \varphi}ds&\leq C_2\int_{k-1}^k \psi(s)ds\cdot \varphi(k) e^{\int_0^k \varphi}\leq C_1C_2 \int_{k-1}^k \varphi(k) e^{\int_0^k \varphi} ds \\
            &\leq C_1C_2^2e^{C_1} \int_{k-1}^k \varphi(s) e^{\int_0^s \varphi} ds,
      \end{align*}
      which implies that
      \begin{align*}
        e^{\intt \varphi}F(t) - F(0)\leq C\int_0^{\lceil t \rceil} \varphi(s) e^{\int_0^s \varphi} ds=C\big[e^{\int_0^{\lceil t\rceil}\varphi}-1\big].
      \end{align*}
Therefore,
\begin{align*}
  F(t)\leq F(0)e^{-\intt \varphi}+C\big[e^{\int_{t}^{\lceil t\rceil}\varphi}-e^{-\intt \varphi}\big]\leq M,
\end{align*}
where $M$ depends only on $F(0), C_1, C_2$. 
\end{proof}

    \begin{lemma}\label{w_regularity_new}
        Let $(u,u_t)$ start from a bounded set $B_0\subset \VV\times\HH$. Set $\lambda=f'(0)$ in equation \eqref{w_eq}. Then the solution $(w(t),w_t(t))$ keeps (globally in time) bounded in $H^{1+\beta}(\Omega)\times H^{\beta}(\Omega)$ with $\beta=\frac{2}{7}$.
    \end{lemma}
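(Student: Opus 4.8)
The plan is to upgrade the pointwise-in-time regularity of Lemma \ref{w_regularity} to a bound that is uniform in $t$, by feeding a \emph{weighted} energy estimate for a fractional version of $w$ into the discrete-like Gronwall Lemma \ref{F_estimate_lemma}. First I would exploit the special choice $\lambda=f'(0)$. Setting $f_*(s)=f(s)-f'(0)s$, Assumption \ref{assum_2} together with $f\in C^2$ and \eqref{f_cond1} yields $f_*(0)=f_*'(0)=0$ and the quadratic vanishing $|f_*(s)|\leq C(s^2+|s|^3)$, $|f_*'(s)|\leq C(|s|+s^2)$ near the degenerate point. Applying $A^{\beta/2}$ with $\beta=\tfrac27$ (that is $A^{1/7}$, which is admissible since $\tfrac17<\tfrac14$) to \eqref{w_eq} and writing $\tilde w=A^{1/7}w$, I obtain
\[
\tilde w_{tt}-\Delta\tilde w+\lambda\tilde w+\varphi\,\tilde w_t+A^{1/7}f_*(u)=0,\qquad \tilde w(0)=\tilde w_t(0)=0,
\]
where $\varphi=\tfrac12\|w_t\|^p+\|\nabla u\|^p+\tfrac12\|u_t\|^p\geq 0$ plays the role of the generic weight in Lemma \ref{F_estimate_lemma}. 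Since $\lambda=f'(0)>-\lambda_1$, the operator $-\Delta+\lambda$ is coercive, so $F(t):=\tfrac12\|\tilde w_t\|^2+\tfrac12\|\nabla\tilde w\|^2+\tfrac{\lambda}{2}\|\tilde w\|^2$ is equivalent to $\|w\|_{H^{1+\beta}}^2+\|w_t\|_{H^{\beta}}^2$, and the claim reduces to a global bound on $F$.

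The analytic core is hypothesis (iii), the differential inequality $F'+\varphi F\leq\psi\varphi$ with the \emph{same} degenerate $\varphi$. A constant multiplier $\tilde w_t+\epsilon\tilde w$ would only buy unweighted dissipation of the potential energy and fails when $\varphi$ is small; so, inspired by the structure of the damping, I would test against $\tilde w_t+\delta\varphi\,\tilde w$. The damping term contributes $\varphi\|\tilde w_t\|^2$, while the weighted multiplier contributes $\delta\varphi(\|\nabla\tilde w\|^2+\lambda\|\tilde w\|^2)\geq\delta c\,\varphi\|\nabla\tilde w\|^2$ by coercivity, together producing $c'\varphi F$. The forcing is split as $|(A^{1/7}f_*(u),\tilde w_t)|\leq\tfrac{c'}{4}\varphi\|\tilde w_t\|^2+\psi\varphi$ with $\psi\sim\varphi^{-2}\|A^{1/7}f_*(u)\|^2$. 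The decisive point is that the choice $\lambda=f'(0)$ makes $f_*$ vanish quadratically exactly where $\varphi\gtrsim\|\nabla u\|^p$ degenerates, so that near the origin $\|A^{1/7}f_*(u)\|^2$ vanishes faster than $\varphi^2$ and $\psi$ stays controlled; balancing this against the Sobolev/interpolation budget (and using $p\in(1,2)$) is what pins down the exponent $\beta=\tfrac27$.

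It then remains to check the two structural hypotheses and conclude. For (i), dissipativity (Theorem \ref{dissipativity}) bounds $\|\nabla u\|,\|u_t\|$ and the energy estimate bounds $\|w_t\|$ in $L^2$, so $\varphi\in L^\infty$ and $\int_t^{t+1}\varphi\leq C_1$; the bound $\int_t^{t+1}\psi\leq C_1$ comes from the Strichartz and interpolation estimates on $A^{1/7}f_*(u)$ exactly as in the proof of Lemma \ref{w_regularity}, combined with the quadratic control above. For (ii) one must verify that $\varphi$ varies slowly on unit intervals, $\sup_{[t,t+1]}\varphi\leq C_2\inf_{[t,t+1]}\varphi$, which I would extract from the uniform bounds and a continuity/slow-variation estimate for $\varphi$ along trajectories. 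Lemma \ref{F_estimate_lemma} then gives $F(t)\leq M$ for all $t\geq0$, which is the asserted global bound of $(w,w_t)$ in $H^{1+\beta}(\Omega)\times H^{\beta}(\Omega)$, uniformly in $(u_0,u_1)\in B_0$.

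I expect the main obstacle to be hypothesis (iii) itself, and more precisely the appearance of $\varphi'$. Differentiating the cross term $\delta\varphi(\tilde w,\tilde w_t)$ hidden in $F$ produces a term $\delta\varphi'(\tilde w,\tilde w_t)$, and $\varphi$ contains $\|u_t\|^p$ and $\|w_t\|^p$, whose time derivatives involve $u_{tt}$ with no controllable sign for generalized solutions; thus $\varphi$ is not boundedly differentiable. The way around this is precisely the discrete-like design of Lemma \ref{F_estimate_lemma}: one replaces $\varphi$ in the multiplier by a piecewise-constant weight that is comparable to it on each unit interval, so that $\varphi'$ never appears, and transfers the comparison into hypothesis (ii). Making this substitution rigorous while keeping the gain $c'\varphi F$ and the controllability of $\psi$ intact is the delicate step; the source estimate for $\psi$, which forces the exponent $\beta=\tfrac27$, is a secondary but nontrivial difficulty.
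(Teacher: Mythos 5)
Your scaffolding matches the paper's proof: set $f_*(s)=f(s)-f'(0)s$ so that $f_*$ vanishes quadratically, conjugate by $A^{\beta/2}$ with $\beta=\tfrac27$ (which indeed comes from the Strichartz bound in $L^{7/2}(t,t+T;L^{14}(\Omega))$ combined with a Sobolev embedding $W^{1,q}\hookrightarrow H^{2/7}$), run a weighted energy estimate with a multiplier of the form $\tilde w_t+\epsilon\,(\mathrm{weight})\,\tilde w$, absorb the forcing into the weight, and close with Lemma \ref{F_estimate_lemma}. The genuine gap sits exactly at the step you flag as delicate: the choice of weight and the treatment of its time derivative. You take the full damping coefficient $\varphi=\tfrac12\|w_t\|^p+\|\nabla u\|^p+\tfrac12\|u_t\|^p$ as the weight and, since it is not boundedly differentiable, propose a piecewise-constant surrogate on unit intervals. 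Two things go wrong. First, hypothesis (ii) of Lemma \ref{F_estimate_lemma} for this $\varphi$ is not a routine ``slow-variation'' fact: controlling $\sup_{[t,t+1]}\|w_t\|^p$ by $\inf_{[t,t+1]}(\|\nabla u\|^p+\|u_t\|^p)$ would require a decay rate for $v_t=u_t-w_t$ relative to $I_u^{1/2}$, and no such rate exists in the paper (Lemma \ref{v_decay} gives uniform decay of $v$ with no rate, while $I_u$ itself may decay to zero); note also $w_t(0)=0$. Second, with a piecewise-constant weight $\bar\varphi_k$ the functional $F_k=E+\delta\bar\varphi_k(\tilde w_t,\tilde w)$ jumps at each integer time by a factor $1+O(\delta\bar\varphi_k)$, while the dissipation gained over the interval is only a factor $e^{-c\delta\bar\varphi_k}=1-c\delta\bar\varphi_k+\dots$; the jump and the gain are of the same order in $\delta\bar\varphi_k$, so whether the iteration closes reduces to a comparison of two fixed, untunable constants, and when $I_u\to0$ along a trajectory (so $\sum_k\bar\varphi_k$ may still diverge) the leakage can accumulate. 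Moreover, Lemma \ref{F_estimate_lemma} as stated requires an absolutely continuous $F$ on $[0,\infty)$, so it cannot be invoked for a functional with jumps without reproving it.

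The paper's resolution removes both problems at once and is the idea your proposal is missing: the weight is taken to be the \emph{combined} quantity $I_u^{p/2}=(\|\nabla u\|^2+\|u_t\|^2)^{p/2}$, which is equivalent to $I_{u,p}$, while the $\tfrac12\|w_t\|^p$ portion of the damping is never placed in the weight (it is simply kept, or discarded, as extra nonnegative dissipation). Unlike $\|u_t\|^p$ or $\|w_t\|^p$ separately, the combination $I_u$ \emph{is} differentiable along the flow through the energy identity $\frac{d}{dt}I_u=-2I_{u,p}\|u_t\|^2-2(f(u),u_t)$, which involves no uncontrolled pairing with $u_{tt}$ and yields $\big|\frac{d}{dt}\ln I_u\big|\le C_{B_0}$ on bounded sets. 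This single estimate lets one differentiate the cross term $\epsilon I_u^{p/2}(\tilde w_t,\tilde w)$ pointwise---no surrogate weight is needed---and simultaneously proves hypothesis (ii) in the sharp form $e^{-C|t-s|}\le I_u(t)/I_u(s)\le e^{C|t-s|}$. If you replace your weight by $I_u^{p/2}$ and differentiate the cross term directly via this identity, the rest of your argument (Strichartz for hypothesis (i), absorption of $(A^{\beta/2}f_*(u),\tilde w_t)$ using $\|f_*(u)\|_{H^\beta}\le C(1+\|u\|_{14})\|\nabla u\|^2$ and $\|\nabla u\|^2\le CI_u^{p/2}$ for $p<2$) closes exactly as in the paper.
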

    \begin{proof}
     Similar to \eqref{w_es_strichartz}, for any $t$ and any $T>0$, we derive that
    \begin{equation*}
        \begin{aligned}
            \|u\|_{L^{\frac{7}{2}}(t,t+T;L^{14}(\Omega))} 
            &\leq C\Big( \|\nabla u(t+T/2)\| + \|u_t(t+T/2)\| +\|f(u)\|_{L^1(t,t+T;L^2(\Omega))}  \\
            &\qquad\qquad  + \big\|(\|\nabla u\|^p+\|u_t\|^p)u_t\big\|_{L^1(t,t+T;L^2(\Omega))} \Big) \\
            &\leq C(T, B_0).
        \end{aligned}
    \end{equation*}
Denote still $f_*(s)=f(s)-\lambda s$. Since $f_*(0)=f_*'(0)=0$ and $f_*\in C^2$, it holds that $|f_*'(s)|\leq C(|s|+|s|^2)$. Setting $\frac{1}{q} = \frac{1}{14}+\frac{1}{6}+\frac{1}{2}$, we calculate
    \begin{equation*}
        \begin{aligned}
            \|f_*(u)\|_{W^{1,q}}
            &\leq C\|f_*'(u)\nabla u\|_{L^q}\leq C\|(|u|+|u|^2)\nabla u\|_{L^q}\leq C(\|\nabla u\|^2 + \|u\|_{14}\|\nabla u\|^{2})\\&\leq C\left(1+\|u\|_{14}\right)\|\nabla u\|^2,
        \end{aligned}
      \end{equation*}
      and further, for $\beta=\frac{2}{7}$
      \begin{equation}\label{fu_Hs_estimate}
        \|f_*(u)\|_{D(A^{\frac{\beta}{2}})}\leq C\|f_*(u)\|_{H^\beta}\leq C\|f_*(u)\|_{W^{1,q}}\leq C\left(1+\|u\|_{14}\right)\|\nabla u\|^2.
      \end{equation}

      Denote $\tilde{w} = A^{\frac{\beta}{2}}w$, which satisfies
    \begin{equation}\label{w_tilde_eq}
      \begin{cases}
            \tilde{w}_{tt} - \Delta \tilde{w}+\lambda \tilde{w} + \left(\frac{1}{2}\|w_t\|^p + \|\nabla u\|^p + \frac{1}{2}\|u_t\|^p\right) \tilde{w}_t + A^{\frac{\beta}{2}}f_*(u) = 0, \\
        \tilde{w}(0) =\tilde{w}_t(0) = 0.
        \end{cases}
    \end{equation}
Multiplying \eqref{w_tilde_eq} by $\tilde{w}_t$ and integrating over $\Omega$, we deduce
    \begin{equation}\label{multi_w_tilde_t}
        \frac{1}{2}\frac{d}{dt}(\|\tilde{w}_t\|^2 + \|\nabla \tilde{w}\|^2+\lambda\|\tilde{w}\|^2) + \frac{1}{2}I_{u,p}\|\tilde{w}_t\|^2 + (A^{\frac{\beta}{2}}f_*(u),\tilde{w}_t) \leq 0,
      \end{equation}
where recall that $I_{u,p}=\|\nabla u\|^p + \|u_t\|^p$. Multiplying \eqref{w_tilde_eq} by $\tilde{w}$ and integrating over $\Omega$, we have
    \begin{equation}\label{multi_w_tilde}
        \begin{aligned}
            \frac{d}{dt}(\tilde{w}_t,\tilde{w}) - \|\tilde{w}_t\|^2 
            &+ \|\nabla \tilde{w}\|^2 +\lambda\|\tilde{w}\|^2+ (A^{\frac{\beta}{2}}f_*(u),\tilde{w}) \\
            &+ \Big(\frac{1}{2}\|w_t\|^p + \frac{1}{2}\|\nabla u\|^p + \frac{1}{2}I_{u,p}\Big)(\tilde{w}_t,\tilde{w}) = 0.
        \end{aligned}
    \end{equation}
    Then we multiply \eqref{multi_w_tilde} by $\epsilon I_u^{\frac{p}{2}}$ and sum it with \eqref{multi_w_tilde_t} to obtain
    \begin{equation}\label{w_tilde_estimate}
        \begin{aligned}
            &\frac{1}{2}\frac{d}{dt}(\|\tilde{w}_t\|^2 + \|\nabla \tilde{w}\|^2+\lambda\|\tilde{w}\|^2) 
            + \epsilon \frac{d}{dt}\left(I_u^{\frac{p}{2}}(\tilde{w}_t,\tilde{w})\right) +\Big(\frac{1}{2}-\epsilon\Big)I_u^{\frac{p}{2}}\|\tilde{w}_t\|^2\\
            & \quad+ \epsilon I_u^{\frac{p}{2}}(\|\nabla \tilde{w}\|^2+\lambda\|\tilde{w}\|^2) \leq \frac{p\epsilon}{2}\left|(\tilde{w}_t,\tilde{w})I_u^{\frac{p}{2}-1}\frac{d}{dt}I_u\right| + |(A^{\frac{\beta}{2}}f_*(u),\tilde{w}_t)|\\
            &\qquad + \epsilon I_u^{\frac{p}{2}}|(A^{\frac{\beta}{2}}f_*(u),\tilde{w})|  + C_p\epsilon I_u^{\frac{p}{2}}\Big(\frac{1}{2}\|w_t\|^p + \frac{1}{2}\|\nabla u\|^p + \frac{1}{2}I_{u,p}\Big)|(\tilde{w}_t,\tilde{w})|,
        \end{aligned}
      \end{equation}
      where the fact $I_u^{\frac{p}{2}}\approx I_{u,p}$ is used. For the first term on the right hand side of the above inequality, it holds 
    \begin{align*}            \frac{p\epsilon}{2}\left|(\tilde{w}_t,\tilde{w})I_u^{\frac{p}{2}-1}\frac{d}{dt}I_u\right| 
            &= \frac{p\epsilon}{2}\left| (\tilde{w}_t,\tilde{w})I_u^{\frac{p}{2}-1}\left[\effip\|u_t\|^2 + (f(u),u_t)\right] \right| \\
            &\leq C\epsilon I_u^{\frac{p}{2}}\|\tilde{w}_t\|\|\nabla\tilde{w}\|\\&\leq \frac{1}{8}I_u^{\frac{p}{2}}\|\tilde{w}_t\|^2 + C\epsilon^2 I_u^{\frac{p}{2}}\|\nabla\tilde{w}\|^2,
      \end{align*}
      where we have used the fact $|f(u)|\leq C(|u|+|u|^3)$. By \eqref{fu_Hs_estimate} and H\"older's inequality, we have
    \begin{equation*}
        \begin{aligned}
            |(A^{\frac{\beta}{2}}f_*(u),\tilde{w}_t)| 
            \leq C\|f_*(u)\|_{H^{\beta}} \|\tilde{w}_t\| 
            &\leq \frac{1}{8}\|\nabla u\|^2\|\tilde{w}_t\|^2 + C\|\nabla u\|^2\left(1 + \|u\|_{14}^2\right) \\
            &\leq \frac{1}{8}I_u^{\frac{p}{2}}\|\tilde{w}_t\|^2 + CI_u^{\frac{p}{2}}(1+\|u\|_{14}^2).
        \end{aligned}
    \end{equation*}
The last two terms can be estimated similarly since they all contain higher-power factors $I_u^{\frac{q}{2}}$ with $q\geq p$. As a result, for $\epsilon$ small enough we conclude
    \begin{align}
            &\frac{d}{dt}\Big(\frac{1}{2}(\|\tilde{w}_t\|^2 + \|\nabla \tilde{w}\|^2+\lambda\|\tilde{w}\|^2)+\epsilon I_u^{\frac{p}{2}}(\tilde{w}_t,\tilde{w})\Big) + \frac{2\epsilon}{3}I_u^{\frac{p}{2}}\Big(\frac{1}{2}(\|\tilde{w}_t\|^2 + \|\nabla \tilde{w}\|^2+\lambda\|\tilde{w}\|^2)\nonumber\\&\qquad+\epsilon I_u^{\frac{p}{2}}(\tilde{w}_t,\tilde{w})\Big)\leq CI_u^{\frac{p}{2}}(1+\|u\|_{14}^2).\label{r_es_Iu}
      \end{align}
      
Before applying Lemma \ref{F_estimate_lemma} to \eqref{r_es_Iu}, we still need to verify that
\begin{align}\label{r_cond_2}
  \sup_{s\in[t,t+1]}I_u(s)\leq C_2\inf_{s\in[t,t+1]}I_u(s)
\end{align}
for any $t\geq 0$. To do this, note that
\begin{align*}
  I_u'(t)+2I_{u, p}\|u_t\|^2+2(f(u), u_t)=0,
\end{align*}
which yields that
\begin{align*}
  -C(1+I_u)\leq -C\Big(I_u^{\frac{p}{2}}+\frac{|(f(u), u_t)|}{I_u}\Big)\leq \frac{I_u'}{I_u}&\leq \frac{2|(f(u), u_t)|}{I_u} \leq C(1+I_u).
\end{align*}
Thanks to the dissipation of $u$, we know $\Big|\frac{I_u'}{I_u}\Big|\leq C_{B_0}$, i.e., $|(\ln I_u)'|\leq C_{B_0}$. Therefore,
\begin{align*}
  e^{-C(t-s)}\leq \frac{I_u(t)}{I_u(s)}\leq e^{C(t-s)}\quad \mathrm{for~any~}0\leq s\leq t,
\end{align*}
which implies exactly \eqref{r_cond_2}.

Now, by applying Lemma \ref{F_estimate_lemma} to \eqref{r_es_Iu} and taking $\epsilon$ small enough such that
\begin{align*}
  \frac{1}{2}(\|\tilde{w}_t\|^2 + \|\nabla \tilde{w}\|^2+\lambda\|\tilde{w}\|^2)+\epsilon I_u^{\frac{p}{2}}(\tilde{w}_t,\tilde{w})\geq c(\|\tilde{w}_t\|^2 + \|\nabla \tilde{w}\|^2),
\end{align*}
we obtain the desired result. 
  \end{proof}
  
    From Lemma \eqref{v_decay} and Lemma \eqref{w_regularity_new}, we know there exists $R>0$ such that the set
    \begin{equation*}
       \big\{(u,v)\in H^{1+\beta}(\Omega)\times H^{\beta}(\Omega)\big| \|u\|_{H^{1+\beta}}^2 + \|v\|_{H^{\beta}}^2 \leq R^2\big\}
    \end{equation*}
    is attracting in $\VV\times\HH$. Therefore,
    \begin{thm}\label{attractor_reg}
        The global attractor $\mathscr{A}$ is bounded in $H^{\beta+1}(\Omega)\times H^{\beta}(\Omega)$ with $\beta=\frac{2}{7}$.
    \end{thm}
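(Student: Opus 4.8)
The plan is to combine the invariance of $\mathscr{A}$ with the two preceding lemmas through the decomposition $u = v + w$, closing the argument by a weak-compactness observation. Since $\mathscr{A}$ is bounded in $\VV\times\HH$ by Theorem \ref{attractor_existence}, I may fix a bounded set $B_0 \supseteq \mathscr{A}$ and use it as the reference set in Lemmas \ref{v_decay} and \ref{w_regularity_new}, taking $\lambda = f'(0)$ in the decomposition \eqref{v_eq}--\eqref{w_eq}. Writing $S(t) = S^{(1)}(t) + S^{(2)}(t)$ with $S^{(1)}(t)(u_0,u_1) = (w(t),w_t(t))$ and $S^{(2)}(t)(u_0,u_1) = (v(t),v_t(t))$, Lemma \ref{w_regularity_new} provides a radius $R>0$, uniform over $B_0$, such that $(w(t),w_t(t))$ stays in the ball $B_R$ of $H^{\beta+1}(\Omega)\times H^{\beta}(\Omega)$ for all $t\ge 0$, while Lemma \ref{v_decay} guarantees $(v(t),v_t(t)) \to 0$ in $\VV\times\HH$ as $t\to\infty$, uniformly over $B_0$.

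The next step is to exploit invariance. For each point $z \in \mathscr{A}$ and each $t>0$, the identity $S(t)\mathscr{A} = \mathscr{A}$ furnishes a preimage $z_t \in \mathscr{A} \subseteq B_0$ with $S(t)z_t = z$. Applying the decomposition to the orbit through $z_t$ gives
\begin{equation*}
  z = (w(t),w_t(t)) + (v(t),v_t(t)),
\end{equation*}
where $(w(t),w_t(t)) \in B_R$ and $(v(t),v_t(t)) \to 0$ in $\VV\times\HH$. Consequently $z = \lim_{t\to\infty}(w(t),w_t(t))$ in the topology of $\VV\times\HH$, which realizes $z$ as a $\VV\times\HH$-limit of a family bounded in the stronger space $H^{\beta+1}(\Omega)\times H^{\beta}(\Omega)$.

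Finally I would conclude by reflexivity and weak lower semicontinuity of the norm. Since $H^{\beta+1}(\Omega)\times H^{\beta}(\Omega)$ is a Hilbert space, a sequence $(w(t_n),w_t(t_n))$ with $t_n\to\infty$, being bounded by $R$, admits a subsequence converging weakly in $H^{\beta+1}\times H^{\beta}$ to some $\tilde{z}$ with $\|\tilde{z}\|_{H^{\beta+1}\times H^{\beta}} \le R$; because the embedding $H^{\beta+1}\times H^{\beta} \hookrightarrow \VV\times\HH$ is continuous, this weak limit must coincide with the strong $\VV\times\HH$-limit $z$, whence $z\in H^{\beta+1}\times H^{\beta}$ with $\|z\|_{H^{\beta+1}\times H^{\beta}}\le R$. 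As $z\in\mathscr{A}$ was arbitrary, $\mathscr{A}\subseteq B_R$ and the theorem follows. The only genuinely delicate point is the identification of the weak limit $\tilde{z}$ with the strong limit $z$ across two different topologies; this is routine once one notes that weak convergence in the finer space forces convergence of the pairing against functionals that also act continuously on the coarser space, so that $\tilde{z}=z$. Everything else is a direct bookkeeping of the two lemmas, exactly as anticipated by the remark that the $R$-ball of $H^{\beta+1}\times H^{\beta}$ is an attracting set in $\VV\times\HH$.
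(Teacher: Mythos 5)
Your proof is correct and takes essentially the same route as the paper: the same decomposition $u=v+w$ with $\lambda=f'(0)$, combining Lemma \ref{v_decay} and Lemma \ref{w_regularity_new} to show that the $R$-ball of $H^{1+\beta}(\Omega)\times H^{\beta}(\Omega)$ attracts bounded sets of $\VV\times\HH$, and then concluding that $\mathscr{A}$ lies in that ball. The paper compresses the final step into the single assertion that this ball is an attracting set; your invariance-plus-weak-compactness argument is precisely the standard justification it leaves implicit.
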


    \begin{coro}\label{Holder_t}
        The solutions in $\mathscr{A}$ are uniformly H\"older continuous in $t$, i.e. there exist $C>0$ and $\theta>0$, such that, for $(u, u_t)\in \mathscr{A}$
        \begin{equation*}
            \|(u(t_1),u_t(t_1))-(u(t_2),u_t(t_2))\|_{\VV\times\HH} \leq C|t_1-t_2|^{\theta}, \quad\forall\, t_1, t_2\in\mathbb{R},
        \end{equation*}
        where $C$ and $\theta$ are independent of $u$.
    \end{coro}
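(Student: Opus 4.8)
The plan is to exploit the higher regularity of the attractor from Theorem \ref{attractor_reg}. Since $\mathscr{A}$ is invariant, any $(u,u_t)\in\mathscr{A}$ lies on a complete trajectory that stays in $\mathscr{A}$ for all $t\in\mathbb{R}$; hence $\|u(t)\|_{H^{1+\beta}}^2+\|u_t(t)\|_{H^{\beta}}^2$ is bounded by a constant depending only on $\mathscr{A}$, uniformly in $t\in\mathbb{R}$ and in the trajectory. First I would reduce the claim to small increments: for $|t_1-t_2|\ge 1$ the left-hand side is bounded by $2\sup_{s}\|(u(s),u_t(s))\|_{\VV\times\HH}\le C\le C|t_1-t_2|^{\theta}$, so it suffices to treat $|t_1-t_2|\le 1$. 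The two components are then handled separately by combining a crude Lipschitz-in-time bound in a low-order space with the uniform high-order spatial regularity, interpolated to the target norm.

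For the first component, write $u(t_1)-u(t_2)=\int_{t_2}^{t_1}u_t(s)\,ds$, so that $\|u(t_1)-u(t_2)\|_{H^{\beta}}\le|t_1-t_2|\sup_s\|u_t(s)\|_{H^{\beta}}\le C|t_1-t_2|$, while the uniform $H^{1+\beta}$ bound gives $\|u(t_1)-u(t_2)\|_{H^{1+\beta}}\le C$. The interpolation inequality $\|g\|_{H^1}\le\|g\|_{H^{\beta}}^{\beta}\|g\|_{H^{1+\beta}}^{1-\beta}$, valid because $1=\beta\cdot\beta+(1-\beta)(1+\beta)$, then yields $\|u(t_1)-u(t_2)\|_{\VV}\le C|t_1-t_2|^{\beta}$.

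For the second component I first need a uniform bound on $u_{tt}$ in a negative-order space. Using the equation, $u_{tt}=\Delta u-\effip u_t-f(u)$. On $\mathscr{A}$ the scalar $\effip$ is bounded and $u_t\in H^{\beta}\hookrightarrow\HH$, so the damping term is bounded in $\HH$; the critical growth $|f(s)|\le C(1+|s|^3)$ (a double integration of \eqref{f_cond1}) together with $H^1\hookrightarrow L^6$ gives $\|f(u)\|\le C(1+\|u\|_6^3)\le C$; and $\Delta u$ is bounded in $H^{\beta-1}$ because $u\in H^{1+\beta}$. Since $\HH=H^0\hookrightarrow H^{\beta-1}$ for $\beta<1$, it follows that $u_{tt}$ is uniformly bounded in $H^{\beta-1}$, so $\|u_t(t_1)-u_t(t_2)\|_{H^{\beta-1}}\le C|t_1-t_2|$, while $\|u_t(t_1)-u_t(t_2)\|_{H^{\beta}}\le C$. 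Interpolating $\HH$ between $H^{\beta-1}$ and $H^{\beta}$ via $\|g\|\le\|g\|_{H^{\beta-1}}^{\beta}\|g\|_{H^{\beta}}^{1-\beta}$, valid because $0=\beta(\beta-1)+(1-\beta)\beta$, gives $\|u_t(t_1)-u_t(t_2)\|_{\HH}\le C|t_1-t_2|^{\beta}$. Summing the two estimates proves the corollary with $\theta=\beta=\tfrac{2}{7}$.

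The only genuinely technical step is the uniform $H^{\beta-1}$ bound on $u_{tt}$; the limiting term is $\Delta u$, which is precisely why the negative index $\beta-1$ rather than $\HH$ appears, and the control of $f(u)$ rests on the growth condition \eqref{f_cond1} and the three-dimensional Sobolev embedding. Everything else is the invariance and boundedness of $\mathscr{A}$ supplied by Theorem \ref{attractor_reg} together with standard Sobolev interpolation.
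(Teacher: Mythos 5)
Your proposal is correct and takes essentially the same route as the paper: exploit the invariance of $\mathscr{A}$ and the uniform $H^{1+\beta}(\Omega)\times H^{\beta}(\Omega)$ bound from Theorem \ref{attractor_reg}, write the increments as time integrals of $u_t$ and $u_{tt}$ (the latter bounded in a negative-order space via the equation), and conclude by Sobolev interpolation. The only difference is cosmetic: you interpolate $H^1$ between $H^{\beta}$ and $H^{1+\beta}$ and $L^2$ between $H^{\beta-1}$ and $H^{\beta}$, whereas the paper interpolates between $L^2$ and $H^{1+\beta}$ and between $H^{-1}$ and $H^{\beta}$, so you obtain the slightly better exponent $\theta=\beta$ in place of the paper's $\theta=\beta/(1+\beta)$.
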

    \begin{proof}
By virtue of the interpolation inequality, we infer that
    \begin{equation*}
        \begin{aligned}
            \|u(t_1)-u(t_2)\|_{H^1_0}
            &\leq C\|u(t_1)-u(t_2)\|^{\theta} \|u(t_1)-u(t_2)\|^{1-\theta}_{H^{1+\beta}} \leq C_{\mathscr{A}}\Big\|\int_{t_1}^{t_2} u_t(s)ds\Big\|^{\theta} \\
            &\leq C_{\mathscr{A}}|t_2-t_1|^{\theta},
        \end{aligned}
    \end{equation*}
    where $\theta= \frac{\beta}{1+\beta}$, $\beta=\frac{2}{7}$, and similarly
    \begin{equation*}
        \begin{aligned}
            \|u_t(t_1)-u_t(t_2)\|
            &\leq C\|u_t(t_1)-u_t(t_2)\|_{H^{-1}}^{\theta} \|u_t(t_1)-u_t(t_2)\|_{H^{\beta}}^{1-\theta} \leq C_{\mathscr{A}} \Big\|\int_{t_1}^{t_2} u_{tt}(s)ds\Big\|^{\theta}_{H^{-1}} \\
            &\leq C_{\mathscr{A}}|t_2-t_1|^{\theta}.
        \end{aligned}
    \end{equation*}
  \end{proof}

    \begin{lemma}\label{u_decay_equiv}
        There exists $r_0>0$ such that any $(u,u_t)$ with initial data $(u_0,u_1)\in B(0,r_0)\subset \VV\times\HH$ converges to zero at a polynomial rate as follows:
        \begin{equation}\label{es_decay}
            C_1(t+k_1I_u(0)^{-\frac{p}{2}})^{-\frac{1}{p}} \leq \|(u,u_t)\|_{\VV\times\HH} \leq C_2(t+k_2I_u(0)^{-\frac{p}{2}})^{-\frac{1}{p}},
        \end{equation}
        where $I_u=\|\nabla u\|^2+\|u_t\|^2$ and $C_i,k_i>0$ are independent of $u$ and $t$.
    \end{lemma}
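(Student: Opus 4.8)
The plan is to reduce the two-sided decay estimate \eqref{es_decay} to matching two-sided bounds on the dissipation rate of the energy $E(t)=\tfrac12 I_u(t)+\int_\Omega F(u)\,dx$. First I would record, by differentiating the energy equality \eqref{energy_eq}, that along any trajectory
$$E'(t) = -I_{u,p}(t)\,\|u_t(t)\|^2,$$
and, multiplying \eqref{Problem} by $u$, the auxiliary identity
$$\frac{d}{dt}(u_t,u) = \|u_t\|^2 - \|\nabla u\|^2 - I_{u,p}(u_t,u) - (f(u),u).$$
The crucial preliminary is a coercivity/equivalence statement valid near the origin. Using Assumption \ref{assum_2} ($f(0)=0$, $f'(0)>-\lambda_1$) together with the growth bound \eqref{f_cond1}, I would first show $|f(s)-f'(0)s|\le C(s^2+|s|^3)$, and hence, via the Sobolev embedding, that $\int_\Omega F(u) = \tfrac12 f'(0)\|u\|^2 + O(\|\nabla u\|^3)$ and $(f(u),u)=f'(0)\|u\|^2+O(\|\nabla u\|^3)$. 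Combined with $\|u\|^2\le\lambda_1^{-1}\|\nabla u\|^2$, this yields, for $\|\nabla u\|$ small, both the equivalence $E(t)\approx I_u(t)$ and the coercivity $\|\nabla u\|^2+(f(u),u)\ge c_0\|\nabla u\|^2$ with $c_0>0$. Since $E$ is non-increasing and equivalent to $I_u$, choosing $r_0$ small enough guarantees that a trajectory launched from $B(0,r_0)$ remains in this small neighbourhood for all $t\ge0$, so that all the local estimates persist.

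For the lower bound (the solution cannot decay faster than $t^{-1/p}$) I would argue directly: since $\|u_t\|^2\le I_u$ and $I_{u,p}\le C I_u^{p/2}$, the dissipation identity gives $E'\ge -C I_u^{(p+2)/2}\ge -C' E^{(p+2)/2}$ near the origin. Integrating the resulting differential inequality for $E^{-p/2}$ produces $E(t)\ge\big(E(0)^{-p/2}+Ct\big)^{-2/p}$, which transfers to the left inequality in \eqref{es_decay} through $E\approx I_u$ and $E(0)\approx I_u(0)$.

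The harder direction is the upper bound, where the degeneracy of the damping must be overcome to show that the energy actually dissipates at the rate $E'\lesssim -E^{(p+2)/2}$; the bare identity $E'=-I_{u,p}\|u_t\|^2$ is insufficient because $\|u_t\|$ may vanish instantaneously and the energy does not decay along $\|\nabla u\|$. Here I would introduce the weighted perturbed functional
$$\Phi(t)=E(t)+\epsilon\,I_u(t)^{p/2}(u_t,u),$$
the weight $I_u^{p/2}$ being chosen so that every resulting term is homogeneous of degree $(p+2)/2$ in $I_u$. Differentiating and using the two identities above, the good contributions are $-I_{u,p}\|u_t\|^2$ from $E'$ and $-\epsilon I_u^{p/2}\big(\|\nabla u\|^2+(f(u),u)\big)\le -c_0\epsilon I_u^{p/2}\|\nabla u\|^2$ from the multiplier term, which together dominate $-c\,\epsilon I_u^{(p+2)/2}$.

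The main technical work, and the step I expect to be the principal obstacle, is to show that the remaining terms, namely $\tfrac{p}{2}\epsilon I_u^{p/2-1}I_u'(u_t,u)$, the $\epsilon I_u^{p/2}\|u_t\|^2$ produced by $\tfrac{d}{dt}(u_t,u)$, and $\epsilon I_u^{p/2}I_{u,p}(u_t,u)$, are all absorbable. Using $|(u_t,u)|\le C I_u$ and $|I_u'|\le C I_u$ near the origin together with Young's inequality, each of these is either $o\big(I_u^{(p+2)/2}\big)$ (carrying an extra small factor $I_u^{p/2}$) or of the form $C\epsilon\,I_u^{p/2}\|u_t\|^2$; the latter is swallowed by the $\epsilon$-free dissipation $-I_{u,p}\|u_t\|^2$ once $\epsilon$ is small. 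The delicate bookkeeping is to perform these absorptions simultaneously so as to keep both the $\|u_t\|^2$- and the $\|\nabla u\|^2$-dissipation strictly negative; in particular the cross term arising from $(f(u),u_t)(u_t,u)$ should be routed, via $\|\nabla u\|^2\le I_u$, into the $\epsilon$-free $\|u_t\|^2$ term rather than competing with the $O(\epsilon)$ coefficient of the $\|\nabla u\|^2$ term. Having verified $\Phi\approx I_u$ (so that in particular $I_u^{(p+2)/2}\ge c\,\Phi^{(p+2)/2}$) and $\Phi'\le -c\,\Phi^{(p+2)/2}$, I would integrate the inequality for $\Phi^{-p/2}$ to obtain $\Phi(t)\le\big(\Phi(0)^{-p/2}+Ct\big)^{-2/p}$ and translate back through $\Phi\approx I_u$ to the right inequality in \eqref{es_decay}, recalling $\|(u,u_t)\|_{\VV\times\HH}^2=I_u$.
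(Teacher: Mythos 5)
Your proposal is correct and follows essentially the same route as the paper: the same equivalence $E\approx I_u$ near the origin with a continuity/trapping argument, the same differential inequality $E'\geq -CE^{(p+2)/2}$ for the lower bound, and for the upper bound the same perturbed functional $E+\epsilon(\mathrm{weight})^{p/2}(u_t,u)$ yielding $\Phi'\leq -c\,\Phi^{(p+2)/2}$. The only (cosmetic) differences are that the paper takes the weight $E^{p/2}$ rather than $I_u^{p/2}$ and disposes of the correction term from differentiating the weight by exploiting the sign $E'\leq 0$, whereas you estimate $I_u'$ through the equation and route the cross term $(f(u),u_t)(u_t,u)$ into the $\epsilon$-free $\|u_t\|^2$ dissipation — a point you correctly identify as the delicate step.
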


    \begin{proof}
      {\bf Part I.} Firstly we prove the left inequality by the continuity argument. Since this process will be used several times in the paper, we present here the details.

      Multiplying equation \eqref{Problem} by $u_t$ and integrating over $\Omega$, we have
    \begin{equation}\label{multiply_u_t}
        \frac{d}{dt} E(t) + I_{u, p} \|u_t\|^2 = 0,
    \end{equation}
    where $E(t)$ is defined in Lemma {\rm\ref{well-posed}}. Setting
    \begin{equation*}
        F_*(s)\triangleq F(s)-\frac{1}{2}f'(0)s^2 = \int_0^s\int_0^{\tau}\int_0^{\xi} f''(\zeta)d\zeta d\xi d\tau,
    \end{equation*}
we know from the assumption on $f$ that $|F_*(s)|\leq C(|s|^3 + |s|^4)$ for $s\in\mathbb{R}$ and thereby
    \begin{equation*}
        \left|\into F_*(u)dx \right| \leq C(\|\nabla u\|^3 + \|\nabla u\|^4).
    \end{equation*}
    Furthermore, since $f'(0)>-\lambda_1$ and
    \begin{equation*}
        E = \frac{1}{2}\|u_t\|^2 + \frac{1}{2}\|\nabla u\|^2 + \frac{f'(0)}{2}\|u\|^2 + \into F_*(u) dx,
      \end{equation*}
      there exists $\epsilon_1>0$ such that if $\|\nabla u\|\leq \epsilon_1$, we can find uniform constants $l_2>l_1>0$ satisfying
    \begin{equation}\label{E_equiv}
        l_1 I_u \leq E \leq l_2 I_u.
    \end{equation}

Let $r_0=\sqrt{\frac{l_1}{l_2}}\frac{\epsilon_1}{2}$ and $(u_0, u_1)\in B(0, r_0)\subset H^1_0(\Omega)\times L^2(\Omega)$, i.e. $I_u(0)<r_0^2=\frac{l_1\epsilon_1^2}{4l_2}$. We claim that $\|\nabla u(t)\|$ keeps smaller than $\epsilon_1$ for all $t\geq 0$. In fact, since $u\in C([0, \infty); H^1_0(\Omega))$ and $\|\nabla u_0\|<r_0<\frac{\epsilon_1}{2}$, there exists a maximal interval $[0, T)$ on which $\|\nabla u\|<\epsilon_1$ and \eqref{E_equiv} holds. 
Since $E(t)$ is monotonically decreasing in $t$, we have for $s\in [0, T)$ $E(s) \leq E(0) \leq l_2 I_u(0) = \frac{l_1\epsilon_1^2}{4}$ and further $I_u(s)\leq l_1^{-1}E(s)\leq \frac{\epsilon_1^2}{4}$. Therefore, 
    \begin{equation*}
        \|\nabla u(s)\| \leq I_u(s)^{\frac{1}{2}}\leq \frac{\epsilon_1}{2}\quad \mathrm{for}~s\in[0, T).
    \end{equation*}
    By the continuity argument, we conclude that $T=\infty$, i.e., the claim is true, and \eqref{E_equiv} holds for all $t\geq 0$.

    As a result, we infer from \eqref{multiply_u_t} and \eqref{E_equiv} that
    \begin{equation*}
            0\leq \frac{d}{dt} E(t) + I_{u,p}(t)I_u(t) \leq \frac{d}{dt} E(t) + C_pI_u(t)^{\frac{p}{2}+1} 
            \leq \frac{d}{dt} E(t) + C_pl_1^{-\frac{p}{2}-1} E^{\frac{p}{2}+1}(t),
    \end{equation*}
and thereby, for $t\geq 0$
    \begin{equation*}
       \|\nabla u(t)\|^2 + \|u_t(t)\|^2 \geq l_2^{-1}E(t) \geq l_2^{-1}\left(E(0)^{-\frac{p}{2}}+C t\right)^{-\frac{2}{p}}\geq l_2^{-1}\Big([l_1I_u(0)]^{-\frac{p}{2}}+C t\Big)^{-\frac{2}{p}}. 
    \end{equation*}
    This yields the first inequality by choosing $C_1, k_1$ properly.
    
    {\bf Part II.} Now we prove the right one. 
    Multiplying equation \eqref{Problem} by $u$ and integrating over $\Omega$, we have
    \begin{equation*}
        \frac{d}{dt}(u_t,u) - \|u_t\|^2 + \|\nabla u\|^2 + (f(u),u) + \effip(u_t,u) = 0.
    \end{equation*}
Note that
\begin{align*}
  f(s)s=f'(0)s^2+\int_0^s\int_0^\tau f''(\zeta)d\zeta d\tau\cdot s.
\end{align*}
Using the conditions on $f$ and the fact that $I_u$ keeps always small by the continuity argument in Part I, one can take $r_0$ small enough to obtain
    \begin{equation}\label{u_decay_equa2}
        \frac{d}{dt}(u_t,u) + C\|\nabla u\|^2 \leq 2\|u_t\|^2.
    \end{equation}
    Combining \eqref{multiply_u_t} and \eqref{E_equiv}, we also have
    \begin{equation}\label{u_decay_equa1}
        \frac{d}{dt}E(t) + C_pE(t)^{\frac{p}{2}}\|u_t\|^2 \leq 0.
    \end{equation}
    Now choose $\epsilon\in (0,\frac{C_p}{4})$. Multiplying \eqref{u_decay_equa2} by $\epsilon E(t)^{\frac{p}{2}}$ and summing it with \eqref{u_decay_equa1}, we get
    \begin{align*}
        \frac{d}{dt}E(t) + C_pE(t)^{\frac{p}{2}}\|u_t\|^2 + \epsilon E(t)^{\frac{p}{2}} \frac{d}{dt}(u_t,u) + \epsilon C E(t)^{\frac{p}{2}}\|\nabla u\|^2 &\leq 2\epsilon E(t)^{\frac{p}{2}}\|u_t\|^2,\\
            \left(1-\frac{\epsilon p}{2}E(t)^{\frac{p}{2}-1}(u_t,u)\right) \frac{d}{dt}E(t) 
            + \epsilon\frac{d}{dt}\left[E(t)^{\frac{p}{2}}(u_t,u)\right]+\frac{C_p}{2}&E(t)^{\frac{p}{2}}\|u_t\|^2 \\+ \epsilon C & E(t)^{\frac{p}{2}}\|\nabla u\|^2 \leq 0.
    \end{align*}
    Since $E(t)$ is decreasing, we deduce that for $r_0$ small enough
    \begin{align*}
      \left(1-\frac{\epsilon p}{2}E(t)^{\frac{p}{2}-1}(u_t,u)\right) \frac{d}{dt}E(t)\geq 2 \frac{d}{dt}E(t),
    \end{align*}
    and therefore
    \begin{equation*}
        2\frac{d}{dt}\left[E(t) + \frac{\epsilon}{2}E(t)^{\frac{p}{2}}(u_t,u)\right] + \epsilon C E(t)^{\frac{p}{2}}\effi \leq 0.
    \end{equation*}
    Noticing that the quantities $I_u$, $E(t)$ and $E(t) + \frac{\epsilon}{2}E(t)^{\frac{p}{2}}(u_t,u)$ are equivalent, we end up with
    \begin{equation*}
        \frac{d}{dt}\left[E(t) + \frac{\epsilon}{2}E(t)^{\frac{p}{2}}(u_t,u)\right] + C\left[E(t) + \frac{\epsilon}{2}E(t)^{\frac{p}{2}}(u_t,u)\right]^{\frac{p}{2}+1} \leq 0.
    \end{equation*}
    Therefore, by choosing properly $C_2$ and $k_2$, the desired inequality follows.
  \end{proof}
  \begin{remark}\label{regul-decay_remark_decay}
    Lemma \ref{u_decay_equiv} can be partially strengthened as follows:

    For each $r>0$, there exist $C_1, k_1$, depending on $r$, such that any $(u,u_t)$ with initial data $(u_0,u_1)\in B(0,r)\subset \VV\times\HH$ satisfies 
    \begin{equation*}
      \|(u,u_t)\|_{\VV\times\HH}\geq C_1(t+k_1I_u(0)^{-\frac{p}{2}})^{-\frac{1}{p}}.
    \end{equation*}

To do this, it is sufficient to choose $C_1$ smaller such that $C_1\leq k_1^{\frac{1}{p}}$. In fact, if $\|(u_0, u_1)\|_{\VV\times\HH}>r_0$ and $\|(u, u_t)\|_{\VV\times\HH}$ reaches $r_0$ for the first time at $t_0$, we know from Lemma \ref{u_decay_equiv} that for this smaller $C_1$ and all $t\geq t_0$
\begin{align*}
  \|(u, u_t)\|_{\VV\times\HH}&\geq C_1(t-t_0+k_1r_0^{-p})^{-\frac{1}{p}}\geq C_1(t+k_1r_0^{-p}r^pI_u(0)^{-\frac{p}{2}})^{-\frac{1}{p}}.
\end{align*}
On the other hand, since $\|(u, u_t)\|_{\VV\times\HH}\geq r_0$ for $t\in[0, t_0]$, it also holds
\begin{align*}
  \|(u, u_t)\|_{\VV\times\HH}\geq r_0\geq C_1(k_1r_0^{-p})^{-\frac{1}{p}}\geq C_1(t+k_1r_0^{-p}r^pI_u(0)^{-\frac{p}{2}})^{-\frac{1}{p}},
\end{align*}
where $C_1\leq k_1^{\frac{1}{p}}$ is used. This gives the desired estimate by replacing $k_1$ with $k_1r_0^{-p}r^p$. 
  \end{remark}

    \section{Dimension estimates}\label{sec_dimensions}
        
    \begin{Def}
        Let $X$ be a metric space and $K\subset X$. Let $N(K,\epsilon)$ denote the minimum number of closed balls of radius $\epsilon$ with centres in $K$ required to cover $K$. The fractal dimension (or upper box-counting dimension) of $K$ is defined by
        $$
        d_B(K) = \limsup_{\epsilon\to 0} \frac{\ln N(K,\epsilon)}{-\ln \epsilon}.
        $$
    \end{Def}
    Usually it is simpler to calculate the fractal dimension by taking the (superior) limit through a discrete sequence $\{\epsilon_k\}$, rather than a continuous one, as in the following lemma.

    \begin{lemma}[\cite{MR2767108}]\label{box_dim}
        If $\{\epsilon_k\}$ is a decreasing sequence tending to zero with $\epsilon_{k+1} \geq \alpha \epsilon_k$ for some $\alpha\in (0,1)$, then
        $$
        d_B(K) = \limsup_{k\to \infty} \frac{\ln N(K,\epsilon_k)}{-\ln \epsilon_k}.
        $$
    \end{lemma}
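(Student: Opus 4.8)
The plan is to establish the two inequalities between $d_B(K)$ and the discrete $\limsup$ and then combine them into the asserted equality. Throughout write $\phi(\epsilon)=\frac{\ln N(K,\epsilon)}{-\ln\epsilon}$ for $\epsilon\in(0,1)$. One inequality, $\limsup_{k\to\infty}\phi(\epsilon_k)\leq d_B(K)$, is free: since $\{\epsilon_k\}$ is one particular sequence decreasing to $0$, the $\limsup$ of $\phi$ along it is dominated by the $\limsup$ of $\phi$ over all $\epsilon\to 0$. The content of the lemma is therefore the reverse inequality $d_B(K)\leq\limsup_{k\to\infty}\phi(\epsilon_k)$, and this is where the hypothesis $\epsilon_{k+1}\geq\alpha\epsilon_k$ is used.

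For the reverse direction I would first record the monotonicity of $N(K,\cdot)$: if $\epsilon'\leq\epsilon$, then every cover of $K$ by closed balls of radius $\epsilon'$ is also a cover by balls of radius $\epsilon$ (same centres), so $N(K,\epsilon)\leq N(K,\epsilon')$; thus $N(K,\cdot)$ is non-increasing. Now fix an arbitrary small $\epsilon$. Because $\{\epsilon_k\}$ decreases to $0$, there is a unique index $k$ with $\epsilon_{k+1}\leq\epsilon<\epsilon_k$. Monotonicity gives $N(K,\epsilon)\leq N(K,\epsilon_{k+1})$, while $\epsilon<\epsilon_k$ gives $-\ln\epsilon>-\ln\epsilon_k>0$ (for $\epsilon_k<1$). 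Since $N\geq 1$ makes all the logarithms of $N$ nonnegative, combining these yields
\[
\phi(\epsilon)=\frac{\ln N(K,\epsilon)}{-\ln\epsilon}\leq\frac{\ln N(K,\epsilon_{k+1})}{-\ln\epsilon_k}.
\]

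The only delicate point, and the main obstacle, is that the numerator here is indexed by $k+1$ while the denominator is indexed by $k$; I must replace $-\ln\epsilon_k$ by $-\ln\epsilon_{k+1}$ to produce $\phi(\epsilon_{k+1})$, and this is exactly what the $\alpha$-regularity controls. Taking logarithms in $\epsilon_{k+1}\geq\alpha\epsilon_k$ gives $-\ln\epsilon_{k+1}\leq-\ln\epsilon_k+|\ln\alpha|$, so passing from $\epsilon_k$ to $\epsilon_{k+1}$ in the denominator costs only the fixed additive constant $|\ln\alpha|$; hence, for $k$ large enough that $-\ln\epsilon_{k+1}-|\ln\alpha|>0$,
\[
\frac{\ln N(K,\epsilon_{k+1})}{-\ln\epsilon_k}\leq\frac{\ln N(K,\epsilon_{k+1})}{-\ln\epsilon_{k+1}}\cdot\frac{-\ln\epsilon_{k+1}}{-\ln\epsilon_{k+1}-|\ln\alpha|}=\phi(\epsilon_{k+1})\cdot\frac{-\ln\epsilon_{k+1}}{-\ln\epsilon_{k+1}-|\ln\alpha|}.
\]
As $\epsilon\to 0$ one has $k\to\infty$, so $-\ln\epsilon_{k+1}\to\infty$ and the correction factor tends to $1$, the bounded error $|\ln\alpha|$ being negligible against a denominator growing to infinity. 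Taking $\limsup$ as $\epsilon\to 0$ therefore gives $d_B(K)\leq\limsup_{k\to\infty}\phi(\epsilon_{k+1})=\limsup_{k\to\infty}\phi(\epsilon_k)$, which together with the free inequality proves the equality.
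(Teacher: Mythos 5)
Your proof is correct. The paper offers no proof of this lemma at all---it is quoted verbatim from \cite{MR2767108}---and your argument (monotonicity of $N(K,\cdot)$, bracketing an arbitrary $\epsilon$ by $\epsilon_{k+1}\leq\epsilon<\epsilon_k$, and using $\epsilon_{k+1}\geq\alpha\epsilon_k$ to show that shifting the index in the denominator costs only a fixed additive $|\ln\alpha|$, which is negligible as $-\ln\epsilon_{k+1}\to\infty$) is exactly the standard proof of this classical fact as found in that reference.
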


    Consider the following degenerate evolutionary problem
    \begin{equation}\label{abstract_equa}
        u_t + Au = 0
    \end{equation}
on a Hilbert space $X$, where $A$ is a nonlinear unbounded operator and is degenerate at the origin. Suppose that this abstract problem possesses a global attractor $\mathscr{A}$. We would like to figure out the fractal dimension of $\mathscr{A}$. However, it is not able to apply directly the method of \cite{MR2767108} in this setting because the derivative of $S(t)$ may not be a compact perturbation of a contractive operator near the origin where the degeneration occurs. By the similar reason, the quasi-stability method does not work either, since the quasi-stability inequality is violated near the origin. 

    Observing that $A$ is non-degenerate outside of a neighborhood of the origin, we embrace tentatively the belief that the properties of $\mathscr{A}$ away from the origin are good and would like to deal with the degenerate region primarily.

    \begin{thm}\label{main}
        Let $\mathscr{A}$ be the global attractor of the dynamic system $S(t)$ on a Hilbert space $(X,\Vert\cdot\Vert)$. Assume that $0\in \mathscr{A}$ and the fractal dimension of $\mathscr{A}\backslash B(0,\epsilon)$ is finite for each $\epsilon > 0$. Let $\{\epsilon_m\}_{m=0}^{\infty}$ be a decreasing sequence and $\{t_m\}_{m=0}^{\infty}$ an increasing sequence with $t_0=0$, such that
        \begin{gather}
          \epsilon_m \rightarrow 0,~~\epsilon_{m+1} \geq \alpha \epsilon_m \quad\textrm{for some }\alpha\in (0,1),\label{cond_epsilon}\\
            \sup_{u\in \mathscr{A}\cap \overline{B(0,\epsilon_0)}}\|S(t)u\|\leq \epsilon_m \quad \textrm{for }t\geq t_m\label{cond_decay}
        \end{gather}
        and
        \begin{equation}\label{cond_limit}
            d_0 \stackrel{\Delta}{=} \limsup_{m\to\infty} \frac{\ln t_m}{-\ln \epsilon_m} <\infty.
        \end{equation}
        Suppose further that $S(t)u_0$ is uniformly H\"older continuous on $(0,\infty)\times[\mathscr{A}\cap B(0,\epsilon_0)]$, i.e., there exist $\theta\in(0,1]$ and $L>0$ such that for $t_1, t_2>0$ and $u_1, u_2\in \mathscr{A}\cap B(0,\epsilon_0)$
        \begin{equation}\label{Holder_cond}
            \|S(t_1)u_1-S(t_2)u_2\|\leq L(|t_1-t_2|+\|u_1-u_2\|)^{\theta}.
        \end{equation}
        Then $d_B(\mathscr{A}) < \infty$.
    \end{thm}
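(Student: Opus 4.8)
The plan is to estimate the covering number $N(\mathscr{A},\epsilon_m)$ along the discrete scale $\{\epsilon_m\}$ and then read off the fractal dimension via Lemma \ref{box_dim}, which is legitimate because of \eqref{cond_epsilon}. I split $\mathscr{A}$ into three pieces at scale $\epsilon_m$: the core $\mathscr{A}\cap\overline{B(0,\epsilon_m)}$, the degenerate shell $\mathscr{A}\cap\{\epsilon_m<\|u\|\le\epsilon_0\}$, and the nondegenerate exterior $\mathscr{A}\cap\{\|u\|>\epsilon_0\}$. The core is covered by a single ball centred at $0\in\mathscr{A}$. The exterior lies in $\mathscr{A}\setminus B(0,\epsilon_0)$, whose fractal dimension is finite by hypothesis, so for any $d'>d_B(\mathscr{A}\setminus B(0,\epsilon_0))$ it is covered by at most $C\epsilon_m^{-d'}$ balls for $m$ large. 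The entire difficulty concentrates in the degenerate shell.

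The key geometric observation is that, up to the core, the degenerate shell is swept out by a single space--time tube issuing from the fixed-complexity entrance sphere $\Sigma:=\mathscr{A}\cap\{\|u\|=\epsilon_0\}$. I will show that every $p\in\mathscr{A}$ with $\epsilon_m<\|p\|\le\epsilon_0$ can be written as $p=S(t)q$ with $0\le t\le t_m$ and $q\in\Sigma$. Indeed, since $\mathscr{A}$ is a global attractor, $p$ lies on a complete bounded trajectory $\gamma:\mathbb{R}\to\mathscr{A}$ with $\gamma(0)=p$. If this trajectory stayed in $\overline{B(0,\epsilon_0)}$ throughout $[-t_m,0]$, then writing $p=S(t_m)\gamma(-t_m)$ with $\gamma(-t_m)\in\mathscr{A}\cap\overline{B(0,\epsilon_0)}$ would force $\|p\|\le\epsilon_m$ by the decay bound \eqref{cond_decay}, contradicting $\|p\|>\epsilon_m$. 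Hence the backward trajectory exits $\overline{B(0,\epsilon_0)}$ at some first time $-t^{*}\in[-t_m,0)$, where by continuity $\|\gamma(-t^{*})\|=\epsilon_0$; taking $q=\gamma(-t^{*})\in\Sigma$ and $t=t^{*}$ gives the claim. Thus the shell is contained in $\mathcal{T}:=\{S(t)q:0\le t\le t_m,\ q\in\Sigma\}$, and the whole tube remains in $\overline{B(0,\epsilon_0)}$ by \eqref{cond_decay} with $m=0$, precisely the region where the Hölder estimate \eqref{Holder_cond} is available.

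It remains to cover $\mathcal{T}$ at scale $\epsilon_m$, and here both the temporal and spatial Hölder continuity enter. I partition $[0,t_m]$ into $P$ subintervals of length $\delta=t_m/P$ with nodes $\tau_i$, choosing $P$ so that $L\delta^{\theta}\le\epsilon_m/2$, i.e. $P\approx t_m\,\epsilon_m^{-1/\theta}$. On the $i$-th slab, temporal Hölder continuity gives $\|S(t)q-S(\tau_i)q\|\le L\delta^{\theta}\le\epsilon_m/2$, so it suffices to cover each snapshot $S(\tau_i)\Sigma$ at scale $\epsilon_m/2$ and inflate the radius to $\epsilon_m$. Since $S(\tau_i)$ is $\theta$-Hölder with constant $L$, a cover of $\Sigma$ by balls of radius $\rho$ maps to a cover of $S(\tau_i)\Sigma$ by balls of radius $L\rho^{\theta}$ with centres in $S(\tau_i)\Sigma$; setting $L\rho^{\theta}=\epsilon_m/2$ and using $d_B(\Sigma)\le d_B(\mathscr{A}\setminus B(0,\epsilon_0))<\infty$, each snapshot needs at most $C\epsilon_m^{-d'/\theta}$ balls, uniformly in $i$. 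Multiplying by $P$ yields $N(\mathcal{T},\epsilon_m)\le C\,t_m\,\epsilon_m^{-(d'+1)/\theta}$. Collecting the three pieces, $N(\mathscr{A},\epsilon_m)\le 1+C\epsilon_m^{-d'}+C\,t_m\,\epsilon_m^{-(d'+1)/\theta}$, and Lemma \ref{box_dim} together with $\tfrac{\ln t_m}{-\ln\epsilon_m}\to d_0$ from \eqref{cond_limit} gives $d_B(\mathscr{A})\le d_0+\tfrac{d'+1}{\theta}$; letting $d'\downarrow d_B(\mathscr{A}\setminus B(0,\epsilon_0))$ finishes the proof.

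The main obstacle, and the genuinely new point, is the backward-trajectory argument of the second paragraph: recognizing that the decay hypothesis \eqref{cond_decay} forces every nonzero point of the degenerate neighbourhood to have emerged, within time at most $t_m$, from the single sphere $\Sigma$ of fixed (finite) complexity. Everything else is bookkeeping: the Hölder continuity converts this tube into a cover whose cardinality is controlled by $P$ (time) times the snapshot number (space), and \eqref{cond_limit} is exactly what guarantees the tube length $t_m$ does not outpace the scale $\epsilon_m^{-1/\theta}$. A secondary technical nuisance is that \eqref{Holder_cond} is stated on the open ball, whereas $\Sigma$ sits on its boundary; this is resolved either by reading \eqref{Holder_cond} on the closed ball (valid by continuity of $S(t)$ and compactness of $\mathscr{A}$) or by placing the entrance sphere at a radius slightly inside $\epsilon_0$.
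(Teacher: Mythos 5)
Your proposal is correct and follows essentially the same route as the paper: the paper proves, by the same backward-trajectory/first-exit argument that you spell out, that the shell $\mathscr{A}\cap[B(0,\epsilon_0)\setminus\overline{B(0,\epsilon_m)}]$ is contained in $\bigcup_{0\le s\le t_m}S(s)\mathscr{A}_{\epsilon_1}^c$, where its entrance set is the annulus $\mathscr{A}_{\epsilon_1}^c=\mathscr{A}\cap[B(0,\epsilon_0)\setminus\overline{B(0,\epsilon_1)}]$ rather than your sphere $\Sigma$ (which automatically sidesteps the boundary issue you flag, just as your ``sphere slightly inside'' fix does), and then pushes a space--time covering through the H\"older map exactly as you do, concluding via Lemma \ref{box_dim} and \eqref{cond_limit}. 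The remaining difference is pure bookkeeping: the paper covers $\mathscr{A}_{\epsilon_1}^c\times[0,t_m]$ at scale $\epsilon_m$ and lands at radius $\sim L\epsilon_m^{\theta}$, whereas you pre-scale time and space to $\sim\epsilon_m^{1/\theta}$ so as to land at radius $\epsilon_m$; both parametrizations give a finite bound (yours, $d_0+\theta^{-1}(d'+1)$, is in fact marginally sharper than the paper's $d_B(\mathscr{A}\setminus\mathscr{A}_{\epsilon_0})+\theta^{-1}(d_B(\mathscr{A}_{\epsilon_1}^c)+d_0+1)$ in the $d_0$ term).
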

    \begin{proof}
      Denote $\mathscr{A}_{\epsilon_0}=\mathscr{A}\cap B(0, \epsilon_0)$ and $\mathscr{A}_{\epsilon_m}^c=\mathscr{A}\cap [B(0, \epsilon_0)\setminus \overline{B(0, \epsilon_m)}]$. Without loss of generality, we suppose that $\epsilon_1<\epsilon_0$. It is easy to verify that $\cup_{s=0}^tS(s) \mathscr{A}_{\epsilon_1}^c\supset \mathscr{A}_{\epsilon_m}^c$ for $t\geq t_m$ by \eqref{cond_decay}. For each $m$, we have
      \begin{align*}
        N(\mathscr{A}_{\epsilon_1}^c\times [0, t_m], \sqrt{2}\epsilon_m)\leq N(\mathscr{A}_{\epsilon_1}^c, \epsilon_m)\times \lceil\epsilon_m^{-1}t_m\rceil,
      \end{align*}
      where $\lceil s\rceil$ denotes the smallest integer larger than or equal to $s$. This means $\mathscr{A}_{\epsilon_1}^c\times [0, t_m]$ has a covering of no more than $N(\mathscr{A}_{\epsilon_1}^c, \epsilon_m)\times \lceil\epsilon_m^{-1}t_m\rceil$ balls of radius $\sqrt{2}\epsilon_m$. The image of this covering under $S(\cdot)$ provides a covering of $\cup_{s=0}^{t_m}S(s) \mathscr{A}_{\epsilon_1}^c$ by sets of diameter no larger than $L(2\sqrt{2}\epsilon_m)^\theta$. Each of these sets is certainly contained in a closed ball of radius $2L(2\sqrt{2}\epsilon_m)^\theta$, i.e., $$N(\cup_{s=0}^{t_m}S(s) \mathscr{A}_{\epsilon_1}^c, 2L(2\sqrt{2}\epsilon_m)^\theta)\leq N(\mathscr{A}_{\epsilon_1}^c, \epsilon_m)\times \lceil\epsilon_m^{-1}t_m\rceil.$$ Note that $2L(2\sqrt{2}\epsilon_m)^\theta\geq \epsilon_m$ for $\epsilon_m$ small enough. In this situation, it follows that
      \begin{align*}
        N(\mathscr{A}_{\epsilon_0}, 2L(2\sqrt{2}\epsilon_m)^\theta)&\leq N(\mathscr{A}_{\epsilon_m}^c, 2L(2\sqrt{2}\epsilon_m)^\theta)+1\leq N(\cup_{s=0}^{t_m}S(s) \mathscr{A}_{\epsilon_1}^c, 2L(2\sqrt{2}\epsilon_m)^\theta)+1\\&\leq N(\mathscr{A}_{\epsilon_1}^c, \epsilon_m)\times \lceil\epsilon_m^{-1}t_m\rceil+1,
      \end{align*}
and thereby
\begin{align*}
  d_B(\mathscr{A})&\leq d_B(\mathscr{A}\setminus \mathscr{A}_{\epsilon_0})+\limsup_{m\to \infty} \frac{\ln N(\mathscr{A}_{\epsilon_0}, 2L(2\sqrt{2}\epsilon_m)^\theta)}{-\ln [2L(2\sqrt{2}\epsilon_m)^\theta]}\\&\leq d_B(\mathscr{A}\setminus \mathscr{A}_{\epsilon_0})+\limsup_{m\to \infty} \frac{\ln [N(\mathscr{A}_{\epsilon_1}^c, \epsilon_m)\times \lceil\epsilon_m^{-1}t_m\rceil+1]}{-\theta\ln\epsilon_m-\ln (2^{1+3\theta/2}L)}\\&=d_B(\mathscr{A}\setminus \mathscr{A}_{\epsilon_0})+\limsup_{m\to \infty} \frac{\ln N(\mathscr{A}_{\epsilon_1}^c, \epsilon_m)+\ln t_m-\ln \epsilon_m}{-\theta\ln\epsilon_m}\\&=d_B(\mathscr{A}\setminus \mathscr{A}_{\epsilon_0})+\theta^{-1}d_B(\mathscr{A}_{\epsilon_1}^c)+\theta^{-1}d_0+\theta^{-1}\\&<\infty.
\end{align*}
This completes the proof.
    \end{proof}  

\begin{comment}
  \begin{remark}
        The condition \eqref{Holder_cond} can be replaced by 
        \begin{align*}
            \Vert S(t_1)u_0 - S(t_2)u_0 \Vert \leq L_1|t_1-t_2|^{\theta}, \quad u_0 \in \mathscr{A}\cap B(0,\epsilon_0),~t_1, t_2 \in(0,\infty),\\
            \Vert S(t)u_1 - S(t)u_2 \Vert \leq L_2(t)\Vert u_1 - u_2 \Vert, \quad u_1,u_2\in \mathscr{A}\cap B(0,\epsilon_0),~t\in(0,\infty),
        \end{align*}
        where $L_2(t)$ increases monotonically in $t$ and satisfies that
        \begin{equation*}
            \limsup_{m\to\infty} \frac{\ln t_m}{-\ln (L_2(t_m)\epsilon_m)} <\infty.
        \end{equation*}
    \end{remark}
\end{comment}
    
    Intuitively, conditions \eqref{cond_epsilon}-\eqref{cond_limit} can be replaced by the following assumption of Theorem \ref{main_2} in the setting of problem \eqref{abstract_equa}.

    \begin{thm}\label{main_2}
      Let $\mathscr{A}$ be the global attractor of the dynamic system $S(t)$ generated by equation \eqref{abstract_equa} on Hilbert space $(X,\Vert\cdot\Vert)$. Assume that $0\in \mathscr{A}$ and the fractal dimension of $\mathscr{A}\backslash B(0,\epsilon)$ is finite for each $\epsilon > 0$. Suppose that there exist $\epsilon_1> 0$, $C>0$, $\alpha > 0$, $l_2\geq 1_1>0$ and a functional $E(u)$ defined on $\mathscr{A}\cap B(0, \epsilon_1)$ such that
      \begin{align}\label{dimensions_es_equiv}
        l_1\|u\|^2\leq E(u)\leq l_2\|u\|^2\quad \textrm{for }u\in \mathscr{A}\cap B(0, \epsilon_1),
      \end{align}
       and, for $u(t)=S(t)u_0$
      \begin{equation}\label{decrease_cond}
        \frac{d}{dt}E(u(t)) + CE(u(t))^{1+\alpha} \leq 0,\quad \textrm{if }u(t)\in \mathscr{A}\cap B(0, \epsilon_1). 
      \end{equation}
Suppose further that $S(t)u_0$ is uniformly $\theta$-H\"older continuous on $(0,\infty)\times[\mathscr{A}\cap B(0,\epsilon_1)]$. Then $d_B(\mathscr{A}) < \infty$.
    \end{thm}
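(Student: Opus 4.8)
The plan is to reduce Theorem \ref{main_2} to Theorem \ref{main} by manufacturing, out of the energy structure \eqref{dimensions_es_equiv}--\eqref{decrease_cond}, two sequences $\{\epsilon_m\}$ and $\{t_m\}$ fulfilling \eqref{cond_epsilon}--\eqref{cond_limit}. The Hölder hypothesis \eqref{Holder_cond} and the finiteness of $d_B(\mathscr{A}\setminus B(0,\epsilon))$ are assumed outright and transfer verbatim once the outer radius $\epsilon_0$ is chosen below $\epsilon_1$, since then $\mathscr{A}\cap B(0,\epsilon_0)\subset \mathscr{A}\cap B(0,\epsilon_1)$.

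First I would fix the outer radius. Because \eqref{decrease_cond} forces $E(u(t))$ to be non-increasing while the orbit stays in $\mathscr{A}\cap B(0,\epsilon_1)$, the equivalence \eqref{dimensions_es_equiv} gives $\|S(t)u_0\|^2\le l_1^{-1}E(u(t))\le l_1^{-1}E(u_0)\le (l_2/l_1)\|u_0\|^2$ so long as $u(t)$ remains in that ball. Choosing $\epsilon_0:=\tfrac12\sqrt{l_1/l_2}\,\epsilon_1<\epsilon_1$ and running the continuity argument exactly as in Part I of the proof of Lemma \ref{u_decay_equiv}, I would conclude that every trajectory starting in $\mathscr{A}\cap\overline{B(0,\epsilon_0)}$ stays in $\mathscr{A}\cap B(0,\epsilon_1)$ for all $t\ge0$, so that \eqref{decrease_cond} holds along the whole forward orbit.

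With the inequality now global in time, I would integrate it: writing $y=E^{-\alpha}$ turns \eqref{decrease_cond} into $y'\ge \alpha C$, whence $E(u(t))\le (E(u_0)^{-\alpha}+\alpha C t)^{-1/\alpha}\le (\alpha C t)^{-1/\alpha}$ and therefore, by \eqref{dimensions_es_equiv},
\begin{equation*}
  \|S(t)u_0\|\le C' t^{-1/(2\alpha)},\qquad C'=l_1^{-1/2}(\alpha C)^{-1/(2\alpha)},
\end{equation*}
uniformly for $u_0\in\mathscr{A}\cap\overline{B(0,\epsilon_0)}$ and all $t>0$. I would then set $\epsilon_m=\epsilon_0\gamma^m$ with $\gamma\in(0,1)$ fixed (so \eqref{cond_epsilon} holds with ratio $\gamma$ and $\epsilon_m\to0$), and $t_0=0$, $t_m=(C'/\epsilon_m)^{2\alpha}$ for $m\ge1$; the decay bound above then yields \eqref{cond_decay} for $m\ge1$, and since $\ln t_m=2\alpha(\ln C'-\ln\epsilon_m)$ while $-\ln\epsilon_m\to\infty$, one computes $d_0=\limsup_m \ln t_m/(-\ln\epsilon_m)=2\alpha<\infty$, giving \eqref{cond_limit}. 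Invoking Theorem \ref{main} finishes the proof. (In the application to \eqref{Problem} one has $\alpha=p/2$, so this reproduces the rate $t^{-1/p}$ of Lemma \ref{u_decay_equiv} and gives $d_0=p$.)

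The step needing the most care is the invariance argument, since \eqref{decrease_cond} is assumed only conditionally on $u(t)\in\mathscr{A}\cap B(0,\epsilon_1)$: I must rule out the orbit escaping the good neighbourhood, which is precisely why the factor $\sqrt{l_2/l_1}$ in \eqref{dimensions_es_equiv} forces the outer radius to be taken strictly below $\epsilon_1$. A related bookkeeping subtlety is that \eqref{dimensions_es_equiv} controls the norm only up to the constant $\sqrt{l_2/l_1}$, so the full ball $\overline{B(0,\epsilon_0)}$ need not contract monotonically in $\|\cdot\|$ and the $m=0$ instance of \eqref{cond_decay} (with $t_0=0$) cannot be read as genuine forward invariance; this is immaterial, however, because in the proof of Theorem \ref{main} only the decay for $m\ge1$ enters the covering estimate, the $m=0$ annulus $\mathscr{A}\cap[B(0,\epsilon_0)\setminus\overline{B(0,\epsilon_0)}]$ being empty.
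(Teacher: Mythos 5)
Your proof is correct and follows essentially the same route as the paper's: both establish forward invariance of a smaller ball via the monotonicity of $E$ and the equivalence \eqref{dimensions_es_equiv}, integrate \eqref{decrease_cond} to obtain the polynomial decay $\|S(t)u_0\|\leq C' t^{-1/(2\alpha)}$, choose geometric radii $\epsilon_m$ with matching times $t_m$ so that $d_0=2\alpha$, and invoke Theorem \ref{main}. Your treatment of the two subtleties---the strict factor $\tfrac{1}{2}$ preventing the orbit from reaching $\partial B(0,\epsilon_1)$, and the observation that the $m=0$ instance of \eqref{cond_decay} is never actually used in the proof of Theorem \ref{main}---is if anything slightly more careful than the paper, which glosses over both points.
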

    \begin{proof}
      Set $\epsilon_0=l_2^{-1/2}l_1^{1/2}\epsilon_1$. It is easy to verify that $u(t)$ stays in $\mathscr{A}\cap B(0, \epsilon_1)$ for all $t\geq 0$ if $u_0\in \mathscr{A}\cap B(0,\epsilon_0)$. Therefore, it is sufficient to show there exist a decreasing sequence $\{\epsilon_m\}_{m=0}^{\infty}$ with $\epsilon_0$ given above and an increasing sequence $\{t_m\}_{m=0}^{\infty}$ with $t_0=0$ satisfying \eqref{cond_epsilon}-\eqref{cond_limit}. Multiplying \eqref{decrease_cond} by $E(u)^{-1-\alpha}$ and integrating from 0 to $t$, we obtain
    \begin{align*}
      E(u(t))^{-\alpha} &\geq\alpha Ct + E(u(0))^{-\alpha},\\
      E(u(t)) &\leq (\alpha Ct + E(u(0))^{-\alpha})^{-\frac{1}{\alpha}},
    \end{align*}
    which implies that
    \begin{align*}
      \|u(t)\|^2 &\leq (\alpha l_1^\alpha C t + l_1^\alpha l_2^{-\alpha}\|u(0)\|^{-2\alpha})^{-\frac{1}{\alpha}}. 
    \end{align*}
For $m\geq 1$, set $\epsilon_m = 2^{-m}\epsilon_0$ and $t_m =\frac{2^{2m\alpha}-l_1^\alpha l_2^{-\alpha}}{\alpha l_1^\alpha C\epsilon_0^{2\alpha}}$. One can easily verify that \eqref{cond_epsilon} and \eqref{cond_decay} hold. Besides, we infer that
    \begin{equation*}
        \limsup_{m\to\infty} \frac{\ln t_m}{-\ln \epsilon_m} = \limsup_{m\to\infty}\frac{\ln(2^{2m\alpha}-l_1^\alpha l_2^{-\alpha}) - \ln(\alpha l_1^\alpha C\epsilon_0^{2\alpha})}{m\ln 2 - \ln \epsilon_0} = 2\alpha,
    \end{equation*}
    which implies \eqref{cond_limit}. 
  \end{proof}

    \section{Finite dimensionality}\label{sec_application}

    In this section, we use Theorem \ref{main_2} to prove the finite dimensionality of the global attractor for problem \eqref{Problem}. 

    The formal linearization of \eqref{Problem} is given by
    \begin{equation}\label{diff_eq} 
        \begin{cases}
            U_{tt} - \Delta U + \effip U_t + f'(u)U \\
            \qquad\qquad\qquad  + p\left[\|\nabla u\|^{p-2}(\nabla u, \nabla U)+\|u_t\|^{p-2}(u_t,U_t)\right]u_t = 0, \\
            U(0)=\xi, U_t(0)=\zeta. 
        \end{cases}
    \end{equation}
    
    \begin{lemma}
        Suppose Assumption \ref{f_assum1} holds. For $t>0$, $S(t)$ is Fr\'echet differentiable on $\VV\times\HH$. Its derivative at $\omega_0=(u_0,u_1)$ is the linear operator
        \begin{equation*}
            L(t; \omega_0): (\xi,\zeta)\mapsto (U(t),U_t(t)),
        \end{equation*}
        where $U$ is the solution of \eqref{diff_eq}. Furthermore, for each $t>0$, $L(t; \omega_0)$ is continuous in $\omega_0$.
    \end{lemma}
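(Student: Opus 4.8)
The plan is to prove the statement in three stages: first show that the linearized problem \eqref{diff_eq} is well-posed so that $L(t;\omega_0)$ is a genuine bounded linear operator, then verify the defining limit of the Fr\'echet derivative through a remainder estimate, and finally deduce continuity in $\omega_0$ by comparing linearized solutions along nearby reference trajectories. Fix $\omega_0=(u_0,u_1)$ and $T>0$ and write $u=u(t)$ for the first component of $S(t)\omega_0$, which together with $u_t$ stays bounded in $\VV\times\HH$ on $[0,T]$ by the dissipativity. With $u$ frozen, \eqref{diff_eq} is a \emph{linear} non-autonomous wave equation for $U$, and I would solve it by a Galerkin scheme (or by recasting it as a first-order system and invoking Lemma \ref{sol_monotone_op}). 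The only delicate point is that all coefficients remain bounded on $[0,T]$: $\effip$ is controlled, the scalar $\|\nabla u\|^{p-2}(\nabla u,\nabla U)$ is bounded by $\|\nabla u\|^{p-1}\|\nabla U\|$, and the potentially singular contribution obeys
$$
\big\|\,\|u_t\|^{p-2}(u_t,U_t)\,u_t\,\big\|\leq \|u_t\|^{p}\|U_t\|,
$$
so no blow-up occurs at the degenerate point $u_t=0$ (here $p>1$ is essential). An energy estimate then gives $\|(U(t),U_t(t))\|_{\VV\times\HH}\leq C(T,\omega_0)\|(\xi,\zeta)\|_{\VV\times\HH}$, so $L(t;\omega_0)$ is bounded and linear.

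For differentiability, set $h=(\xi,\zeta)$, let $u^h$ be the first component of $S(t)(\omega_0+h)$, and introduce the remainder $z=u^h-u-U$ with $(z(0),z_t(0))=0$. One first records the Lipschitz bound $\|(u^h-u,u^h_t-u_t)(t)\|_{\VV\times\HH}\leq C(T,\omega_0)\|h\|$, obtained by the standard energy estimate on the difference of two solutions together with the local Lipschitz bounds for $f$ and for the nonlocal damping already derived in the proof of Theorem \ref{well-posed}. Subtracting the equation for $u$ and the linearization \eqref{diff_eq} from the equation for $u^h$ shows that $z$ solves a linear wave equation with the same principal and damping structure as \eqref{diff_eq}, forced by a term $G$ collecting the second-order remainders of $f$ and of the two $p$-homogeneous damping terms along the perturbation. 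Multiplying by $z_t$, integrating and applying Gronwall reduces everything to the estimate
$$
\int_0^T\|G(s)\|^2\,ds=o(\|h\|^2)\qquad\text{as }\|h\|\to0.
$$

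The contribution of $f$ is $f(u^h)-f(u)-f'(u)(u^h-u)$; by Taylor's formula, \eqref{f_cond1} and the critical embedding $\VV\hookrightarrow L^6$ in $\mathbb{R}^3$, its $\HH$-norm is $O(\|h\|^2)$, which is the routine critical-exponent computation. The real obstacle is the nonlocal damping remainder, and this is where $p\in(1,2)$ matters: the maps $v\mapsto\|v\|^pv$ on $\HH$ and $u\mapsto\|\nabla u\|^p$ on $\VV$ are $C^1$ but their derivatives are only H\"older continuous of exponent $p-1$, so no quadratic Taylor bound is available. What one does have, by the mean-value inequality applied to a H\"older-continuous derivative, is a remainder bounded by $C\|u^h-u\|^p$, hence by $C\|h\|^p$ via the Lipschitz estimate, \emph{uniformly} in the base point (so the degeneracy at the origin is harmless). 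Since $p>1$, this is $o(\|h\|)$, and indeed $\int_0^T\|G\|^2\,ds\leq C\|h\|^{2p}=o(\|h\|^2)$. This H\"older-derivative estimate is the crux of the proof and the step I expect to require the most care.

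Finally, for continuity of $L(t;\cdot)$ at $\omega_0$, let $\omega_0^n\to\omega_0$. By continuous dependence of solutions the reference trajectories converge, $u^n\to u$ in $C([0,T];\VV\times\HH)$, so every coefficient of \eqref{diff_eq} converges in the relevant norm — again using $p>1$ to handle $\|u_t^n\|^{p-2}u_t^n$ near the degenerate point. For fixed data $(\xi,\zeta)$ the difference $U^n-U$ solves a linear equation driven by the differences of these coefficients multiplied by the bounded quantities $U$ and $U_t$; an energy estimate then gives $\|(U^n-U,U^n_t-U_t)(t)\|_{\VV\times\HH}\leq \omega(n)\|(\xi,\zeta)\|_{\VV\times\HH}$ with $\omega(n)\to0$ independent of the data, which is precisely the continuity of $L(t;\omega_0)$ in $\omega_0$.
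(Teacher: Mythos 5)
Your proposal is correct and follows essentially the same route as the paper's proof: first the local Lipschitz bound for $S(t)$ by an energy estimate on the difference of two trajectories, then the remainder $z=u^h-u-U$ viewed as the solution of a linear wave equation forced by the Taylor remainders of $f$ and of the two nonlocal damping operators, closed by an energy estimate plus Gronwall. The one genuine difference is how the damping remainders are bounded: the paper argues qualitatively, showing that for every $\epsilon>0$ the remainders $\mathfrak{G},\mathfrak{H}$ are bounded by $\epsilon\|\tilde{\omega}-\omega\|_X$ once the perturbation is small (using only the continuity of $v\mapsto\|v\|^{p-2}v$ on bounded sets), whereas you invoke the quantitative H\"older-$(p-1)$ modulus $\|\,\|v_1\|^{p-2}v_1-\|v_2\|^{p-2}v_2\|\le C\|v_1-v_2\|^{p-1}$ valid for $1<p<2$, which yields the explicit rate $\|G\|\le C\|h\|^p$ and hence $o(\|h\|)$ with a rate, rather than a mere limit. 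One technical remark on your "crux": your diagnosis of where the H\"older obstruction lives is slightly misplaced. The map $v\mapsto\|v\|^pv$ on $\HH$ is in fact $C^{1,1}$ on bounded sets (its second derivative is $O(\|v\|^{p-1})$, bounded near $0$ precisely because $p>1$), so a quadratic Taylor bound \emph{is} available for it; the genuinely only-H\"older object is the scalar functional $u\mapsto\|\nabla u\|^p$, whose second derivative behaves like $\|\nabla u\|^{p-2}$ and blows up at $\nabla u=0$, and whose remainder gets multiplied by $u_t$ — an independent quantity that need not vanish when $\nabla u$ does — producing exactly the $O(\|h\|^p)$ term (the term $\mathfrak{G}_1$ in the paper's notation). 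Since the H\"older bound you use is weaker than what is true for $\|v\|^pv$, nothing in your argument breaks. Finally, you sketch the continuity of $\omega_0\mapsto L(t;\omega_0)$ via convergence of the coefficients of \eqref{diff_eq} and an energy estimate on $U^n-U$; the paper states this property but omits its proof, so this is a worthwhile addition and the argument you outline is the right one.
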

    \begin{proof}
      Similarly to the proof of Theorem \ref{well-posed}, one may show the existence and uniqueness of \eqref{diff_eq} in $\VV\times\HH$ by the monotone operator theory. We omit the details and prove only $DS(t)\omega_0= L(t; \omega_0)$.

      Denote $X=\VV\times\HH$. Let $\omega_0=(u_0,u_1)\in X$ and $\tilde{\omega}_0=\omega_0 + (\xi,\zeta)\in X$. For $t\geq 0$, denote $\omega(t)=S(t)\omega_0=(u(t),u_t(t))$ and $\tilde{\omega}(t)=S(t)\tilde{\omega}_0=(\tilde{u}(t),\tilde{u}_t(t))$, both of which are uniformly bounded in $X$ due to the dissipation, with the bound depending on $\|\omega_0\|$ and $\|\tilde{\omega}_0\|$.
      
      We first show $S(t)$ is locally Lipschitz in $X$ for each $t\geq 0$. Indeed, the difference $\psi=\tilde{u}-u$ satisfies
    \begin{align}\label{psi_eq}
      \begin{cases}
        \psi_{tt} - \Delta\psi + (\|\nabla \tilde{u}\|^p+\|\tilde{u}_t\|^p)\tilde{u}_t - (\|\nabla u\|^p+\|u_t\|^p) u_t + f(\tilde{u}) - f(u) = 0,  \\
        \psi(0)=\xi, \psi_t(0)=\zeta.
      \end{cases}
    \end{align}
    Multiplying \eqref{psi_eq} by $\psi_t$ and integrating over $\Omega$, we have
    \begin{equation*}
        \begin{aligned}
            &\quad\frac{1}{2}\frac{d}{dt}\|\psi_t\|^2 + \frac{1}{2}\frac{d}{dt}\|\nabla \psi\|^2 + (\|\tilde{u}_t\|^p\tilde{u}_t-\|u_t\|^pu_t,\psi_t) + \|\nabla \tilde{u}\|^p\|\psi_t\|^2 \\
            &= (\|\nabla u\|^p-\|\nabla\tilde{u}\|^p)(u_t,\psi_t) + (f(u)-f(\tilde{u}),\psi_t) \\
            &= -\int_0^1 p\|\nabla u_{\theta}\|^{p-2}(\nabla u_{\theta},\nabla \psi)d\theta\cdot (u_t,\psi_t)- \into\int_0^1 f'(u_\theta)\psi d\theta\cdot \psi_t dx \\
            &\leq C(\|\nabla \psi\|^2 + \|\psi_t\|^2),
        \end{aligned}
    \end{equation*}
where $u_{\theta} = \theta u + (1-\theta)\tilde{u}$. Noticing that $(\|\tilde{u}_t\|^p\tilde{u}_t-\|u_t\|^pu_t,\psi_t)\geq 0$, by means of the Gronwall lemma we have
    \begin{equation}\label{a_es_Lip}
        \|\tilde{\omega}(t)-\omega(t)\|_X^2 \leq e^{2Ct}\|(\xi,\zeta)\|_X^2.
    \end{equation}

To continue, we introduce the operators from $C([0,T];\VV)\cap C^1([0,T];\HH)$ to $C([0, T]; L^2(\Omega))$ by
    \begin{equation*}
        \mathcal{G}(u)= \|\nabla u\|^p u_t,\quad \mathcal{H}(u)= \|u_t\|^p u_t.
    \end{equation*}
    Their Fr\'echet derivatives are given by
    \begin{align*}
        \mathcal{G}'(u)v &= \|\nabla u\|^p v_t + p\|\nabla u\|^{p-2}(\nabla u,\nabla v)u_t,\\
        \mathcal{H}'(u)v &= \|u_t\|^p v_t + p\|u_t\|^{p-2}(u_t,v_t)u_t.
    \end{align*}
    For each $t$, we set
    \begin{align*}
        \mathfrak{G} &\stackrel{\Delta}{=} \mathcal{G}(\tilde{u})-\mathcal{G}(u)-\mathcal{G}'(u)(\tilde{u}-u) = \mathfrak{G}_1+\mathfrak{G}_2,\\
        \mathfrak{H} &\stackrel{\Delta}{=} \mathcal{H}(\tilde{u})-\mathcal{H}(u)-\mathcal{H}'(u)(\tilde{u}-u),
    \end{align*}
    where
    \begin{align*}
      \mathfrak{G}_1& = (\|\nabla\tilde{u}\|^p-\|\nabla u\|^p)u_t - p\|\nabla u\|^{p-2}(\nabla u,\nabla \tilde{u} - \nabla u)u_t,\\
      \mathfrak{G}_2&=(\|\nabla\tilde{u}\|^p-\|\nabla u\|^p)(\tilde{u}_t-u_t). 
    \end{align*}
    For any $\epsilon>0$, since $S(t)$ is Lipschitz, we have, if $(\xi,\zeta)$ is close to zero
    \begin{equation*}
        \begin{aligned}
            \|\mathfrak{G}_1\| 
            &= \Big\| p\int_0^1 \|\nabla u_{\theta}\|^{p-2}(\nabla u_{\theta},\nabla \tilde{u}-\nabla u)d\theta\cdot u_t - p(\|\nabla u\|^{p-2}\nabla u,\nabla \tilde{u}-\nabla u)u_t \Big\| \\
            &\leq p\int_0^1 \big\|\|\nabla u_{\theta}\|^{p-2}\nabla u_{\theta}-\|\nabla u\|^{p-2}\nabla u \big\|d\theta\cdot\|\nabla \tilde{u}-\nabla u\|\|u_t\| \\
            &\leq \frac{\epsilon}{2}\|\nabla \tilde{u} - \nabla u\|
        \end{aligned}
    \end{equation*}
and
    \begin{equation*}
        \|\mathfrak{G}_2\| \leq \frac{\epsilon}{2}\|\tilde{u}_t-u_t\|.
    \end{equation*}
Therefore, we infer that
    \begin{equation}\label{g_overline_estimate}
        \|\mathfrak{G}\| \leq \epsilon\|\tilde{\omega}-\omega\|_X,
    \end{equation}
    and following a similar argument, 
    \begin{equation}\label{h_overline_estimate}
        \|\mathfrak{H}\| \leq \epsilon\|\tilde{\omega}-\omega\|_X.
    \end{equation}

    Now, denote $\Psi=\tilde{u}-u-U$ with $U$ the solution of \eqref{diff_eq}. By calculation we know that $\Psi$ satisfies
    \begin{equation*}
        \begin{aligned}
            \Psi_{tt} - \Delta\Psi 
            &+ \mathcal{G}(\tilde{u})- \mathcal{G}(u)-\mathcal{G}'(u)U+\mathcal{H}(\tilde{u})-\mathcal{H}(u) - \mathcal{H}'(u)U \\
            &+ f(\tilde{u}) - f(u) -f'(u)U = 0.
        \end{aligned}
    \end{equation*}
    Denoting $\mathfrak{F}=f(\tilde{u})-f(u)-f'(u)(\tilde{u}-u)$, we can rewrite the above equation into
    \begin{equation}\label{theta_eq}
        \begin{aligned}
            \Psi_{tt} - \Delta\Psi + \mathcal{G}'(u)\Psi+\mathcal{H}'(u)\Psi + f'(u)\Psi+\mathfrak{G}+\mathfrak{H}+\mathfrak{F} =0.
        \end{aligned}
    \end{equation}
    Noticing that
    \begin{equation*}
        \mathfrak{F}(t) 
            = \int_0^1 [f'(\theta \tilde{u}+(1-\theta)u)-f'(u)] (\tilde{u}-u) d\theta = \int_0^1\int_0^1 f''(u+\tau\theta(\tilde{u}-u))\theta\psi^2d\tau d\theta,
    \end{equation*}
    it holds
    \begin{equation*}
        |(\mathfrak{F}(t),\Psi_t)| \leq C\|(1+u+\tilde{u})\|_6\|\psi^2\|_3\|\Psi_t\|\leq C\|\nabla \psi\|^2\|\Psi_t\|.
    \end{equation*}
    Multiplying \eqref{theta_eq} by $\Psi_t$ and integrating over $\Omega$, we obtain
    \begin{equation*}
        \begin{aligned}
            \frac{1}{2}\frac{d}{dt}(\|\Psi_t\|^2+\|\nabla \Psi\|^2) 
            &= -([\mathcal{G}'(u)+\mathcal{H}'(u) + f'(u)]\Psi,\Psi_t) - (\mathfrak{G}+\mathfrak{H}+\mathfrak{F},\Psi_t) \\
            &\leq C(\|\Psi_t\|^2 + \|\nabla \Psi\|^2) + (\|\mathfrak{G}\|+\|\mathfrak{H}\|)\|\Psi_t\| + C\|\nabla \psi\|^2\|\Psi_t\|.
        \end{aligned}
    \end{equation*}
    Let $(\xi,\zeta)$ be close enough to zero. Then, by \eqref{a_es_Lip}--\eqref{h_overline_estimate}, we have
    \begin{equation*}
        \begin{aligned}
            \frac{1}{2}\frac{d}{dt}(\|\Psi_t\|^2+\|\nabla \Psi\|^2) 
            &\leq C(\|\Psi_t\|^2 + \|\nabla \Psi\|^2) + (2\epsilon\|\tilde{\omega}-\omega\|_X+C\|\nabla \psi\|^2)\|\Psi_t\| \\
            &\leq C(\|\Psi_t\|^2 + \|\nabla \Psi\|^2) + 3\epsilon\|\tilde{\omega}-\omega\|_X\|\Psi_t\| \\
            &\leq (C+1)(\|\Psi_t\|^2 + \|\nabla \Psi\|^2) + \frac{9}{4}\epsilon^2\|\tilde{\omega}-\omega\|_X^2.
        \end{aligned}
    \end{equation*}
    Therefore, it follows from the Gronwall lemma, as well as \eqref{a_es_Lip}, that
    \begin{equation*}\label{theta_estimate}
        \|\Psi\|_X^2 \leq \frac{9}{4}\epsilon^2 e^{C t}\|(\xi,\zeta)\|_X^2,
    \end{equation*}
    since $\Psi(0) = 0$. In other words,
    \begin{equation*}\label{theta_limit}
        \frac{\|\tilde{\omega}(t)-\omega(t)-(U(t),U_t(t))\|_X^2}{\|(\xi,\zeta)\|_X^2} \leq \frac{9}{4}\epsilon^2 e^{Ct} \quad\textrm{for } (\xi,\zeta) \textrm{ close enough to }0 \textrm{ in } X.
    \end{equation*}
    Since $\epsilon$ could be arbitrarily small, we obtain the differentiability of $S(t)$. 
  \end{proof}
  
    \begin{lemma}\label{Lipschitz_u}
        Suppose that $f$ satisfies Assumption \ref{f_assum1} and \eqref{f_cond3}. Then there exists $r_0>0$ such that $S(t)\omega_0$ is uniformly (with respect to $t$) Lipschitz continuous on $B(0,r_0)\subset H^1_0(\Omega)\times L^2(\Omega)$.
    \end{lemma}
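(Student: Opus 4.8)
The plan is to reduce the uniform Lipschitz estimate to a uniform\nobreakdash-in\nobreakdash-time bound on the operator norm of the Fréchet derivative $L(t;\omega_0)=DS(t)\omega_0$ constructed in the previous lemma. Since $B(0,r_0)\subset \VV\times\HH$ is convex and, for $r_0$ small, every trajectory issuing from it stays in a fixed small ball on which the estimates below are valid (the continuation argument in Lemma~\ref{u_decay_equiv}), for $\omega_1,\omega_2\in B(0,r_0)$ the segment $\omega_s=(1-s)\omega_1+s\omega_2$ remains admissible and
\[
S(t)\omega_2-S(t)\omega_1=\int_0^1 L(t;\omega_s)(\omega_2-\omega_1)\,ds .
\]
Hence it suffices to prove $\|L(t;\omega_0)\|\le C$ for all $t\ge0$ and all $\omega_0\in B(0,r_0)$, with $C$ independent of $t,\omega_0$. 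Writing $(U,U_t)=L(t;\omega_0)(\xi,\zeta)$ for the solution of \eqref{diff_eq} along the small, decaying trajectory $u=S(t)\omega_0$, and $I_U=\|\nabla U\|^2+\|U_t\|^2$, I must therefore show $I_U(t)\le C\,I_U(0)$ uniformly in $t$.

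The tool will be a weighted energy estimate for \eqref{diff_eq}, in the spirit of Lemma~\ref{w_regularity_new}. Introduce the coercive energy
\[
\mathcal{E}=\tfrac12\|U_t\|^2+\tfrac12\|\nabla U\|^2+\tfrac{f'(0)}{2}\|U\|^2,
\]
which, since $f'(0)>-\lambda_1$, satisfies $c_1 I_U\le\mathcal{E}\le c_2 I_U$, and the modified functional $\Phi=\mathcal{E}+\epsilon I_u^{\frac p2}(U_t,U)$; for $r_0$ small one has $I_u\le r_0^2$, so the cross term is a small fraction of $\mathcal{E}$ and $\Phi\approx I_U$. Multiplying \eqref{diff_eq} by $U_t$ and discarding the nonnegative part $\|u_t\|^p\|U_t\|^2+p\|u_t\|^{p-2}(u_t,U_t)^2$ of the linearized damping gives
\[
\tfrac{d}{dt}\mathcal{E}\le -I_{u,p}\|U_t\|^2-\mathfrak B_1-\mathfrak B_2,\qquad
\mathfrak B_1=((f'(u)-f'(0))U,U_t),\quad \mathfrak B_2=p\|\nabla u\|^{p-2}(\nabla u,\nabla U)(u_t,U_t),
\]
while multiplying \eqref{diff_eq} by $\epsilon I_u^{\frac p2}U$ and adding manufactures the crucial \emph{coercive dissipation} $-\epsilon I_u^{\frac p2}\big(\|\nabla U\|^2+f'(0)\|U\|^2\big)$, a harmless kinetic source $\epsilon I_u^{\frac p2}\|U_t\|^2$ absorbed into the damping, and a weight-derivative term containing $\tfrac{d}{dt}I_u^{\frac p2}$, which by $|(\ln I_u)'|\le C$ (established around \eqref{r_cond_2}) is controlled by $\tfrac18 I_{u,p}\|U_t\|^2+C\epsilon^2 I_u^{\frac p2}\|\nabla U\|^2$.

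Everything then comes down to the $\|\nabla U\|^2$ balance. The damping $I_{u,p}\|U_t\|^2$ absorbs the kinetic parts of $\mathfrak B_1,\mathfrak B_2$ and of the weight derivative, but each leaves a multiple of $I_u^{\frac p2}\|\nabla U\|^2$. The dangerous one is $\mathfrak B_2$: it is \emph{not} dissipative, and its coefficient is homogeneous of degree $p$, hence comparable to $I_u^{p/2}\sim t^{-1}$ and \emph{not integrable}, so it cannot be handled by Gronwall against $\mathcal{E}$ and must be paid for by the coercive dissipation. When $f'(0)\le0$ (the only regime in which the threshold is active) Poincaré gives $\|\nabla U\|^2+f'(0)\|U\|^2\ge\big(1+\tfrac{f'(0)}{\lambda_1}\big)\|\nabla U\|^2$, so estimating $\mathfrak B_2$ by Young's inequality (the estimate \eqref{diff_estimate_utUt} referred to in the introduction) and optimizing $\epsilon$ makes the net coefficient of $I_u^{\frac p2}\|\nabla U\|^2$ nonpositive precisely when $1+\tfrac{f'(0)}{\lambda_1}>\tfrac p4$, i.e. under \eqref{f_cond3}. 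The term $\mathfrak B_1$ is subcritical: using $|f'(s)-f'(0)|=o(|s|^p)$ together with $\VV\hookrightarrow L^6$ one gets $\|f'(u)-f'(0)\|_{L^3}\le\tilde\eta\|\nabla u\|^p$ with $\tilde\eta$ as small as desired for $r_0$ small, so $|\mathfrak B_1|\le\tilde\eta I_u^{\frac p2}\mathcal{E}$ is absorbed. Consequently $\tfrac{d}{dt}\Phi\le0$ for all $t\ge0$, whence $I_U(t)\le C\,\Phi(t)\le C\,\Phi(0)\le C'\,I_U(0)$, the desired uniform bound on $\|L(t;\omega_0)\|$.

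The hard part is exactly the non-dissipative coupling $\mathfrak B_2=p\|\nabla u\|^{p-2}(\nabla u,\nabla U)(u_t,U_t)$ produced by differentiating the nonlocal coefficient $\|\nabla u\|^p$: it has no definite sign, its size $\sim I_u^{\frac p2}\|\nabla U\|\,\|U_t\|$ is exactly critical (its time integral diverges logarithmically, mirroring the slow $t^{-1/p}$ decay of Lemma~\ref{u_decay_equiv}), and there is no damping acting directly on $\|\nabla U\|$. The only way to tame it is to create $\|\nabla U\|^2$-dissipation through the weighted multiplier $\epsilon I_u^{\frac p2}U$ and balance it against the potential energy, which is where the coercivity threshold $f'(0)>-(1-\tfrac p4)\lambda_1$ — and the possibility, noted in the introduction, of lowering $\tfrac p4$ by a sharper evaluation of \eqref{diff_estimate_utUt} — enters.
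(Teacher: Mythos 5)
Your overall strategy is exactly the paper's: reduce the Lipschitz bound to a uniform-in-time bound on $\|L(t;\omega_0)\|$ via $S(t)\omega_2-S(t)\omega_1=\int_0^1L(t;\omega_s)(\omega_2-\omega_1)\,ds$, then close a weighted energy estimate for $\Phi=\frac12\big(\|U_t\|^2+\|\nabla U\|^2+f'(0)\|U\|^2\big)+\epsilon I_u^{\frac p2}(U_t,U)$ (the paper's $\Lambda_U$) using the multipliers $U_t$ and $\epsilon I_u^{\frac p2}U$, Poincar\'e's inequality for the threshold, and $|f'(s)-f'(0)|=o(|s|^p)$ for $\mathfrak{B}_1$. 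However, there is one genuine error, and it sits exactly at the step you yourself flag as the hard part: you \emph{discard} the quadratic dissipation $p\|u_t\|^{p-2}(u_t,U_t)^2$ produced by linearizing $\|u_t\|^pu_t$, and then invoke \eqref{diff_estimate_utUt} to estimate $\mathfrak{B}_2$. These two moves are incompatible. The estimate \eqref{diff_estimate_utUt} bounds $|\mathfrak{B}_2|$ by $\frac p4I_u^{\frac p2}\|\nabla U\|^2+p\|u_t\|^{p-2}|(u_t,U_t)|^2$, and its whole point is that the second term is cancelled \emph{exactly} by the retained dissipation $p\|u_t\|^{p-2}|(u_t,U_t)|^2$ on the left of \eqref{diff_multi_U_t}; only then does $\mathfrak{B}_2$ cost merely $\frac p4I_u^{\frac p2}\|\nabla U\|^2$, which is what makes the threshold $f'(0)>-\big(1-\frac p4\big)\lambda_1$ attainable.

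Once that term is thrown away, the only way to handle the factor $(u_t,U_t)$ is Cauchy--Schwarz, $|\mathfrak{B}_2|\le p\|\nabla u\|^{p-1}\|u_t\|\,\|\nabla U\|\,\|U_t\|$, so the cross term must be paid for out of the kinetic damping $I_{u,p}\|U_t\|^2$, and this loses a factor that cannot be recovered near the threshold. Concretely, test the configuration $\|\nabla u\|=\|u_t\|$, where $I_{u,p}=2^{1-p/2}I_u^{p/2}$ and $\|\nabla u\|^{p-1}\|u_t\|=2^{-p/2}I_u^{p/2}$: splitting $|\mathfrak{B}_2|\le\delta I_u^{\frac p2}\|U_t\|^2+\frac{p^2 2^{-p}}{4\delta}I_u^{\frac p2}\|\nabla U\|^2$, positivity of the kinetic coefficient forces $\delta+\epsilon<2^{1-p/2}$, hence $\delta\epsilon<2^{-p}$, while positivity of the gradient coefficient forces $4\delta\epsilon\big(1+\tfrac{f'(0)}{\lambda_1}\big)>p^2 2^{-p}$, i.e. $1+\tfrac{f'(0)}{\lambda_1}>\tfrac{p^2}4$. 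Since $p>1$, this is strictly stronger than the hypothesis $1+\tfrac{f'(0)}{\lambda_1}>\tfrac p4$ of \eqref{f_cond3}: for $f'(0)$ between $-\big(1-\frac p4\big)\lambda_1$ and $-\big(1-\frac{p^2}4\big)\lambda_1$, the differential inequality you display after ``discarding'' cannot be closed, no matter how $\epsilon$ and $\delta$ are optimized. (This is also why your displayed inequality is internally inconsistent: it keeps $I_{u,p}\|U_t\|^2$ while listing $\|u_t\|^p\|U_t\|^2$ among the discarded terms.) The fix is simply to keep $p\|u_t\|^{p-2}(u_t,U_t)^2$ on the dissipative side, exactly as the paper does in \eqref{diff_multi_U_t}--\eqref{diff_multi_E2p_2}; with that single correction your argument coincides with the paper's proof, including the final passage from the derivative bound to the Lipschitz estimate.
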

    \begin{proof}
      Let $\|(\xi,\zeta)\|_X\leq 1$ and denote $f_*(s)=f(s)-f'(0)s$. We multiply equation \eqref{diff_eq} by $U_t$ and integrate over $\Omega$ to get
      \begin{align}\label{diff_multi_U_t}
        \begin{split}
           &\quad\frac{1}{2}\frac{d}{dt}[\|U_t\|^2
            + \|\nabla U\|^2+f'(0)\|U\|^2]
            + (\|\nabla u\|^p + \|u_t\|^p)\|U_t\|^2\\&
            + p\|\nabla u\|^{p-2}(\nabla u,\nabla U)(u_t,U_t)
            + p\|u_t\|^{p-2}|(u_t,U_t)|^2
            + (f_*'(u)U,U_t)
            = 0.
        \end{split}
      \end{align}
    And similarly, by the test function $U$, it holds
    \begin{align}\label{diff_multi_U}
      \begin{split}
        &\quad\frac{d}{dt}(U_t,U)
            - \|U_t\|^2
            + \|\nabla U\|^2+f'(0)\|U\|^2
            + (\|\nabla u\|^p + \|u_t\|^p)(U_t,U)\\&
            + p\|\nabla u\|^{p-2}(\nabla u,\nabla U)(u_t,U)
            + p\|u_t\|^{p-2}(u_t,U_t)(u_t,U)
            + (f_*'(u)U,U)
            = 0.
      \end{split}
    \end{align}
    Now, taking $\epsilon>0$, we multiply \eqref{diff_multi_U} with $\epsilon I_u^{\frac{p}{2}}(t)$ and sum it with \eqref{diff_multi_U_t} to obtain
    \begin{align}\label{diff_multi_E2p_1}
      \begin{split}
      &\quad\frac{1}{2}\frac{d}{dt}(\|U_t\|^2 + \|\nabla U\|^2+f'(0)\|U\|^2)
            + \epsilon I_u^{\frac{p}{2}} \frac{d}{dt}(U_t,U)+ p\|u_t\|^{p-2}|(u_t,U_t)|^2
            \\&+ I_{u,p}\|U_t\|^2
            + \epsilon I_u^{\frac{p}{2}}(\|\nabla U\|^2+f'(0)\|U\|^2)
            = -p\|\nabla u\|^{p-2}(\nabla u,\nabla U)(u_t,U_t)\\&
            - (f_*'(u)U,U_t)
            + \epsilon I_u^{\frac{p}{2}}\|U_t\|^2
            - \epsilon I_u^{\frac{p}{2}}I_{u,p}(U_t,U)
            - \epsilon pI_u^{\frac{p}{2}}\|\nabla u\|^{p-2}(\nabla u,\nabla U)(u_t,U)\\&
            - \epsilon pI_u^{\frac{p}{2}}\|u_t\|^{p-2}(u_t,U_t)(u_t,U)
            - \epsilon I_u^{\frac{p}{2}}(f_*'(u)U,U),
      \end{split}
    \end{align}
    where $I_u, I_{u,p}$ have been given in \eqref{I_u_definition}. Note that
    \begin{equation*}
        \begin{aligned}
            \epsilon I_u^{\frac{p}{2}}\frac{d}{dt}(U_t,U) 
            &= \epsilon \frac{d}{dt}[I_u^{\frac{p}{2}}(U_t,U)] - \frac{\epsilon p}{2}(U_t,U)I_u^{\frac{p}{2}-1}\frac{d}{dt}(\|\nabla u\|^2 + \|u_t\|^2) \\
            &= \epsilon \frac{d}{dt}[I_u^{\frac{p}{2}}(U_t,U)] - \frac{\epsilon p}{2}(U_t,U)I_u^{\frac{p}{2}-1}\left[-I_{u,p}\|u_t\|^2 - (f(u),u_t)\right] \\
            &\geq \epsilon \frac{d}{dt}[I_u^{\frac{p}{2}}(U_t,U)] - \epsilon C(\|U_t\|^2+\|\nabla U\|^2)I_u^{\frac{p}{2}-1}\left[I_{u,p}\|u_t\|^2 + |(f(u),u_t)|\right],
        \end{aligned}
      \end{equation*}
      and
    \begin{align}\label{diff_estimate_utUt}%\label{diff_estimate_begin}
          \begin{split}
            &\quad\big| p\|\nabla u\|^{p-2}(\nabla u,\nabla U)(u_t,U_t) \big| 
            \leq p\|\nabla u\|^{p-1}\|\nabla U\|\cdot |(u_t,U_t)| \\
            &\leq \frac{p}{4}\|\nabla u\|^{2p-2}\|\nabla U\|^2\|u_t\|^{2-p} + p\|u_t\|^{p-2}|(u_t,U_t)|^2 \\
            &\leq \frac{p}{4}(\|\nabla u\|^2+\|u_t\|^2)^{p/2}\|\nabla U\|^2 + p\|u_t\|^{p-2}|(u_t,U_t)|^2.
          \end{split}
    \end{align}
We derive from \eqref{diff_multi_E2p_1} that
    \begin{align}\label{diff_multi_E2p_2}
      \begin{split}
        &\frac{1}{2}\frac{d}{dt}(\|U_t\|^2 + \|\nabla U\|^2+f'(0)\|U\|^2)
            + \epsilon\frac{d}{dt}\Big(I_u^{\frac{p}{2}}(U_t,U)\Big)+ (1-\epsilon)I_u^{\frac{p}{2}}\|U_t\|^2\\&
            + \big(\epsilon-\frac{p}{4}\big)I_u^{\frac{p}{2}}\|\nabla U\|^2+\epsilon I_u^{\frac{p}{2}}f'(0)\|U\|^2 \leq - (f_*'(u)U,U_t)
            - \epsilon I_u^{\frac{p}{2}}I_{u,p}(U_t,U)\\&
            - \epsilon pI_u^{\frac{p}{2}}\|\nabla u\|^{p-2}(\nabla u,\nabla U)(u_t,U)- \epsilon pI_u^{\frac{p}{2}}\|u_t\|^{p-2}(u_t,U_t)(u_t,U)
            - \epsilon I_u^{\frac{p}{2}}(f_*'(u)U,U)\\&\quad
            + \epsilon C(\|U_t\|^2+\|\nabla U\|^2)I_u^{\frac{p}{2}-1}\left[I_{u,p}\|u_t\|^2 + |(f(u),u_t)|\right],
      \end{split}
    \end{align}
where we have also used the fact $I_u^{\frac{p}{2}}\leq I_{u, p}$. By the condition $f'(0)> -(1-\frac{p}{4})\lambda_1$, we can choose $\mu\in(0, 1-\frac{p}{4})$ such that $f'(0)>-\mu\lambda_1> -(1-\frac{p}{4})\lambda_1$ and deduce from Poincar\'{e}'s inequality
\begin{align*}
  \big(\epsilon-\frac{p}{4}\big)I_u^{\frac{p}{2}}\|\nabla U\|^2+\epsilon I_u^{\frac{p}{2}}f'(0)\|U\|^2>\big(\epsilon-\frac{p}{4}-\mu\epsilon\big)I_u^{\frac{p}{2}}\|\nabla U\|^2.
\end{align*}
This allows us to find $\epsilon\in(0, 1)$ and $c>0$ such that
\begin{align*}
  (1-\epsilon)I_u^{\frac{p}{2}}\|U_t\|^2+\big(\epsilon-\frac{p}{4}\big)I_u^{\frac{p}{2}}\|\nabla U\|^2+\epsilon I_u^{\frac{p}{2}}f'(0)\|U\|^2\geq 3cI_u^{\frac{p}{2}}(\|U_t\|^2+\|\nabla U\|^2).
\end{align*}
    Therefore, \eqref{diff_multi_E2p_2} gives
    \begin{align}\label{diff_multi_E2p_3}
      \begin{split}
            &\frac{1}{2}\frac{d}{dt}(\|U_t\|^2 + \|\nabla U\|^2+f'(0)\|U\|^2)
            + \epsilon\frac{d}{dt}\left(I_u^{\frac{p}{2}}(U_t,U)\right)
            + 3cI_u^{\frac{p}{2}}(\|U_t\|^2 + \|\nabla U\|^2)
            \\&\leq - (f_*'(u)U,U_t)
            - \epsilon I_u^{\frac{p}{2}}I_{u,p}(U_t,U)- \epsilon I_u^{\frac{p}{2}}\|\nabla u\|^{p-2}(\nabla u,\nabla U)(u_t,U)\\&\qquad
            - \epsilon p I_u^{\frac{p}{2}}\|u_t\|^{p-2}(u_t,U_t)(u_t,U)- \epsilon I_u^{\frac{p}{2}}(f_*'(u)U,U)\\&\qquad\qquad
            + \epsilon C(\|U_t\|^2+\|\nabla U\|^2)\left[I_{u,p}\|u_t\|^2 + |(f(u),u_t)|\right].
        \end{split}
    \end{align}

    It remains to estimate the right hand side of the above inequality. From \eqref{f_cond3} we know for any $\eta>0$ there exists $\delta>0$ such that
    \begin{equation*}
        |f_*'(s)|\leq
        \begin{cases}
            \eta|s|^p , &|s|<\delta, \\
            C(|s|+|s|^2)\leq C_{\delta}|s|^2, &|s|\geq \delta.
        \end{cases}
    \end{equation*}
Note that $\|\nabla u(t)\|$ and $\|u_t(t)\|$ keep small for all $t\geq 0$ if we take $r_0$ small enough by the continuity argument as in Lemma \ref{u_decay_equiv}. In this way, we can find $C_1>0$ such that if $r_0<C_1$, we have
    \begin{equation*}
        \begin{aligned}
            \|f_*'(u)\|_3 
            &\leq \Big(\int_{\{|u|<\delta\}}|f_*'(u)|^3dx + \int_{\{|u|\geq \delta\}} |f_*'(u)|^3 dx\Big)^{\frac{1}{3}} 
            \leq \eta\|u\|_{L^{3p}(\Omega)}^p + C\|u\|_{L^6(\Omega)}^2 \\&\leq C\eta\|\nabla u\|^p + C\|\nabla u\|^2 \leq c\|\nabla u\|^p,
        \end{aligned}
    \end{equation*}
if we choose $\eta$ small enough in advance, and thereby
    \begin{equation*}
        % \begin{aligned}
        %     |(f_*'(u)U,U_t)| &\leq C(\|\nabla u\|^2 + \|\nabla u\|^{p+\sigma})\|\nabla U\|\|U_t\| 
        %     \leq C\|\nabla u\|^{p+\sigma}(\|\nabla U\|^2 + \|U_t\|^2) \\
        %     &\leq cI_u^{\frac{p}{2}} (\|\nabla U\|^2 + \|U_t\|^2).
        % \end{aligned}
        |(f_*'(u)U,U_t)| \leq \|f_*'(u)\|_3 \|U_t\| \|U\|_6 \leq cI_u^{\frac{p}{2}} (\|\nabla U\|^2 + \|U_t\|^2).
    \end{equation*}
    For the rest terms, one can estimate them more easily since they all contain higher-power factors $I_u^{\frac{q}{2}}$ with $q>p$. In detail, there exists $C_2>0$ such that if $r_0<C_2$, the rest of the right hand side of inequality \eqref{diff_multi_E2p_3} will be smaller than $cI_u^{\frac{p}{2}} (\|\nabla U\|^2 + \|U_t\|^2)$. Hence, we end up with
    \begin{equation}\label{E_Up_ineq}
        \frac{d}{dt}\left( \frac{1}{2}(\|U_t\|^2 + \|\nabla U\|^2+f'(0)\|U\|^2) + \epsilon I_u^{\frac{p}{2}}(U_t,U) \right) + cI_u^{\frac{p}{2}}(\|U_t\|^2 + \|\nabla U\|^2) \leq 0.
    \end{equation}
Note that $\frac{1}{2}(\|U_t\|^2 + \|\nabla U\|^2+f'(0)\|U\|^2) + \epsilon I_u^{\frac{p}{2}}(U_t,U)$ is equivalent to $\|U_t\|^2 + \|\nabla U\|^2$ if $I_u^{\frac{p}{2}}$ is small enough. Therefore, following the continuity argument as in Lemma \ref{u_decay_equiv} again, we can find a constant $C_3>0$ such that if $r_0<C_3$, then
    \begin{equation*}
        \begin{aligned}
            \|U_t(t)\|^2 + \|\nabla U(t)\|^2 
            &\leq C\Big[\frac{1}{2}(\|U_t\|^2 + \|\nabla U\|^2+f'(0)\|U\|^2) + \epsilon I_u^{\frac{p}{2}}(U_t,U)\Big]\Big|_{t} \\
            &\leq C\Big[\frac{1}{2}(\|U_t\|^2 + \|\nabla U\|^2+f'(0)\|U\|^2) + \epsilon I_u^{\frac{p}{2}}(U_t, U)\Big]\Big|_{t=0}\\
            & \leq C.
        \end{aligned}
    \end{equation*}
    This means that $L(t; \cdot)$ is uniformly bounded, i.e., $\|L(t; \cdot)\|_{X\to X}\leq C$ for all $t\geq 0$. 

    As the conclusion, for every $\omega_1,\omega_2\in B(0,r_0)$ and $t\geq 0$, we obtain
    \begin{equation*}
            \|S(t)\omega_1 - S(t)\omega_2\|_X 
            \leq \Big\|\int_0^1 L(t; \theta\omega_1 + (1-\theta)\omega_2) (\omega_1-\omega_2) d\theta \Big\|_X \leq C\|\omega_1-\omega_2\|_X.
    \end{equation*}
    This completes the proof. 
  \end{proof}

  \begin{remark}\label{remark_reason}
    Inequality \eqref{E_Up_ineq} implies that
    \begin{equation*}
      \frac{d}{dt}\Lambda_U(t) +c I_u^{\frac{p}{2}}(t)\Lambda_U(t) \leq 0,
    \end{equation*}
    where $\Lambda_U=\frac{1}{2}(\|U_t\|^2 + \|\nabla U\|^2+f'(0)\|U\|^2) + \epsilon I_u^{\frac{p}{2}}(U_t,U)$. By the Gronwall lemma, we get 
        \begin{equation*}
            \Lambda_U(t) \leq \Lambda_U(0)\exp\Big(\intt -cI_u^{\frac{p}{2}}(s) ds\Big).
        \end{equation*}
Then it follows from Lemma {\rm \ref{u_decay_equiv}} that
        \begin{equation*}
            \begin{aligned}
                \Lambda_U(t)&\leq \Lambda_U(0) \exp\left(C_1^p\ln(k_1(I_u(0))^{-p/2})-C_1^p\ln(t+k_1(I_u(0))^{-p/2})\right)  \\
                &= \Lambda_U(0) \left(\frac{k_1(I_u(0))^{-p/2}}{t+k_1(I_u(0))^{-p/2}}\right)^{C_1^p},
            \end{aligned}
        \end{equation*}
        where $C_1, k_1$ are the constants given in Lemma {\rm \ref{u_decay_equiv}}. Note that $\Lambda_U^{1/2}$ can be regarded as an equivalent quantity of $\|(U, U_t)\|_{H^1_0(\Omega)\times L^2(\Omega)}$ when $I_u$ is small enough. This tells us that, for each $(u_0,u_1)\in B(0,r_0)$, the derivative $L(t;u_0,u_1)$ would have norm less than 1, in fact tending to zero, as long as $t$ is large enough. However, this is not the case if we consider the supremum of the operator norm of $L(t;u_0,u_1)$ over $(u_0,u_1)\in B(0,r_0)$, since for fixed $T>0$
        \begin{equation*}
            \frac{k_1(I_u(0))^{-p/2}}{T+k_1(I_u(0))^{-p/2}} \to 1 \quad \mathrm{as}~I_u(0)\to 0.
        \end{equation*}
        In fact, if $(u_0,u_1)=(0,0)$, the linearized equation \eqref{diff_eq} turns exactly into the wave equation
        \begin{align*}
          U_{tt}-\Delta U+f'(0)U=0,
        \end{align*}
which conserves the quantity $\Lambda_U$. Hence, the linearized operator has no any uniform contraction property and the known methods to estimate the fractal dimension do not work near the origin.
    \end{remark}

    To show finite dimensionality of the non-degenerate part of the attractor, we introduce the decomposition $U=V+W$, where
    \begin{equation}\label{V_eq}
        \begin{cases}
            V_{tt} - \Delta V + \effip V_t = 0, \\
            (V(0),V_t(0)) = (\xi,\zeta),
        \end{cases}
    \end{equation}
    and
    \begin{equation}\label{W_eq}
        \begin{cases}
            W_{tt} - \Delta W + \effip W_t + p[\|\nabla u\|^{p-2}(\nabla u,\nabla U) \\
            \qquad\qquad\qquad\qquad\qquad\qquad\qquad\,\, +\|u_t\|^{p-2}(u_t,U_t)]u_t + f'(u)U=0, \\
            W(0) = W_t(0) = 0.
          \end{cases}
    \end{equation}

  \begin{lemma}\label{V_decay}
       Suppose Assumptions \ref{f_assum1} and \ref{assum_2} hold. Then there exist $\epsilon_0>0, T_0>0$ and $0<q<1$ such that if $(u, u_t)\in \mathscr{A}$ with $\|\nabla u(0)\|^2+\|u_t(0)\|^2 \geq \epsilon_0^2$, then the solution of \eqref{V_eq} satisfies
    \begin{equation*}
        I_V(T)\leq qI_V(0), \quad\forall T\geq T_0,
    \end{equation*}
      where recall $I_V=\|\nabla V\|^2+\|V_t\|^2$. 
  \end{lemma}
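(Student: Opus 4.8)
The plan is to treat \eqref{V_eq} as a linear wave equation $V_{tt}-\Delta V+a(t)V_t=0$ with the scalar, time-dependent damping coefficient $a(t)=I_{u,p}(t)=\|\nabla u(t)\|^p+\|u_t(t)\|^p$, and to run a perturbed-energy argument over a single interval $[0,T_0]$. First I would record the energy identity: multiplying \eqref{V_eq} by $V_t$ gives $\frac{d}{dt}I_V=-2a(t)\|V_t\|^2\le 0$, so $I_V$ is non-increasing. Consequently it suffices to produce $T_0$ and $q\in(0,1)$ with $I_V(T_0)\le qI_V(0)$, since then $I_V(T)\le I_V(T_0)\le qI_V(0)$ for every $T\ge T_0$.

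Next I would pin down two-sided bounds on $a(t)$ on $[0,T_0]$. The upper bound is immediate: since $(u,u_t)\in\mathscr{A}$ and $\mathscr{A}$ is bounded in $\VV\times\HH$ (Theorems \ref{dissipativity} and \ref{attractor_existence}), and since $I_{u,p}\le 2^{1-p/2}I_u^{p/2}$, we get $a(t)\le a_1$ for a constant $a_1$ determined by the attractor. The lower bound is the crucial input, and it is where the hypothesis $I_u(0)\ge\epsilon_0^2$ enters: applying the strengthened decay estimate of Remark \ref{regul-decay_remark_decay} (based on Lemma \ref{u_decay_equiv}) with $r$ the radius of $\mathscr{A}$, together with $I_u(0)^{-p/2}\le\epsilon_0^{-p}$, yields $I_u(t)\ge C_1^2\big(t+k_1\epsilon_0^{-p}\big)^{-2/p}$ for all $t\ge 0$; combined with $I_u^{p/2}\le I_{u,p}$ this gives, on $[0,T_0]$, the positive lower bound $a(t)\ge a_0:=C_1^{p}\big(T_0+k_1\epsilon_0^{-p}\big)^{-1}$.

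With $a_0\le a(t)\le a_1$ in hand I would introduce the perturbed energy $\Phi(t)=I_V(t)+\delta(V_t,V)$ for a small parameter $\delta>0$. Using $\frac{d}{dt}(V_t,V)=\|V_t\|^2-\|\nabla V\|^2-a(V_t,V)$, the Poincar\'e inequality $\|V\|\le\lambda_1^{-1/2}\|\nabla V\|$, and Young's inequality to absorb the cross term, one obtains
\[
  \frac{d}{dt}\Phi\le\Big(-2a+\delta+\frac{\delta a^2}{2\lambda_1}\Big)\|V_t\|^2-\frac{\delta}{2}\|\nabla V\|^2 .
\]
Choosing $\delta=a_0\big(1+\tfrac{a_1^2}{2\lambda_1}\big)^{-1}$ forces the $\|V_t\|^2$-coefficient below $-a_0$, so $\frac{d}{dt}\Phi\le-\tfrac{\delta}{2}I_V$; since $|(V_t,V)|\le\tfrac{1}{2\sqrt{\lambda_1}}I_V$, $\Phi$ is equivalent to $I_V$ with constants $1\pm\tfrac{\delta}{2\sqrt{\lambda_1}}$, and integrating the resulting differential inequality over $[0,T_0]$ gives
\[
  I_V(T_0)\le\frac{1+\delta/(2\sqrt{\lambda_1})}{1-\delta/(2\sqrt{\lambda_1})}\,
  \exp\!\Big(-\tfrac{\delta T_0/2}{1+\delta/(2\sqrt{\lambda_1})}\Big)\,I_V(0)=:q(T_0)\,I_V(0).
\]

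The main obstacle is that $a_0$, hence $\delta$, degrades as $T_0$ grows, so one cannot simply send $T_0\to\infty$ to make the exponential small; with the naive exponential bound $I_u(t)\ge\epsilon_0^2e^{-Ct}$ one would get $\delta T_0\to 0$ and $q(T_0)\to 1$, which is useless. The point of using the polynomial lower bound of Remark \ref{regul-decay_remark_decay} is exactly that $a_0\sim T_0^{-1}$, so that $\delta T_0$ converges to the positive constant $\tfrac{C_1^{p}}{1+a_1^2/(2\lambda_1)}$ as $T_0\to\infty$, while $\delta\to 0$ drives the equivalence prefactor to $1$. Hence $q(T_0)\to e^{-L}<1$ with $L=\tfrac{C_1^{p}}{2(1+a_1^2/(2\lambda_1))}>0$, and I would conclude by fixing any $T_0$ large enough that $q:=q(T_0)<1$; here $\epsilon_0$ may be taken to be any fixed positive number, since the limit $L$ is independent of it.
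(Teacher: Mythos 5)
Your proof is correct and follows essentially the same route as the paper's: a perturbed energy $I_V+\delta(V_t,V)$ combined with the polynomial lower bound on the damping coefficient $\|\nabla u\|^p+\|u_t\|^p$ coming from Remark \ref{regul-decay_remark_decay}, with the perturbation parameter scaled like $1/T_0$ so that the net contraction factor stays bounded away from $1$. The only (harmless) differences are bookkeeping: you use the monotonicity of $I_V$ to reduce the claim to the single time $T_0$ and then fix $\epsilon_0$ while letting $T_0\to\infty$, whereas the paper sets $T_0=k_1\epsilon_0^{-p}$, picks $\epsilon$ anew for each $T\geq T_0$, and shrinks $\epsilon_0$ to force the equivalence prefactor close to $1$.
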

    \begin{proof}
Let $(u, u_t)\in \mathscr{A}$ and $I_u(0)\geq \epsilon_0^2$ with $\epsilon_0$ to be determined. We multiply \eqref{V_eq} by $V_t + \epsilon V$ with $\epsilon>0$ and integrate over $\Omega$ to obtain
    \begin{equation*}
        \begin{aligned}
            \frac{d}{dt}I_{V, \epsilon}(t) + (\|\nabla u(t)\|^p 
            &+ \|u_t(t)\|^p - \epsilon)\|V_t(t)\|^2 + \epsilon \|\nabla V(t)\|^2 \\
            &+ \epsilon\effitp(V(t),V_t(t)) = 0,
        \end{aligned}
    \end{equation*}
    where $I_{V, {\epsilon}}(t) = \frac{1}{2}\|V_t(t)\|^2 + \frac{1}{2}\|\nabla V(t)\|^2 + \epsilon(V(t),V_t(t))$. Note that
    \begin{equation*}
            \left|\epsilon\effip(V,V_t)\right| \leq \epsilon C(V, V_t)\leq \frac{\epsilon}{2}\|\nabla V\|^2 + \epsilon C\|V_t\|^2.
    \end{equation*}
In terms of Remark \ref{regul-decay_remark_decay} and \eqref{E_equiv} we have that
    \begin{equation*}
        \frac{d}{dt}I_{V, {\epsilon}}(t) + \Big(\frac{C_1}{t+k_1\epsilon_0^{-p}} -C_0\epsilon\Big)\|V_t(t)\|^2 + \frac{\epsilon}{2} \|\nabla V(t)\|^2 \leq 0.
    \end{equation*}
    Set $T_0=k_1\epsilon_0^{-p}$. Then, for $T\geq T_0$, it holds for $t\in [0, T]$
    \begin{align*}
      \frac{d}{dt}I_{V, {\epsilon}}(t) + \Big(\frac{C_1}{T+k_1\epsilon_0^{-p}} -C_0\epsilon\Big)\|V_t(t)\|^2 + \frac{\epsilon}{2} \|\nabla V(t)\|^2 \leq 0,\\
      \frac{d}{dt}I_{V, {\epsilon}}(t) + \frac{C_1}{4C_0(T+k_1\epsilon_0^{-p})}(\|V_t(t)\|^2 + \|\nabla V(t)\|^2) \leq 0,
      \end{align*}
by setting $\epsilon =\frac{C_1}{2C_0(T+k_1\epsilon_0^{-p})}$. Since
    \begin{equation*}%\label{E_V_equiv}
        \frac{1-\epsilon/\sqrt{\lambda_1}}{2}I_V(t) \leq I_{V, {\epsilon}}(t) \leq \frac{1+\epsilon/\sqrt{\lambda_1}}{2}I_V(t)\quad\textrm{for } \epsilon<\sqrt{\lambda_1},
    \end{equation*}
we have for $t\in [0, T]$
    \begin{equation*}
        \frac{d}{dt}I_{V, {\epsilon}}(t) + \frac{C}{(1+\epsilon/\sqrt{\lambda_1})T} I_{V, {\epsilon}}(t) \leq 0.
    \end{equation*}
    Hence, it follows that
    \begin{equation*}
        \begin{aligned}
            I_V(T)&\leq \frac{2}{1-\epsilon/\sqrt{\lambda_1}}I_{V, {\epsilon}}(T) 
            \leq \frac{2}{1-\epsilon/\sqrt{\lambda_1}}I_{V, {\epsilon}}(0) e^{-\frac{C}{(1+\epsilon/\sqrt{\lambda_1})T}\times T} \\
            &\leq \frac{1+\epsilon/\sqrt{\lambda_1}}{1-\epsilon/\sqrt{\lambda_1}} e^{-\frac{C}{2}}I_V(0).
        \end{aligned}
    \end{equation*}
    Observe that $$\frac{1+\epsilon/\sqrt{\lambda_1}}{1-\epsilon/\sqrt{\lambda_1}}e^{-\frac{C}{2}}\leq \frac{4\sqrt{\lambda_1}C_0k_1\epsilon_0^{-p}+C_1}{4\sqrt{\lambda_1}C_0k_1\epsilon_0^{-p}-C_1}e^{-\frac{C}{2}}\stackrel{\Delta}{=}q $$ and $\lim_{\varepsilon_0\rightarrow 0^+} q=e^{-\frac{C}{2}}<1$. Therefore, we can take $\epsilon_0$ small enough, such that, $$\epsilon_0<r_0, \epsilon<\sqrt{\lambda_1} \textrm{ and } q<1,$$in which situation the desired result holds. 
  \end{proof}

    \begin{lemma}\label{W_regularity}
        Suppose Assumptions \ref{f_assum1} and \ref{assum_2} hold. Let $(u,u_t)\in\mathscr{A}$ and $\|(\xi, \zeta)\|_{H^1_0\times L^2} \leq 1$. Then, for each $T>0$, $(\nabla W(T),W_t(T))\in H^{\beta+1}(\Omega)\times H^{\beta}(\Omega)$ for some $\beta>0$.
    \end{lemma}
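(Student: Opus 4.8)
The plan is to mirror the proof of Lemma \ref{w_regularity_new}, transporting the regularizing energy estimate to the linearized equation \eqref{W_eq}, but exploiting the fact that here only a bound for each \emph{fixed} $T>0$ is needed rather than a global-in-time one. First I would record the data available. Since $(u,u_t)\in\mathscr{A}$ and $\mathscr{A}$ is invariant, the whole trajectory stays in $\mathscr{A}$, so by Theorem \ref{attractor_reg} it remains bounded in $H^{1+\beta}(\Omega)\times H^{\beta}(\Omega)$ with $\beta=\frac27$; in particular $u\in L^{14}(\Omega)$, $\nabla u\in L^2(\Omega)$ and $u_t\in H^{\beta}(\Omega)$, uniformly in $t$. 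Moreover, an energy estimate on \eqref{diff_eq} analogous to \eqref{a_es_Lip} combined with Gronwall's lemma shows $(U,U_t)$ stays bounded in $\VV\times\HH$ on $[0,T]$, with a bound $C(T)$ depending on $T$ and on $\|(\xi,\zeta)\|_X\le1$.

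Next I would set $\tilde W=A^{\frac{\beta}{2}}W$, which satisfies
\begin{equation*}
  \tilde W_{tt}-\Delta\tilde W+I_{u,p}\tilde W_t+A^{\frac{\beta}{2}}h=0,\qquad \tilde W(0)=\tilde W_t(0)=0,
\end{equation*}
where $h=p\big[\|\nabla u\|^{p-2}(\nabla u,\nabla U)+\|u_t\|^{p-2}(u_t,U_t)\big]u_t+f'(u)U$ is the source. The crux is to prove $\|h\|_{H^{\beta}}\le C(T)$ on $[0,T]$. The first summand is a scalar $c(t)=p\big[\|\nabla u\|^{p-2}(\nabla u,\nabla U)+\|u_t\|^{p-2}(u_t,U_t)\big]$ times $u_t$; since $|c(t)|\le p(\|\nabla u\|^{p-1}\|\nabla U\|+\|u_t\|^{p-1}\|U_t\|)$ is bounded (the apparent singularity of $\|u_t\|^{p-2}$ being cancelled by the factor $(u_t,U_t)$), and $u_t\in H^{\beta}$, we get $\|c(t)u_t\|_{H^{\beta}}=|c(t)|\,\|u_t\|_{H^{\beta}}\le C(T)$.

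For the genuinely nonlinear summand $f'(u)U$ I would estimate it in $W^{1,q}(\Omega)$ with $\frac1q=\frac1{14}+\frac16+\frac12$ and then invoke $W^{1,q}(\Omega)\hookrightarrow H^{\beta}(\Omega)$ with $\beta=\frac27$, exactly the embedding used in \eqref{fu_Hs_estimate}. Writing $\nabla(f'(u)U)=f''(u)\nabla u\,U+f'(u)\nabla U$ and using the growth bounds $|f'(s)|\le C(1+|s|^2)$, $|f''(s)|\le C(1+|s|)$ together with $u\in L^{14}$, $\nabla u\in L^2$, $U\in L^6$ and $\nabla U\in L^2$, Hölder's inequality gives $\|f'(u)U\|_{W^{1,q}}\le C(1+\|u\|_{14})^2\|\nabla u\|\,\|U\|_{H^1}\le C(T)$. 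I expect this to be the main obstacle: only $H^1$-control of $U$ is available (the linearized energy estimate provides no extra derivative on $U$), so the $\beta$-derivative must be produced entirely by distributing the single gradient in the product rule, and the exponent count closes precisely because the critical growth of $f$ matches the attractor regularity $u\in L^{14}$.

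Finally, with $\|h\|_{H^{\beta}}\le C(T)$ in hand, I would multiply the $\tilde W$-equation by $\tilde W_t$, discard the nonnegative damping $I_{u,p}\|\tilde W_t\|^2\ge0$, and bound $(A^{\frac{\beta}{2}}h,\tilde W_t)\le C\|h\|_{H^{\beta}}\|\tilde W_t\|$ to obtain
\begin{equation*}
  \frac{d}{dt}\big(\|\tilde W_t\|^2+\|\nabla\tilde W\|^2\big)\le C(T)+\big(\|\tilde W_t\|^2+\|\nabla\tilde W\|^2\big).
\end{equation*}
Since $\tilde W(0)=\tilde W_t(0)=0$, Gronwall's lemma on $[0,T]$ yields $\|\tilde W_t(T)\|^2+\|\nabla\tilde W(T)\|^2\le C(T)$, i.e. $(W(T),W_t(T))\in H^{1+\beta}(\Omega)\times H^{\beta}(\Omega)$ with $\beta=\frac27$. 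Note that, unlike Lemma \ref{w_regularity_new}, no weighted estimate or appeal to Lemma \ref{F_estimate_lemma} is required, precisely because a $T$-dependent bound suffices and the degeneracy of the damping coefficient $I_{u,p}$ is harmless once it is dropped.
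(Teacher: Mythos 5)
Your proposal is correct and follows essentially the same route as the paper: bound the two source terms $f'(u)U$ and $[\|\nabla u\|^{p-2}(\nabla u,\nabla U)+\|u_t\|^{p-2}(u_t,U_t)]u_t$ in $H^{\beta}(\Omega)$ using the attractor regularity of Theorem \ref{attractor_reg} (the paper uses $q=14/9$, exploiting $\nabla u\in H^{2/7}$, to put $f'(u)U$ in $H^{4/7}$, while your $q=42/31$ lands it directly in $H^{2/7}$ --- an immaterial difference since the coupling term forces $\beta=2/7$ anyway), and then run the energy estimate on $\tilde W=A^{\beta/2}W$ as in Lemma \ref{w_regularity}. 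Your explicit $T$-dependent Gronwall bound for $(U,U_t)$ is in fact a slightly more careful justification of the step the paper states as a uniform-in-$t$ bound.
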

    \begin{proof}
      As in the proof of Lemma \ref{w_regularity}, it suffices to show the higher regularity of $f'(u)U$ and $[\|\nabla u\|^{p-2}(\nabla u,\nabla U)+\|u_t\|^{p-2}(u_t,U_t)]u_t$. Note that $(u, u_t)$ is uniformly bounded in $H^{1+\frac{2}{7}}(\Omega)\times H^{\frac{2}{7}}(\Omega)$ by Theorem \ref{attractor_reg}. For $\beta_1=\frac{4}{7}$ and $q=\frac{14}{9}$
      \begin{align*}
        \|f'(u) U\|_{H^{\beta_1}}\leq C\|f'(u) U\|_{W^{1, q}}\leq \|f''(u)\nabla u U\|_{L^q}+\|f'(u)\nabla U\|_{L^q}\leq C\|\nabla U\|,
      \end{align*}
   and for $\beta_2=\frac{2}{7}$
    \begin{align*}
        \|[\|\nabla u\|^{p-2}(\nabla u,\nabla U) + \|u_t\|^{p-2}(u_t,U_t)]u_t\|_{H^{\beta_2}} \leq C(\|\nabla U\|+\|U_t\|).
    \end{align*}
Therefore, for $\beta=\frac{2}{7}$ it holds uniformly in $t$ that
\begin{align*}
  \|f'(u)U\|_{H^\beta}+\|[\|\nabla u\|^{p-2}(\nabla u,\nabla U)+\|u_t\|^{p-2}(u_t,U_t)]u_t\|_{H^\beta}\leq C(\|\nabla\xi\|+\|\zeta\|)\leq C, 
\end{align*}
if $\|(\xi, \zeta)\|_{\VV\times\HH}\leq 1$. Following the similar lines as in the proof of Lemma \ref{w_regularity}, we complete the proof. 
\end{proof}

Now we are in position to show the finite dimensionality of the global attractor. 
    \begin{prop}[\cite{MR3408002}]\label{dim_chueshov}
        Let $M$ be a compact set in a Banach space $X$ and $V:M\mapsto X$ be uniformly quasi-differentiable on $M$. Assume that the quasi-derivative $L(u)$ can be split into two linear parts
        \begin{equation*}
            L(u) = L^1(u) + L^2(u), \quad u\in M,
        \end{equation*}
        where
        \begin{equation*}
            \sup_{u\in M} \|L^1(u)\| \equiv q<1
        \end{equation*}
        and $L^2(u)$ is a compact operator on $X$ for each $u\in M$. We also assume that the function $u\mapsto L^2(u)$ is continuous in the operator norm. If $M\subset VM$, then $d_B(M)$ is finite.
    \end{prop}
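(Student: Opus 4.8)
The plan is to run a covering argument: show that $V$ carries each small ball (intersected with $M$) into a fixed finite number of balls of proportionally smaller radius, and then iterate this contraction using the negative invariance $M\subset VM$ together with Lemma \ref{box_dim}. First I would record the analytic content of the hypotheses. Uniform quasi-differentiability furnishes a modulus $\gamma(s)\to 0$ as $s\to 0^+$ with
\[
\|V(v)-V(u)-L(u)(v-u)\|\leq \gamma(\|v-u\|)\,\|v-u\|\qquad\text{for all }u,v\in M,
\]
while the contraction part satisfies $\|L^1(u)\|\leq q<1$ uniformly on $M$.

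The crucial preliminary step concerns the compact part. I would prove that the set
\[
K=\bigcup_{u\in M}L^2(u)\big(\overline{B_X(0,1)}\big)
\]
is relatively compact in $X$. This follows from the compactness of each $L^2(u)$ together with the norm-continuity of $u\mapsto L^2(u)$ on the compact set $M$: covering the family $\{L^2(u)\}_{u\in M}$, relatively compact in $\mathcal{K}(X)$, by finitely many operator-norm balls centered at $L^2(u_1),\dots,L^2(u_k)$, each $L^2(u_j)\big(\overline{B_X(0,1)}\big)$ is compact, and every $L^2(u)\big(\overline{B_X(0,1)}\big)$ lies in a small neighbourhood of one of them, so $K$ is totally bounded. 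Consequently, for every $\eta>0$ there is an integer $n=n(\eta)$ such that $K$, and hence by scaling $\bigcup_{u\in M}L^2(u)\big(\overline{B_X(0,r)}\big)$, is covered by $n$ balls of radius $\eta r$ for every $r>0$.

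Next I would establish the one-step covering estimate. Fix $x\in M$ and $r>0$, and take any $y\in B(x,r)\cap M$. Writing
\[
V(y)-V(x)=L^1(x)(y-x)+L^2(x)(y-x)+R(x,y),
\]
I bound $\|L^1(x)(y-x)\|\leq qr$, observe that $L^2(x)(y-x)$ lies in one of the $n$ balls of radius $\eta r$ covering $\bigcup_{u\in M}L^2(u)\big(\overline{B_X(0,r)}\big)$, and control the remainder by $\|R(x,y)\|\leq \gamma(r)\,r$. Hence $V\big(B(x,r)\cap M\big)$ is contained in a union of $n$ balls of radius $(q+\eta+\gamma(r))r$. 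Choosing $\eta$ small and then $r\leq r_0$ small enough that $\theta:=q+\eta+\gamma(r_0)<1$, and noting that $n$ is independent of $x$ and of $r\leq r_0$, I obtain for every such $r$ a covering of each image piece by $n$ balls of radius $\theta r$.

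Finally I would iterate. Covering $M$ by $N(M,r)$ balls $B(x_i,r)$ with $x_i\in M$ and using the negative invariance $M\subset VM\subset\bigcup_i V\big(B(x_i,r)\cap M\big)$, the previous step yields $N(M,\theta r)\leq n\,N(M,r)$ for all $r\leq r_0$. Setting $\epsilon_k=\theta^k r_0$, so that $\epsilon_{k+1}=\theta\epsilon_k$, gives $N(M,\epsilon_k)\leq n^k N(M,r_0)$, and Lemma \ref{box_dim} then bounds
\[
d_B(M)\leq \limsup_{k\to\infty}\frac{k\ln n+\ln N(M,r_0)}{-k\ln\theta-\ln r_0}=\frac{\ln n}{\ln(1/\theta)}<\infty.
\]
The main obstacle is the uniform compactness step producing a single $n$ valid for all centers $x\in M$ and all small $r$: both the passage from pointwise compactness of $L^2(u)$ to relative compactness of the union $K$ (using norm-continuity on the compact $M$) and the uniformity of the quasi-differentiability modulus $\gamma$ over $M$ are precisely what make the one-step constant $n$ and the contraction factor $\theta$ independent of the ball, which is exactly what the iteration requires.
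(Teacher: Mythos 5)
The paper does not prove this proposition at all (it is quoted verbatim from \cite{MR3408002}), so your argument can only be judged on its own merits, and it contains one genuine gap, located exactly at the step ``the previous step yields $N(M,\theta r)\leq n\,N(M,r)$''. The $n$ balls covering $V(B(x_i,r)\cap M)$ produced by your one-step estimate are centred at the points $V(x_i)+z_j$, which in general lie neither in $M$ nor near it; but $N(\cdot,\cdot)$ as defined in the paper counts balls with centres \emph{in} $M$, and, more importantly, the next round of your iteration must linearize at the centres of the current covering, which is only possible at points of $M$ (quasi-differentiability is available nowhere else). Re-centring a ball of radius $\theta r$ that meets $M$ at a point of $M$ forces you to double its radius, so what actually follows from your one-step estimate is $N(M,2\theta r)\leq n\,N(M,r)$. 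Since $\theta=q+\eta+\gamma(r_0)>q$ and the hypothesis only gives $q<1$, the factor $2\theta$ need not be below $1$ (any $q\geq 1/2$ already defeats it), the radii $(2\theta)^k r_0$ then fail to shrink, and the iteration proves nothing. The same factor of $2$ reappears on the input side if you instead run the scheme with arbitrary-centred covering numbers, because a covering ball whose centre is off $M$ must be replaced by one centred at a point of $M$ before uniform quasi-differentiability can be applied. So your proof, as written, establishes the proposition only under the stronger hypothesis $q<1/2$.

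The standard repair, and the reason the theorem holds for every $q<1$, is to run your argument with coverings by \emph{sets of small diameter} instead of balls with prescribed centres, since diameters are centre-free. If $S\subset M$ has diameter $\leq d$ and $x\in S$, partition $S$ according to which of the $n$ balls of radius $\eta d$ covering $\bigcup_{u\in M}L^2(u)\big(\overline{B_X(0,d)}\big)$ the point $L^2(x)(y-x)$ falls into: this gives $S=\cup_{j=1}^n S_j$ with $\mathrm{diam}\,V(S_j)\leq (q+2\eta+2\gamma(d))d$, no centres appearing anywhere. Intersecting the images with $M$ and using $M\subset VM$ yields $N_{\theta d}(M)\leq n\,N_{d}(M)$ for the diameter-based covering numbers, with $\theta=q+2\eta+2\gamma(d)<1$, whence $d_B(M)\leq \ln n/\ln(1/\theta)<\infty$. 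Everything else in your proposal --- the total boundedness of $\bigcup_{u\in M}L^2(u)\big(\overline{B_X(0,1)}\big)$ via norm-continuity of $u\mapsto L^2(u)$ on the compact $M$, the uniform modulus $\gamma$, the scaling, and the final appeal to a discrete-scale computation of $d_B$ --- is correct and is exactly the right preparation; only the ball-centred bookkeeping must be replaced by this centre-free bookkeeping.
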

    \begin{thm}\label{dim_nondegenerate}
        Suppose Assumptions \ref{f_assum1} and \ref{assum_2} hold. There exists $\epsilon_0>0$ such that for any $\epsilon\in(0,\epsilon_0)$, $\mathscr{A}\backslash B(0,\epsilon)$ has finite fractal dimension in $\VV\times\HH$.
    \end{thm}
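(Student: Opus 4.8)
The plan is to apply Proposition \ref{dim_chueshov} with the map $V=S(T)$ for a single large $T$, not to $\mathscr{A}\backslash B(0,\epsilon)$ itself but to a compact negatively invariant set $M$ that contains it and stays bounded away from the origin. On such an $M$ the quasi-derivative of $V$ at $\omega_0$ is the linear operator $L(T;\omega_0)\colon(\xi,\zeta)\mapsto(U(T),U_t(T))$ coming from the Fr\'echet differentiability of $S(t)$ established above; since the differentiability estimate is uniform over the bounded set $\mathscr{A}$ and $\omega_0\mapsto L(T;\omega_0)$ is continuous, $V$ is uniformly quasi-differentiable on the compact set $M$. The heart of the argument is to split $L(T;\omega_0)=L^1(T;\omega_0)+L^2(T;\omega_0)$ using the decomposition $U=V+W$ of \eqref{V_eq}--\eqref{W_eq}, with $L^1(\xi,\zeta)=(V(T),V_t(T))$ and $L^2(\xi,\zeta)=(W(T),W_t(T))$.

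The construction of $M$ is where the degeneracy must be confronted, and it is the step I expect to be the main obstacle. The metric set $\mathscr{A}\backslash B(0,\epsilon)$ is not negatively invariant, because the flow near $0$ can let the norm grow transiently by the factor $\sqrt{l_2/l_1}$ of \eqref{E_equiv}, so a point of $\mathscr{A}\backslash B(0,\epsilon)$ need not be the $S(T)$-image of another such point. I would instead take $M$ to be the closure of the union of all backward orbits on $\mathscr{A}$ passing through $\mathscr{A}\backslash B(0,\epsilon)$ (each point of the attractor admits a complete orbit, and I retain its part at non-positive times); this set is compact, contains $\mathscr{A}\backslash B(0,\epsilon)$, and satisfies $M\subseteq S(T)M$ by construction. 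The crucial point is that $M$ is bounded away from $0$: if a point $w$ on such a backward orbit had $\|w\|_X<\epsilon':=\sqrt{l_1/l_2}\,\epsilon$, then, by monotonicity of the energy and \eqref{E_equiv}, its forward orbit would stay near $0$ and obey $\|S(s)w\|_X\leq\sqrt{l_2/l_1}\,\|w\|_X<\epsilon$, contradicting that it reaches a point of $\mathscr{A}\backslash B(0,\epsilon)$. Thus $M\subseteq\mathscr{A}\backslash B(0,\epsilon')$, which is precisely the quantitative manifestation of the attracting (hence contracting) behaviour of the degenerate point: trajectories cannot reach distance $\epsilon$ from $0$ unless they were already at distance at least $\epsilon'$ backward in time.

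With $M$ in hand the remaining two ingredients are supplied by the earlier lemmas. For the contractive part, Lemma \ref{V_decay} applies with threshold $\epsilon'$ — its proof gives $I_V(T)\leq q\,I_V(0)$ with $q=q(\epsilon')<1$ for all $T\geq T_0(\epsilon')$, and since $q(\theta)\to e^{-C/2}<1$ as $\theta\to0^+$ every $\epsilon\in(0,\epsilon_0)$ is admissible — so that, fixing $T\geq T_0$, $\|L^1(T;\omega_0)\|_{X\to X}\leq\sqrt{q}<1$ uniformly for $\omega_0\in M$. For the compact part, Lemma \ref{W_regularity} shows $(W(T),W_t(T))$ is bounded in $H^{1+\beta}(\Omega)\times H^{\beta}(\Omega)$ uniformly in $\omega_0\in M$ and in $\|(\xi,\zeta)\|_X\leq1$; since this space embeds compactly into $X=\VV\times\HH$, each $L^2(T;\omega_0)$ is compact, and its continuity in the operator norm follows from that of $L(T;\cdot)$ together with the splitting.

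Proposition \ref{dim_chueshov} then yields $d_B(M)<\infty$, and the monotonicity of fractal dimension under inclusion gives $d_B(\mathscr{A}\backslash B(0,\epsilon))\leq d_B(M)<\infty$, as required. The routine verifications — existence and compactness of backward orbits on the attractor, closedness and negative invariance of their union, uniformity of the differentiability remainder, and operator-norm continuity of $L^2(T;\cdot)$ — I would dispatch using standard attractor theory together with the Lipschitz estimate \eqref{a_es_Lip}; the only genuinely new point is the bounded-away-from-$0$ property of $M$, whose proof rests on the forward decay near the origin encoded in \eqref{E_equiv} and the monotonicity of $E$.
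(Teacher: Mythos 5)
Your proof is correct, and its core is identical to the paper's: the splitting $L(T;\omega_0)=L^1+L^2$ via \eqref{V_eq}--\eqref{W_eq}, the uniform contraction $\|L^1(T;\omega_0)\|\le\sqrt{q}<1$ from Lemma \ref{V_decay} (applied with a threshold of order $\epsilon$, which is legitimate since $q\to e^{-C/2}<1$ as the threshold shrinks), the compactness of $L^2$ from Lemma \ref{W_regularity} via the compact embedding $H^{1+\beta}\times H^{\beta}\hookrightarrow \VV\times\HH$, and finally Proposition \ref{dim_chueshov}. The genuine divergence is your choice of the set $M$, and here your criticism of the direct choice is actually unfounded. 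The paper applies Proposition \ref{dim_chueshov} to $M=\mathscr{A}\backslash B(0,\epsilon)$ itself, asserting negative invariance for $t$ large depending on $\epsilon$, and this is true: the transient growth by the factor $\sqrt{l_2/l_1}$ that worries you only obstructs negative invariance for \emph{small} $T$. Indeed, if $w=S(T)w_0$ with $w_0\in\mathscr{A}$ and $\|w_0\|_X<\epsilon<r_0$, the upper bound in Lemma \ref{u_decay_equiv} gives $\|S(T)w_0\|_X\le C_2\big(T+k_2\epsilon^{-p}\big)^{-1/p}<\epsilon$ once $T>(C_2^p-k_2)\epsilon^{-p}$, contradicting $\|w\|_X\ge\epsilon$; hence for such $T$ every $\mathscr{A}$-preimage of a point of $\mathscr{A}\backslash B(0,\epsilon)$ lies again in $\mathscr{A}\backslash B(0,\epsilon)$, and since $T$ must anyway be taken $\ge T_0$ for the contraction, nothing is lost. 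Your alternative --- the closed backward-orbit hull of $\mathscr{A}\backslash B(0,\epsilon)$, shown to lie in $\mathscr{A}\backslash B(0,\epsilon')$ with $\epsilon'=\sqrt{l_1/l_2}\,\epsilon$ by energy monotonicity and \eqref{E_equiv}, followed by monotonicity of $d_B$ under inclusion --- is a valid workaround: it buys negative invariance for \emph{every} $T>0$ and makes the ``bounded away from $0$'' property explicit, at the cost of an extra construction and of invoking existence of complete orbits through points of the attractor. The paper's route is shorter precisely because the quantitative decay estimate it already possesses makes the original set negatively invariant at the (large) times one needs anyway.
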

    \begin{proof}
    We have already known that $S(t)\omega_0$ decays uniformly near $0$ by Lemma \ref{u_decay_equiv}, which also indicates that $S(t)$ is negatively invariant on $\mathscr{A}\backslash B(0,\epsilon)$ for small $\epsilon$ and large $t$ (depending on $\epsilon$). Due to the analysis above, we have the decomposition
    \begin{equation*}
        \begin{aligned}
            L(t;\omega_0)(\xi,\zeta) = (U(t),U_t(t)) 
            &= (V(t),V_t(t)) + (W(t),W_t(t)) \\
            &= L_1(t;\omega_0)(\xi,\zeta) + L_2(t;\omega_0)(\xi,\zeta),
        \end{aligned}
      \end{equation*}
      where $\|L_1(t;\omega_0)\|<1$ for $t> T_0$ by Lemma \ref{V_decay}, and $L_2(t;\omega_0)$ is compact in $\VV\times\HH$ for each $t>0$ by Lemma \ref{W_regularity}. Moreover, it is easy to verify that $L^2(t;\omega_0)$ is continuous in $\omega_0$. By virtue of Proposition \ref{dim_chueshov} we complete the proof by choosing $t>T_0$ large enough. 
  \end{proof}

  \begin{proof}[Proof of Theorem \ref{sec_introduction_thm_main}]
      Note that $0\in \mathscr{A}$. According to the proof of Lemma \ref{u_decay_equiv}, it holds in $\mathscr{A}\cap B(0, r_0)$ that
      \begin{equation*}
        \frac{d}{dt}\left[E(t) + \frac{\epsilon}{2}E(t)^{\frac{p}{2}}(u_t,u)\right] + C\left[E(t) + \frac{\epsilon}{2}E(t)^{\frac{p}{2}}(u_t,u)\right]^{\frac{p}{2}+1} \leq 0,
      \end{equation*}
      and $E(t) + \frac{\epsilon}{2}E(t)^{\frac{p}{2}}(u_t,u)$ is equivalent to $I_u$ in $\mathscr{A}\cap B(0, r_0)$. 
Combining these facts with Corollary \ref{Holder_t}, Lemma \ref{Lipschitz_u} as well as Theorem \ref{dim_nondegenerate}, we complete the proof by applying Theorem \ref{main_2}. 
  \end{proof}
  
\section*{Acknowledgements}
This work is supported by the National Natural Science Foundation of China (11731005, 12201604, 12371106) and the Fundamental Research Funds for the Central Universities (E3E40102X2). 
  
\section*{Data Availability Statement}
Data sharing is not applicable to this article as no new data were created or analyzed in this study. 

\bibliographystyle{plain}
\bibliography{reference.bib}

\vspace{0.5cm}

\noindent Zhijun Tang\\
Department of Mathematics, Nanjing University, Nanjing, 210093, China\\
E-mail: tzj960629@163.com
\vspace{0.3cm}

\noindent Senlin Yan\\
Department of Mathematics, Nanjing University, Nanjing, 210093, China\\
E-mail: dg20210019@smail.nju.edu.cn
\vspace{0.3cm}

\noindent Yao Xu\\
School of Mathematical Sciences, University of Chinese Academy of Sciences, Beijing, 100049, China\\
E-mail: xuyao89@gmail.com, xuyao@ucas.ac.cn
\vspace{0.3cm}

\noindent Chengkui Zhong\\
Department of Mathematics, Nanjing University, Nanjing, 210093, China\\
E-mail: ckzhong@nju.edu.cn

\end{document}